\numberwithin{equation}{section}
\newtheorem{definition}{Definition}[section]
\newtheorem{theorem}{Theorem}[section]
\newtheorem{lemma}{Lemma}[section]
\newtheorem{corollary}{Corollary}[section]
\theoremstyle{remark}
\newtheorem{remark}{Remark}[section]
\DeclareMathOperator{\diam}{\mathrm{diam}}
\DeclareMathOperator{\riem}{\mathrm{Rm}}
\DeclareMathOperator{\ric}{\mathrm{Ric}}
\DeclareMathOperator{\hess}{\mathrm{Hess}}
\DeclareMathOperator{\vol}{\mathrm{Vol}}
\DeclareMathOperator{\tr}{\mathrm{tr}}
\newcommand{\de}{\,\mathrm{d}}
\newcommand{\R}{\mathbb{R}}
\newcommand{\dist}{\mathrm{dist}}
\def\uclhome{@ucl.ac.uk}
\author{Panagiotis Gianniotis}
\thanks{}
\address{Panagiotis Gianniotis: Department of Mathematics, University of Toronto, 40 St George Street, Toronto, ON M5S 2E4, Canada}
\curraddr{}
\email{p.gianniotis@utoronto.ca}
\author{Felix Schulze}
\thanks{}
\address{Felix Schulze: 
  Department of Mathematics, University College London, 25 Gordon St,
  London WC1E 6BT, UK}
\curraddr{}
\email{f.schulze\uclhome}
\subjclass[2000]{}
\dedicatory{}
\keywords{}
\begin{document}
\title[Ricci flow from spaces with isolated conical singularities]{Ricci flow from spaces with isolated conical singularities}
\maketitle
\begin{abstract} Let $(M,g_0)$ be a compact $n$-dimensional Riemannian manifold with a finite number of singular points, where the metric is asymptotic to a non-negatively curved cone over $(\mathbb{S}^{n-1},g)$. We show that there exists a smooth Ricci flow starting from such a metric with curvature decaying like C/t. The initial metric is attained in Gromov-Hausdorff distance and smoothly away from the singular points. In the case that the initial manifold has isolated singularities asymptotic to a non-negatively curved cone over $(\mathbb{S}^{n-1}/\Gamma,g)$, where $\Gamma$ acts freely and properly discontinuously, we extend the above result by showing that starting from such an initial condition there exists a smooth Ricci flow with isolated orbifold singularities. 
\end{abstract}
\section{Introduction}
Consider a smooth solution $(M,g(t))_{t\in [0,T)}$ to the Ricci flow
$$\frac{\partial}{\partial t} g = -2 \ric(g)\, ,$$
starting from a closed Riemannian manifold $(M,g(0))$. Hamilton has shown in \cite{Hamilton82} that the existence time $T$ of the unique maximal solution is bounded from below by $C/K$, where $C=C(n)>0$ and $K=\sup_{M}|\riem(g(0))|$.  

It is a natural question to ask which non-smooth spaces can arise as initial data for smooth solutions to the Ricci flow. In \cite{Simon09, Simon12, Simon14}, Simon shows that one can construct a smooth Ricci flow starting from a space that can be approximated by a sequence of smooth 3-dimensional manifolds that  is  locally uniformly non-collapsed and has curvature operator locally uniformly  bounded from below. This result has been applied by Lebedeva--Matveev--Petrunin--Shevchishin \cite{LMPS15} to show that 3-dimensional polyhedral manifolds with nonnegative curvature in the sense of Alexandrov can be approximated by non-negatively curved 3-dimensional Riemannian manifolds. Koch--Lamm \cite{KochLamm12} show that from any initial metric, which is a small $L^\infty$-perturbation of the standard Euclidean metric, there exists a smooth solution to Ricci--DeTurck flow. This was extended in \cite{KochLamm15} to small $L^\infty$-perturbations of a $C^2$ background metric on a uniform $C^3$ manifold. We note that small $L^\infty$-perturbations allow for conical singularities where the cones are sufficiently close to Euclidean space. 

Much more is known in dimension two. The results of Simon are still valid, and the work of Giesen--Topping and Topping \cite{GiesenTopping11,Topping15} implies that given any initial data, even incomplete with unbounded curvature, there exists a smooth Ricci flow that becomes complete for $t>0$,  which is unique in an appropriate class. Moreover,  Yin \cite{Yin10,Yin13} and Mazzeo--Rubinstein--Sesum \cite{MazzeoRubinsteinSesum15} consider two dimensional Ricci flows that preserve the conical singularity. For a generalisation to higher dimensions of Ricci flows that preserve a certain class of singularities see the work of Vertman \cite{Vertman16}. In the case of K\"ahler-Ricci flow also more is known. Short-time existence from non-smooth initial data was studied by Guedj--Zeriahi \cite{GuedjZeriahi13}, Di Nezza--Lu \cite{DiNezzaLu17} and Song--Tian \cite{SongTian09}, where the last article also treats the evolution through singularities. Preserving conical singularities in the K\"ahler case was considered by Chen--Wang \cite{ChenWang15}.

In this paper we consider smooth Ricci flows that start from compact smooth initial spaces $(Z,g_Z)$ with isolated conical singularities. Such spaces can be expected to arise as the limiting space of a smooth Ricci flow $(N,h(t))_{t \in [0,T)}$ as $t \rightarrow T$, as the following heuristic argument describes. Assume that at $(p,T)$ the flow has a type I singularity. By work of Naber \cite{Naber10}, Enders--M\"uller--Topping \cite{EndersMuellerTopping11} and Mantegazza--M\"uller \cite{MantegazzaMueller15} it is known that any parabolic blow-up of the flow around $(p,T)$ converges to a smooth, shrinking, non-trivial, gradient soliton solution. Furthermore, if one assumes that this soliton is non-compact and the Ricci curvature goes to zero at infinity, then it is known by work of Munteanu--Wang \cite{MunteanuWang14} that the gradient shrinking soliton is smoothly asymptotic to a cone over a compact Riemannian manifold. Assuming further that such a tangent flow is unique, i.e.~does not depend on the sequence of rescalings chosen, it should be possible to show that $(N,h(t))$ converges to a smooth space $(Z,g_Z)$ with an isolated conical singularity. We would then like to continue the flow so that it immediately becomes smooth after time $T$. For an example of such a behaviour on the level of soliton solutions, see the work of Feldman--Ilmanen--Knopf \cite{FeldmanIlmanenKnopf03}. We note furthermore that such a picture of a smooth limiting space with isolated conical singularities can be made precise for mean curvature flow.

We define a compact Riemannian manifold with isolated conical singularities as follows.

\begin{definition}\label{conical_sing}
We say that $(Z,g_Z)$ is a compact space with isolated conical singularities at $\{z_i \}_{i=1}^Q \subset Z$  modelled on the cones 
$$(C(X_i), g_{c,i}=dr^2+r^2 g_{X_i}),$$
 where $(X_i,g_{X_i})$ are smooth compact Riemannian manifolds, if:  
\begin{enumerate}
\item $(Z\setminus\{z_1,\ldots,z_Q\}, g_Z)$ is a smooth Riemannian manifold.
\item The metric completion of  $(Z\setminus\{z_1,\ldots,z_Q\}, g_Z)$ is a compact metric space $(Z,d_Z)$.
\item There exist maps $\phi_i : (0,r_0] \times X_i \rightarrow Z\setminus \{z_1,\ldots,z_Q\}$, $i=1,\ldots,Q$, diffeomorphisms onto their image, such that $\lim_{r \rightarrow 0} \phi_i(r,p) = z_i$ for any $p \in X_i$ and
\begin{align}
\sum_{j=0}^4  r^j |(\nabla^{g_{c,i}})^j (\phi_i^* g_Z - g_{c,i})|_{g_{c,i}} &<k_Z(r), \label{5.conical}
\end{align}
for some function $k_Z:(0,r_0]\rightarrow \mathbb R^+$ with $\lim_{r\rightarrow 0} k_Z(r) = 0$.
\end{enumerate}
\end{definition}

We prove the following short-time existence result.

\begin{theorem}\label{stexist}
Let $(Z,g_Z)$ be a compact Riemannian manifold with isolated conical singularities at $\{z_i\}_{i=1}^Q\subset Z$, each modelled on a cone
$$(C(\mathbb{S}^{n-1}), g_{c,i}= dr^2+r^2 g_i)$$  with $\riem(g_i)\geq1$, but $\riem(g_i)\not\equiv 1$.

Then, there exists a smooth manifold $M$, a smooth Ricci flow $(g(t))_{t\in (0,T]}$ on $M$  and a constant $C_{\riem}$ with the following properties. 
\begin{enumerate}
\item  $(M,d_{g(t)}) \rightarrow (Z,d_Z)$ as $t\rightarrow 0$, in the Gromov--Hausdorff topology.
\item There exists a map $\Psi: Z\setminus \{z_1,\ldots,z_Q\} \rightarrow M$, diffeomorphism onto its image, such that
$\Psi^* g(t)$ converges to $g_Z$, smoothly uniformly away from $z_i$, as $t\rightarrow 0$.
\item $\max_M |\riem(g(t))|_{g(t)}\leq C_{\riem}/t$  for $t\in (0,T]$.
\item Let $t_k\searrow 0$ and $p_k \in (\mathrm{Im}\,\Psi )^c\subset (M,d_{g(t_k)})$. Suppose that $p_k\rightarrow z_i$ under the Gromov--Hausdorff convergence, as $k \rightarrow \infty$. Then 
\begin{equation*}
(M,t_k^{-1}g(t_k t),p_k)_{t\in (0,t_k^{-1}T]}\rightarrow (N_i,g_{e,i}(t),q)_{t\in (0,+\infty)},
\end{equation*}
where $(N_i,g_{e,i}(t))_{t\in (0,+\infty)}$ is the Ricci flow induced by the unique expander $(N_i,g_{N_i},f_i)$ with positive curvature operator that is asymptotic to the cone $(C(\mathbb S^{n-1}),g_{c,i})$.
\end{enumerate}
\end{theorem}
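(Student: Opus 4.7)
The plan is to construct $(M,g(t))$ as a smooth limit of Ricci flows $(M_k, g_k(t))$ whose initial data approximate $(Z,g_Z)$ by resolving each conical singularity with a rescaled expander. Under the hypothesis $\riem(g_i)\ge 1$ with $\riem(g_i)\not\equiv 1$, a theorem of Deruelle provides a unique expanding gradient Ricci soliton $(N_i, g_{N_i}, f_i)$ with positive curvature operator that is smoothly asymptotic to the cone $(C(\mathbb S^{n-1}), g_{c,i})$, together with its associated self-similar flow $g_{e,i}(t)$. I form an approximating initial metric $g_{Z,k}$ on a smooth manifold $M_k$ by excising the cone tip $\phi_i(\{r < r_k\})$ around each $z_i$ and gluing in a truncated copy of $g_{e,i}(\epsilon_k)$, interpolating in an annular collar where both metrics are close to $g_{c,i}$. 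For appropriate $\epsilon_k, r_k \searrow 0$ the initial data $(M_k, g_{Z,k})$ converge to $(Z, d_Z)$ in Gromov--Hausdorff distance and smoothly away from $\{z_i\}$.

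Short-time existence yields a smooth Ricci flow $g_k(t)$ on $M_k$ starting from $g_{Z,k}$, and the crux of the argument, which I expect to be the principal obstacle, is to prove a uniform curvature bound
\begin{equation*}
|\riem(g_k(t))|_{g_k(t)} \le \frac{C_{\riem}}{t} \qquad \text{on } (0,T]
\end{equation*}
with $T$ and $C_{\riem}$ independent of $k$. I would argue by contradiction: if the estimate fails, choose $(p_k, t_k)$ realising the worst violation, parabolically rescale by the curvature at that point, and extract a pointed Cheeger--Gromov limit via Hamilton's compactness theorem. Uniform $\kappa$-non-collapsing is needed and should follow from Perelman's monotonicity of $\mathcal W$, using that the model expanders $g_{N_i}$ are non-collapsed at all scales and that $g_Z$ is smooth on $Z\setminus\{z_i\}$. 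The limiting flow is either flat Euclidean space (ruled out since the rescaled curvature equals $1$ at the basepoint) or a smooth immortal Ricci flow with $|\riem|$ bounded at each time whose initial Gromov--Hausdorff limit, after an additional rescaling, is one of the cones $(C(\mathbb S^{n-1}), g_{c,i})$. Deruelle's uniqueness of positively-curved expanders with a prescribed asymptotic cone forces the limit to be $g_{e,i}(t)$ itself, whose curvature already obeys $|\riem|\le C/t$, contradicting the assumed violation provided $T$ is chosen small.

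With the uniform $C_{\riem}/t$ bound and $\kappa$-non-collapsing in place, Hamilton compactness produces a smooth limiting Ricci flow $(M, g(t))_{t\in(0,T]}$ after passing to a subsequence; property (3) is immediate. Shi's derivative estimates together with the smooth convergence $g_{Z,k} \to g_Z$ on compact subsets of $Z\setminus\{z_i\}$ yield a diffeomorphism $\Psi: Z\setminus\{z_i\} \to M$ with $\Psi^* g(t) \to g_Z$ smoothly off the singular set, proving (2). For (1), Hamilton's distance-distortion estimate $|\partial_t d_{g(t)}| \le C t^{-1/2}$ coming from $|\riem|\le C/t$ integrates to a Gromov--Hausdorff error of order $\sqrt{t}$, so combined with the Gromov--Hausdorff convergence of the approximants one obtains $(M, d_{g(t)}) \to (Z, d_Z)$. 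Finally, for (4), given basepoints $p_k \to z_i$ with $t_k \searrow 0$, the parabolically rescaled flows $(M, t_k^{-1} g(t_k t), p_k)$ satisfy the same curvature estimate on $(0, t_k^{-1} T]$, remain non-collapsed, and have initial Gromov--Hausdorff limit equal to the cone $(C(\mathbb S^{n-1}), g_{c,i})$ by definition of the conical singularity. A further application of Hamilton compactness together with Deruelle's uniqueness identifies the limit flow with $(N_i, g_{e,i}(t))$, completing the proof.
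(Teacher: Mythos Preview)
Your overall strategy---desingularise by gluing in Deruelle's expanders at small scale, run Ricci flow on the smooth approximants, prove a uniform $C/t$ curvature bound, and pass to a limit---matches the paper's. The divergence is in how you obtain the uniform curvature bound, and there your argument has a genuine gap.

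You propose to argue by contradiction: pick points where $t\,|\riem|$ is large, blow up, extract a limit, and invoke ``Deruelle's uniqueness'' to identify the limit with the expander flow $g_{e,i}$. But Deruelle's theorem asserts uniqueness \emph{among expanding gradient solitons} with non-negative curvature operator asymptotic to the given cone; it says nothing about uniqueness of an arbitrary Ricci flow whose initial data is that cone. Your blow-up limit is a priori just some complete Ricci flow---you have no mechanism forcing it to be self-similar, and you do not even know it has non-negative curvature operator, since the glued approximants $g_{Z,k}$ carry no uniform lower curvature bound in the transition annulus. So the identification step fails, and with it the contradiction. The same gap recurs verbatim in your argument for part~(4).

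The paper avoids this entirely. Rather than a blow-up/uniqueness argument, it proves the $C/t$ bound \emph{directly} by passing to Ricci--DeTurck gauge relative to a time-dependent background that interpolates between the initial metric and the expander flow $g_e(s+t)$. In the conical region Perelman's pseudolocality (plus a pseudolocality-type lemma for the associated harmonic map heat flow) controls the solution; in the expanding region a \emph{localised} version of the Deruelle--Lamm weak-stability theorem for expanders with positive curvature operator shows the DeTurck solution stays $\varepsilon$-close to the expander for all time, with constants independent of the gluing scale. A bootstrap closes the two regions against each other. This stability estimate is precisely the substitute for the flow-level uniqueness you are implicitly assuming, and it---not Deruelle's existence/uniqueness of the soliton itself---is where the positive-curvature hypothesis on the cone is actually consumed. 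The same closeness-to-expander estimate then drives both the diameter control needed for Gromov--Hausdorff convergence and the identification of the tangent flow in part~(4), without ever appealing to uniqueness of flows out of the cone.
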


To construct the solution, we desingularise the initial metric by glueing in expanding gradient solitons with positive curvature operator, each asymptotic to the cone at the singular point, at a small scale $s$. These expanding solitons exist due to a recent result of Deruelle \cite{Deruelle16}.  Localising a recent stability result of Deruelle--Lamm \cite{DeruelleLamm16} for such expanding solutions, we show that there exists a solution from the desingularised initial metric for a uniform time $T>0$, with corresponding estimates, independent of the glueing scale $s$. The solution is then obtained by letting $s \rightarrow 0$. 

The last point in the statement of the above theorem says that the limiting solution has the corresponding expanding gradient soliton as a forward tangent flow at each initial singular point. We further note that our construction doesn't require that the initial data or the constructed approximating sequence satisfy any lower bound on the curvature.  Moreover, aside from the existence of the expanding gradient solitons and the stability result of Deruelle--Lamm, the construction does not depend in any way on the non-negativity assumption on the curvature of the conical models.

In case that the isolated singularities are modelled on cones over a quotient of $(\mathbb{S}^{n-1},\bar{g})$ with $\riem(\bar{g})\geq 1$ we can show that there exists a smooth solution to the orbifold Ricci flow starting from such a space, with isolated orbifold points.  Each initial cone $(C(\mathbb{S}^{n-1}/\Gamma_i), dr^2+r^2 g_i)$, with $\Gamma_i$ non-trivial, corresponds to an isolated orbifold point in the flow.

\begin{theorem}\label{orbifold_thm}
Let $(Z,g_Z)$ be as in Theorem \ref{stexist}, with singularities at $\{z_i\}_{i=1}^Q$ modelled on cones $(C(\mathbb{S}^{n-1}/\Gamma_i), g_{c,i}:=dr^2+r^2 g_i)$ with $\riem(g_i)\geq1$,  $\riem(g_i)\not\equiv 1$, and $\Gamma_i$  acting freely and properly discontinuously.

Then there exists a smooth \emph{orbifold} Ricci flow $(M,g(t))_{t\in (0,T]}$ with isolated orbifold singularities, each modelled on $\mathbb R^n / \Gamma_i$, and a constant $C_{\riem}$ for which (1)-(3) of Theorem \ref{stexist}  hold. Moreover
\begin{itemize}
\item[($4'$)]  Let $t_k\searrow 0$ and $p_k \in (\mathrm{Im}\,\Psi )^c\subset (M,d_{g(t_k)})$. Suppose that $p_k\rightarrow z_i$ under the Gromov--Hausdorff convergence, as $k \rightarrow \infty$. Then 
\begin{equation*}
(M,t_k^{-1}g(t_k t),p_k)_{t\in (0,t_k^{-1}T]}\rightarrow (\mathcal O_i,g_{e,i}(t))_{t\in (0,+\infty)},
\end{equation*}
where $(\mathcal O_i,g_{e,i}(t))_{t\in (0,+\infty)}$ is the \emph{orbifold} Ricci flow induced by the unique \emph{orbifold quotient} expander $(\mathcal O_i,g_{\mathcal O_i},f_i)$ with positive curvature operator that is asymptotic to the cone $(C(X_i),g_{c,i})$.
\end{itemize}
\end{theorem}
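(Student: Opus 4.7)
The plan is to reduce the orbifold case to the smooth case of Theorem \ref{stexist} by working $\Gamma_i$-equivariantly on smooth local covers. First I would construct the orbifold expander appearing in ($4'$). Let $(N_i,g_{N_i},f_i)$ be the unique smooth expander with positive curvature operator asymptotic to $(C(\mathbb{S}^{n-1}),g_{c,i})$ provided by Deruelle. Since $\Gamma_i$ acts isometrically on $(\mathbb{S}^{n-1},g_i)$ and hence on the cone, and since $N_i$ is unique up to isometry among such expanders, the pullback $\gamma^* g_{N_i}$ must equal $g_{N_i}$ for every $\gamma\in\Gamma_i$; hence $\Gamma_i$ extends to an isometric action on $N_i$. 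Because this action is free on the cone end and $f_i$ has a single critical point, the fixed-point set reduces to the tip, so the quotient $\mathcal{O}_i := N_i/\Gamma_i$ is an orbifold with one isolated singularity of type $\mathbb{R}^n/\Gamma_i$, carrying an expanding orbifold soliton structure asymptotic to $(C(\mathbb{S}^{n-1}/\Gamma_i),g_{c,i})$.

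Next, I would rerun the desingularisation of Theorem \ref{stexist} verbatim with $(\mathcal{O}_i, s g_{\mathcal{O}_i})$ in place of the smooth expanders $(N_i, s g_{N_i})$. For small $s>0$, on an annular region around each $z_i$ both the initial metric $g_Z$ and the rescaled orbifold expander are $C^4$-close to the cone $g_{c,i}$ (by Definition \ref{conical_sing} and Deruelle's asymptotic expansion for $N_i$), so a cut-off interpolation performed in the cone coordinates produces a smooth orbifold metric $g_s(0)$ on a compact orbifold $M_s$ that agrees with $g_Z$ away from the singular points and with the rescaled orbifold expander near each of them.

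I would then set up the flow from $(M_s, g_s(0))$ as an orbifold Ricci flow, which near each orbifold point lifts to a $\Gamma_i$-equivariant smooth Ricci flow on a local cover, with equivariance preserved under the evolution. The localised Deruelle--Lamm stability estimates driving the smooth case of Theorem \ref{stexist} are $\Gamma_i$-equivariant in nature (the DeTurck gauge, weighted norms, and maximum principle arguments commute with the group action), so they apply to the lifts and descend to the orbifold, yielding a uniform existence time $T>0$ and a bound $|\riem(g_s(t))|_{g_s(t)}\leq C_{\riem}/t$ independent of $s$. Sending $s\to 0$ and extracting a convergent subsequence via an orbifold Hamilton compactness theorem produces the desired $(M,g(t))_{t\in(0,T]}$, whose orbifold singularities are exactly the images of the tips of the $\mathcal{O}_i$. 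Items (1)--(3) follow as in Theorem \ref{stexist}. For ($4'$), the tangent flow at $z_i$ is identified by rescaling and extracting a blow-up limit on the local smooth cover: any such limit is $\Gamma_i$-equivariant and has positive curvature operator, so by the uniqueness clause of Deruelle's theorem it must be $(N_i,g_{e,i}(t))$, which descends to $(\mathcal{O}_i,g_{e,i}(t))$ on the orbifold.

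The main obstacle I anticipate is verifying that the weighted analytic machinery supporting the localised Deruelle--Lamm stability result genuinely respects the orbifold structure: one must check that the weighted Schauder and energy estimates behave uniformly under the $\Gamma_i$-action and that the constants do not degenerate when the estimates are restricted to $\Gamma_i$-invariant tensors at each gluing scale. A secondary point is excluding additional orbifold singularities in the limit $s\to 0$, which should follow from the uniform curvature bound combined with a local non-collapsing estimate near each $z_i$, forcing the orbifold structure of the limit to match exactly that of the model expanders $\mathcal{O}_i$.
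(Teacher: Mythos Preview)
Your overall strategy matches the paper's: construct an orbifold quotient expander, glue it in at scale $s$, run orbifold Ricci flow, lift locally to a smooth cover to apply the estimates of Section~\ref{conical metrics}, and pass to the limit via orbifold compactness (Lu) and orbifold pseudolocality (Wang). The analytic concern you flag as the ``main obstacle'' is in fact handled trivially in the paper: since all the estimates of Sections~\ref{conical metrics}--\ref{flowing_sing} are local, one simply applies them to the lifted smooth flow on $B_1(0)\subset\mathbb R^n$ and then pushes down by $\Gamma_i$-invariance; no orbifold-specific weighted analysis is needed.

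The genuine gap is in your first paragraph. The sentence ``the pullback $\gamma^* g_{N_i}$ must equal $g_{N_i}$'' does not make sense as written, because $\gamma$ is a priori only a diffeomorphism of the link (or of the cone), not of $N_i$; you cannot pull $g_{N_i}$ back by it. What you presumably intend is: re-equipping $(N_i,g_{N_i},f_i)$ with the asymptotic chart $F\circ(\mathrm{id}\times\gamma)$ gives another positively curved expander asymptotic to the same cone, so by Deruelle's uniqueness there exists an isometry $\Phi_\gamma$ of $(N_i,g_{N_i})$ intertwining the two charts. But to obtain a \emph{group action} you must show $\Phi_{\gamma_1\gamma_2}=\Phi_{\gamma_1}\circ\Phi_{\gamma_2}$, which requires that $\Phi_\gamma$ is \emph{uniquely} determined by its asymptotic behaviour---equivalently, that $(N_i,g_{N_i})$ admits no nontrivial isometry which is asymptotic to the identity on the cone. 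You have not addressed this, and it is not an immediate consequence of Deruelle's existence/uniqueness statement. The paper does not attempt this direct route; instead it proves the extension of the $\Gamma_i$-action via an open--closed continuity argument (Theorem~\ref{thm:quotient_expander}) along Deruelle's one-parameter family of expanders connecting the given one to the rotationally symmetric Bryant expander, where the action is explicit. The openness step uses the \emph{local} uniqueness in Deruelle's implicit function theorem, and closedness uses orbifold compactness together with Lemma~\ref{quotient}. You should either carry out this continuity argument, or supply the missing rigidity statement (no isometries asymptotic to the identity) to make your uniqueness approach work.
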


The proof of Theorem \ref{orbifold_thm} is a direct modification of the proof of Theorem \ref{stexist}. We do this by showing that there exists a unique orbifold quotient expander $(\mathcal O_i,g_{\mathcal O_i},f_i)$ with positive curvature operator and one isolated orbifold point that is asymptotic to the cone $(C(X_i),g_{c,i})$: see Theorem \ref{thm:quotient_expander}. 

We can also allow for cones as models for the singularities which are not non-negatively curved, provided they are small perturbations of non-negatively curved cones considered in Theorem \ref{stexist}.

\begin{theorem}\label{perturbation_thm}
Let $(Z,g_Z)$ be as in Theorem \ref{stexist}, with singularities at $\{z_i\}_{i=1}^Q$ modelled on cones $(C(\mathbb S^{n-1}), g_{c,i}:=dr^2+r^2 g_i)$. Let $(N,g_{N_i},f_i)$ be expanders with positive curvature operator asymptotic to $(C(\mathbb S^{n-1}),g_{c,i}'=dr^2+r^2 g_i ')$ with $\riem(g_i ')\geq 1$, $ \riem(g_i')\not\equiv 1$. Then there exist $\varepsilon_i>0$ depending on $g_{N_i}$ such that if 
\begin{equation*}
|(\nabla^{g_i})^j (g_i '-g_i)|_{g_i} <\varepsilon_i\, ,
\end{equation*} 
where $0\leq j \leq 4$, then there exists a smooth Ricci flow $(M,g(t))_{t\in (0,T]}$ and $C_{\riem}$ for which (1)-(3) of Theorem \ref{stexist} hold.
\end{theorem}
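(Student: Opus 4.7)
The plan is to follow the proof of Theorem \ref{stexist} with one substitution: since an expander asymptotic to the (possibly not non-negatively curved) cone $g_{c,i}$ need not exist, I would instead desingularise using the given expander $g_{N_i}$, which is asymptotic to the nearby cone $g_{c,i}'$, and absorb the $C^4$-mismatch between $g_{c,i}$ and $g_{c,i}'$ into the stability estimate. The authors have already emphasised that, apart from the existence of such expanders and the Deruelle--Lamm stability result, their construction makes no use of the non-negativity of curvature on the conical models, so this substitution is all that is genuinely required.

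Concretely, at glueing scale $s>0$ and near each singular point $z_i$, I would excise the ball of radius $2s$ around $z_i$ and paste in the rescaled expander $(N, s^2 g_{N_i})$, interpolating between $\phi_i^* g_Z$ and $s^2 g_{N_i}$ in a transition annulus $\rho \in [s,2s]$ via a convex combination with a fixed cutoff. On the annulus, after rescaling to unit scale, $s^2 g_{N_i}$ is $C^4$-close to the cone $g_{c,i}'$ by the conical asymptotics of the expander, while $\phi_i^* g_Z$ is $C^4$-close to $g_{c,i}$ by \eqref{5.conical}. Since by hypothesis $|(\nabla^{g_i})^j(g_i' - g_i)|_{g_i} < \varepsilon_i$ for $0 \leq j \leq 4$, the two metrics are both $C^4$-close to the common cone $g_{c,i}'$, with total error bounded by $C(\varepsilon_i + k_Z(s))$. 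In particular, after the parabolic rescaling that identifies $s^2 g_{N_i}$ with $g_{N_i}$, the desingularised metric differs from $g_{N_i}$ in the weighted $C^4$-norm adapted to the expander by the same order.

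I would then apply the localised Deruelle--Lamm stability result centred on the background expander $g_{N_i}$, exactly as in the proof of Theorem \ref{stexist}, to obtain a uniform existence time $T>0$ and a curvature estimate $|\riem(g(t))|_{g(t)} \leq C_{\riem}/t$ independent of $s$, provided the initial perturbation lies inside the stability radius of $g_{N_i}$. That radius depends only on $g_{N_i}$, so choosing $\varepsilon_i$ a fixed small fraction of it and taking $s$ small suffices. Letting $s \to 0$ and extracting a smooth subsequential limit by the same compactness argument as in Theorem \ref{stexist} yields the desired flow, and properties (1)-(3) follow from the same estimates that established them in the unperturbed case.

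The main obstacle, and the only point requiring care rather than direct mimicry, is verifying that the stability threshold in the localised Deruelle--Lamm statement truly depends only on the fixed expander $g_{N_i}$, and in particular is not sensitive to which cone the initial data is asymptotic to at infinity after the glueing. Equivalently, one must check that the hypotheses of the localised stability theorem are phrased in terms of a purely local weighted $C^4$-smallness relative to $g_{N_i}$ — not in terms of asymptotic agreement with $g_{c,i}'$ — so that the perturbed, glued metric (which is asymptotic to $g_{c,i}$, not $g_{c,i}'$, away from the expander region) still qualifies. This should follow by inspection of the localisation carried out for Theorem \ref{stexist}, and it is precisely why the tolerance $\varepsilon_i$ in the statement is allowed to depend on $g_{N_i}$.
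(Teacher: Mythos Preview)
Your overall strategy is exactly the paper's: glue in the expander $(N,g_{N_i},f_i)$ asymptotic to the \emph{nearby} cone $g_{c,i}'$, verify that the glued metric lies in the class $\mathcal M(\eta_0,\Lambda,s)$ associated to that expander (so that Theorem~\ref{main_thm} and Corollary~\ref{corollary} apply verbatim), and absorb the discrepancy $g_{c,i}-g_{c,i}'$ as an extra error term. Indeed, in the paper the analogue of \eqref{error} simply picks up an additional summand $(1-\xi_3(r_s/s^{1/4}))(g_{c,Z}-g_{c,exp})$, whose weighted $C^4$-norm is controlled by $\varepsilon_i$; choosing $\varepsilon_i$ below the threshold $\eta_0(g_{N_i})$ of Theorem~\ref{main_thm} then closes the argument for (1)--(3). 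Your final paragraph identifies precisely the right point: $\eta_0$ depends only on $g_{N_i}$, not on which cone $g_Z$ is asymptotic to.

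There is, however, a genuine scale error in your second paragraph that would make the argument fail as written. You propose to paste in $s^2 g_{N_i}$ with transition annulus $\rho\in[s,2s]$; after rescaling by $s^{-2}$ this annulus sits at radius $[1,2]$ in the original expander, which is \emph{not} in the asymptotic region --- the error $k_{exp}(1)$ is of order one, not small. For the expander to be $C^4$-close to its cone on the transition annulus, the annulus must lie at radius $\gg$ (expander scale). The paper's glueing uses the expander at scale $s$ (meaning $g_e(s)$, so roughly radius $\sqrt s$) with transition at $r_s\in[s^{1/4},2s^{1/4}]$, so that the ratio $s^{1/4}/\sqrt s = s^{-1/4}\to\infty$ and $k_{exp}(s^{-1/4})\to 0$. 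With this corrected choice of scales your argument goes through and coincides with the paper's proof; note also that property~(4) (identification of the forward tangent flow) is not claimed here, because the closeness to the expander cannot be made arbitrarily small once $\varepsilon_i$ is fixed.
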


Of course the analogous statement is also true for the orbifold case of Theorem \ref{orbifold_thm}.  We would like to point out that the condition that the curvature operator of the cones $(C(\mathbb S^{n-1}),g_{c,i}'=dr^2+r^2 g_i ')$ is non-negative is not preserved under small perturbations. This implies that the curvature operator of $(Z,g_Z)$ might be unbounded from below in a neighborhood of the singular points. In this case, the constructed flow $(M,g(t))_{t\in (0,T]}$ will have curvature operator unbounded from below as $t\searrow 0$. 

Observe also that the case $\riem(g_i)\equiv 1$ in Theorems \ref{stexist} and \ref{orbifold_thm} corresponds to a smooth Riemannian manifold or orbifold respectively and there is nothing to prove. Similarly, the case $\riem(g_i')\equiv 1$ in Theorem \ref{perturbation_thm} corresponds to initial data which are perturbations of a smooth Riemannian metric, which is dealt with by Koch--Lamm in \cite{KochLamm15}.

{\bf Outline.} In Section \ref{prelim} we recall some facts about gradient Ricci expanders asymptotic to cones and introduce notation. 

In Section \ref{conical metrics} we define the class of Riemannian manifolds $\mathcal{M}(\eta,\Lambda,s)$, which can be understood as a local smoothing of an isolated conical singularity with an expander at scale $s$. In Theorem \ref{main_thm} we state local a priori curvature estimates for Ricci flows with initial data in $\mathcal{M}(\eta,\Lambda,s)$, which are uniform in $s$. To prove these estimates we separate the initial manifold in the \emph{conical} and \emph{expanding region}. The idea is then to use Perelman's pseudocality theorem  to control the flow for a short time in the conical region, showing that it remains conical, and use a localised version of the stability result of Deruelle--Lamm \cite{DeruelleLamm16} to control the flow in the expanding region. However, to exploit the latter we need to work with the Ricci--DeTurck flow for a suitably chosen background metric, which is an interpolation of the initial metric and the expanding metric at scale $s+t$. To pass from a solution of Ricci flow to the corresponding solution to Ricci-DeTurck flow one needs to pull back by the inverse of a solution to harmonic map heat flow $\psi$ with the background metric as a target. Assuming an a priori bound on $|\nabla \psi|$, we use Perelman's pseudolocality theorem to control the solution to Ricci--Deturck flow in the conical region: Lemma \ref{conical_region}. Then, localising the stability result of Deruelle--Lamm we control the Ricci--DeTurck flow in the expanding region: Lemma \ref{expanding_region}. We finally show how these results can be combined to prove Theorem \ref{main_thm}. A central point  is that a posteriori the assumed threshold for $|\nabla \psi|$ is never achieved, and thus the argument closes. 

In Section \ref{main_estimates} we give the proofs of Lemma \ref{conical_region} and Lemma \ref{expanding_region}. This includes a `pseudolocality' theorem for the harmonic map heat flow: Lemma \ref{hmf_pseudolocality}; and the localisation of the stability result of Deruelle--Lamm: Lemma \ref{long_time_estimate}.

In Section \ref{flowing_sing} we give the proof of Theorem \ref{stexist}, as well as that of Theorem \ref{perturbation_thm}. In Section \ref{approx} we construct the approximation sequence, by glueing in the expander metric at scale $s$ into $g_Z$ around the singular point, and showing that this metric is in the class  $\mathcal{M}(\eta,\Lambda,s)$. The proof of the statements of Theorem \ref{stexist} then follows in Sections \ref{almost_cone}-\ref{tangent flow}. In Section \ref{final proofs} we show how the proof of Theorem \ref{stexist} can be modified to prove  Theorem \ref{perturbation_thm}. Finally,  in Section \ref{orbifold expanders} we show the existence of orbifold quotient expanders and prove Theorem \ref{orbifold_thm}.

{\bf Acknowledgments:} The authors wish to thank Alix Deruelle for many interesting discussions on expanding Ricci solitons.\\

\section{Preliminaries}\label{prelim}
\subsection{Expanders asymptotic to cones.}  \label{asymp_conical_exp} A triple
$(N,g_N,f)$, where $(N,g_N)$ is a Riemannian manifold and $f$ a smooth function on $M$, is said to be a gradient Ricci expander if it satisfies the equation
\begin{eqnarray}\label{soliton_eqn}
\hess_{g_N} f=\frac{1}{2}\mathcal L_{\nabla f} g_N=\ric(g_N)+\frac{g_N}{2}.
\end{eqnarray}
As a consequence, the well known formula 
\begin{equation*}
|\nabla f |^2=f+c-R,
\end{equation*}
holds, for an appropriate constant $c$. 

Note that $f$ is well defined up to a constant and linear function. Hence, provided $(N,g_N)$ has bounded curvature, we will assume w.l.o.g. that $c=\inf_M R:=R_{inf}$, where $R$ denotes the scalar curvature. Such a normalisation always ensures that $f\geq 0$.

A gradient Ricci expander generates a solution to Ricci flow, which moves only by diffeomorphisms and scaling:
Let $\varphi_t$, $t>0$,  be the diffeomorphisms satisfying the ODE 
\begin{eqnarray}\label{diffeo_evol}
\frac{\de}{\de t}\varphi_t&=&-\frac{1}{t} \nabla f\circ \varphi_t, \\
\varphi_1 &=& id_N.
\end{eqnarray}
Then the family $g_e(t)=t \varphi_t^* g_N$ solves Ricci flow for $t>0$. Define $f_s=f\circ \varphi_s$, for any $s>0$.

We note for later reference that the ODE implies that
\begin{equation}
\varphi_{s}\circ \varphi_{t} = \varphi_{st}\, .
\end{equation}

Let $(X,g_X)$ be a smooth Riemannian manifold and $$(C(X), g_c=dr^2+r^2 g_X,o)$$ be the associated cone with vertex $o$. We will say that the expander $(N,g_N,f)$ is asymptotic to the cone $C(X)$ if
\begin{enumerate}
\item There is a diffeomorphism onto its image $F: [\Lambda_0,\infty)\times X \rightarrow N$ such that $N\setminus \mathrm{Im}\,(F)$ is compact and
\begin{eqnarray}
f(F(r,q))=\frac{r^2}{4}, \nonumber
\end{eqnarray}
for every $(x,q)\in [\Lambda_0,\infty)\times X$.
\item  $\displaystyle{\sum_{j=0}^4} \sup_{\partial B_{g_c}(o,r)} r^j|( \nabla^{g_c})^j (F^* g_N - g_c  ) |_{g_c}=k_{exp}(r)$, where $\displaystyle{\lim_{r\rightarrow\infty}}k_{exp}(r)=0$.
\end{enumerate}
From \cite[Theorem 3.2]{Deruelle14} we may assume w.l.o.g. that 
\begin{equation}
F(r,q)=J_{\frac{r^2}{4}-\frac{\Lambda_0^2}{4}}(F(\Lambda_0,q)), \label{flow}
\end{equation}
where $J_t:N\rightarrow N$ is the flow of the vector field $\nabla f /|\nabla f|^2$ with $J_0=id_N$.

A natural radial coordinate at infinity on the expander is given by $$\mathbf{r}:=2\sqrt{f}= (F^{-1})^* r\, .$$ 
 Similarly, for the expander at scale $s$, it will be convenient to consider the  radial coordinate at infinity defined as $\mathbf r_s=2 \sqrt{sf_s}$.
 
 In fact, if we define $F_s: [\Lambda_0\sqrt s,\infty) \times X \rightarrow N$ by $F_s=\varphi_s^{-1} \circ F \circ a_s$, where $a_s(r,q)=(\frac{r}{\sqrt s},q)$ for $(r,q)\in [0,\infty)\times X$, it follows that  $\mathbf r_s(F_s(r,q))=r$.
 
Moreover, since
\begin{equation}\label{conv_to_cone}
\begin{aligned}
 r^j|(\nabla^{g_c})&^j(F_s^* g_e(s) - g_c) |_{g_c} (r,q)\\
=\ &r^j | (\nabla^{g_c})^j (a_s^*\circ F^* \circ (\varphi_s^{-1})^* g_e(s) -g_c ) |_{g_c} (r,q),\\
=\ &r^j a_s^* ( | (\nabla^{(a_s^{-1})^* g_c})^j (F^* (s g_N) - (a_s^{-1})^* g_c)|_{(a_s^{-1})^* g_c}(r,q),\\
=\ &r^j   | (\nabla^{s g_c})^j (F^* (s g_N) - s g_c)|_{s g_c}   (r s^{-1/2},q),\\
=\  &\big(rs^{-1/2}\big)^j|(\nabla^{g_c})^j(F^* g_N -g_c) |_{g_c} (r s^{-1/2},q) = k_{exp}(r s^{-1/2}),
\end{aligned}
\end{equation}
$F_s^* g_e(s)$ converges to $g_c$ as $s\rightarrow 0$, uniformly away from $o$ in $C^4_{loc}$. Moreover $|\nabla^{g_e(s)} \mathbf r_s |_{g_e(s)}\rightarrow 1$, uniformly away from $o$.

We will also need the following lemma, whose proof we postpone until Section \ref{main_estimates}.

\begin{lemma}\label{exp_con_est}
Let $(N, g_N, f)$  be an asymptotically conical gradient Ricci expander and let $(g_0(t))_{t\geq 0}$ be the induced Ricci flow with $g_0(0)=g_N$. There exists $\gamma_0\geq 1$ and $C,\Lambda_0>0$ such that
\begin{gather*}
 | F^* g_0  - g_c|_{g_c} + \mathbf r| \nabla^{g_c} F^* g_0|_{g_c}<\frac{1}{100},\qquad
 \frac{1}{2}\leq|\nabla^{g_0} \mathbf r |_{g_0}\leq 2,\\
|\mathbf r \Delta_{g_0} \mathbf r |\leq 4(n-1), \qquad \mathbf r^2 |\riem(g_0)|_{g_0} \leq C(g_c)
\end{gather*}
in $\big\{ (x,t)\in N\times [0,+\infty),\; \mathbf r(x) \geq \sqrt{\gamma_0 t + \Lambda_0^2} \big\}$.
\end{lemma}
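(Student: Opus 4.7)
The strategy is to exploit the self-similar structure $g_0(t) = t\varphi_t^* g_N$ together with the asymptotic conicality of $(N, g_N)$: on the exact cone $(C(X), g_c)$, the radial map $(r,q) \mapsto (r/\sqrt{t}, q)$, pulled back and rescaled by $t$, leaves $g_c$ invariant, and the expander flow $\varphi_t$ is an asymptotic approximation of this radial rescaling at infinity. Thus, provided the trajectory $\{\varphi_s(x) : s \text{ between } 1 \text{ and } t\}$ stays in the asymptotic region of $N$, all four estimates should reduce to the corresponding cone estimates with small errors coming from $k_{exp}$. This motivates the region $\{\mathbf r(x) \geq \sqrt{\gamma_0 t + \Lambda_0^2}\}$: $\Lambda_0$ is chosen so that the asymptotic estimates are effective for $\mathbf r \geq \Lambda_0$, while $\gamma_0 \geq \Lambda_0^2$ ensures the trajectory never exits this region.

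The technical core is the ODE analysis of $u(s) := f(\varphi_s(x))$ along the trajectory. The soliton identity $|\nabla f|^2 = f + R_{inf} - R$ and the ODE defining $\varphi_s$ yield $(su)' = R(\varphi_s) - R_{inf}$, hence
\begin{equation*}
tu(t) = f(x) + \int_1^t (R - R_{inf})(\varphi_s(x))\, ds.
\end{equation*}
Since $|\riem(g_N)| = O(\mathbf r^{-2})$ at infinity by the $C^4$ asymptotic conicality, the integrand is uniformly bounded along the trajectory, giving $\rho(r, t)^2 := \mathbf r(\varphi_t(F(r,q)))^2 = (r^2/t)(1 + o(1))$ as $r \to \infty$; differentiating the ODE in $r$ and invoking the $C^4$-decay of $R$ controls $\partial_r \rho$ and $\partial_r^2 \rho$ analogously. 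Since $\varphi_t$ preserves the gradient flow lines of $f$, one obtains the factorisation $\varphi_t \circ F = F \circ \chi_t$ with $\chi_t(r, q) = (\rho(r, t), q)$, and hence
\begin{equation*}
F^* g_0(t) - g_c = t\chi_t^*(F^* g_N - g_c) + \bigl(t(\partial_r \rho)^2 - 1\bigr)dr^2 + \bigl(t\rho^2 - r^2\bigr) g_X.
\end{equation*}
The first term is bounded by $k_{exp}(\rho) \leq k_{exp}(\sqrt{\gamma_0})$; the remaining terms vanish when $\rho = r/\sqrt{t}$ exactly and are controlled by the estimates on $\rho$ from the previous step. The $C^1$ bound on $F^* g_0 - g_c$ is analogous, using the $j = 1$ part of $k_{exp}$ together with the estimate on $\partial_r^2 \rho$.

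The remaining estimates follow from the self-similarity. For the curvature, the identity $|\riem(g_0(t))|_{g_0(t)}(x) = t^{-1}|\riem(g_N)|_{g_N}(\varphi_t(x))$ combined with $|\riem(g_N)| \leq C(g_c)/\mathbf r^2$ at infinity and $\mathbf r(\varphi_t(x)) \approx \mathbf r(x)/\sqrt{t}$ yields $\mathbf r^2|\riem(g_0)| \leq C(g_c)$. For $|d\mathbf r|_{g_0}^2$ and $\Delta_{g_0}\mathbf r$, use the semigroup identity $\varphi_t^{-1} = \varphi_{1/t}$ and the standard change-of-coordinates formulas
\begin{equation*}
|d\mathbf r|^2_{g_0(t)}(x) = \tfrac{1}{t}|d(\mathbf r\circ\varphi_{1/t})|^2_{g_N}(\varphi_t(x)), \qquad \Delta_{g_0(t)}\mathbf r(x) = \tfrac{1}{t}\Delta_{g_N}(\mathbf r\circ\varphi_{1/t})(\varphi_t(x)),
\end{equation*}
together with the identity $\mathbf r \circ \varphi_{1/t}(F(\tilde r, q)) = \rho(\tilde r, 1/t) \approx \sqrt{t}\,\tilde r$, which is the first step applied with $t$ replaced by $1/t$. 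This makes $|d\mathbf r|^2_{g_0(t)} \approx 1$ and $\mathbf r\Delta_{g_0(t)}\mathbf r \approx n-1$ in the asymptotic region, yielding the stated bounds with room to spare. The main obstacle is the uniform control of the integral error term and of $\partial_r\rho$, $\partial_r^2\rho$ for $t$ large, where the trajectory may spend a long time in a region with small but nonzero curvature; this is handled by Gr\"onwall-type estimates exploiting the $C^4$ decay of $g_N - g_c$, together with a choice of $\gamma_0$, $\Lambda_0$ depending on the decay rate of $k_{exp}$ and on $R_{inf}$.
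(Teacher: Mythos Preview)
Your approach is valid but takes a genuinely different route from the paper. The paper's proof is very short: it first checks the four estimates at $t=0$ directly from the asymptotic conicality (choosing $\Lambda_0$ large so that $|F^*g_N - g_c|_{g_c} + \mathbf r|\nabla^{g_c} F^* g_N|_{g_c}$ is small, which forces $|\nabla^{g_0(0)}\mathbf r|_{g_0(0)}$ close to $1$, $\mathbf r\Delta_{g_0(0)}\mathbf r$ close to $n-1$, and $\mathbf r^2|\riem|$ bounded), and then invokes Perelman's pseudolocality theorem exactly as in the proof of Lemma~\ref{conical_region} to propagate these bounds forward in time into the region $\{\mathbf r \geq \sqrt{\gamma_0 t + \Lambda_0^2}\}$. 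Your approach instead exploits the soliton self-similarity directly, via the ODE $(su)' = R - R_{inf}$ and the radial factorisation $\varphi_t\circ F = F\circ\chi_t$, to compare $F^*g_0(t)$ to $g_c$ explicitly. This is more elementary---it avoids pseudolocality entirely---but considerably more computational; the paper's route is a two-line application of a black box it needs anyway. One bookkeeping point: the induced flow is $g_0(t) = g_e(t+1) = (t+1)\varphi_{t+1}^* g_N$, not $t\varphi_t^* g_N$, so your formulas should be shifted accordingly (this is harmless since $t+1\geq 1$ throughout, and in fact makes the ODE integration cleaner).
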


From now on, given any expander asymptotic to a cone, $\gamma_0$ and $\Lambda_0$ will refer to the constants given by Lemma \ref{exp_con_est}.
\subsection{Expanders asymptotic to cones with positive curvature operator.}$\ $\\[1ex]
It is known by the recent work of Deruelle \cite{Deruelle16} that, given $(\mathbb{S}^{n-1},g)$ with $\text{Rm}(g)\geq 1$, there exists a unique expanding gradient soliton $(N,g_N,f)$ with non-negative curvature operator, which is asymptotic to the cone $$(C(\mathbb{S}^{n-1}), dr^2+r^2g, o).$$ We note the following consequence.  The proof has similarities to the argument of Perelman in the proof of Claim 2 in \cite[\S 12]{Perelman02}.

\begin{lemma}\label{lem:expander_pos}
Assume that $(\mathbb{S}^{n-1},g)$ satisfies $\riem(g)\geq 1$ but $\riem(g)\not\equiv 1$. Then the expander $(N,g_N,f)$ that is asymptotic to $(C(\mathbb{S}^{n-1}), dr^2+r^2g, o)$, given by \cite{Deruelle16}, has positive curvature operator.  Moreover, if $f$ is normalised so that  $|\nabla f|^2=f+R_{inf}-R$, then it is unique.
 \end{lemma}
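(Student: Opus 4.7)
The plan is to establish strict positivity of the curvature operator first, and then deduce uniqueness by appealing to Deruelle's uniqueness result for asymptotically conical expanders with positive curvature operator. The existence of $(N,g_N,f)$ with $\riem(g_N)\geq 0$ is provided by \cite{Deruelle16}. I would consider the induced self-similar Ricci flow $g_e(t)=t\varphi_t^* g_N$, which by Lemma \ref{exp_con_est} has bounded, non-negative curvature operator on each compact time interval. Hamilton's strong maximum principle for the curvature operator, in the form valid for complete non-compact flows with bounded curvature, then yields the following dichotomy: either $\riem(g_N)$ is strictly positive on all of $N$, or else the kernel of $\riem$ is a parallel, flow-invariant sub-bundle of $\Lambda^2 TN$ of some constant positive rank.

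Suppose for contradiction that the second alternative holds. Then the image of $\riem$ is a parallel Lie subalgebra of $\Lambda^2 TN\cong\mathfrak{so}(n)$ that contains the restricted holonomy Lie algebra, and the de Rham decomposition theorem gives an isometric splitting of the universal cover $\tilde N=\mathbb{R}^k\times N'$ with $k\geq 1$. A standard computation using $\hess f=\ric(g_N)+g_N/2$ and the parallelism of the flat $\mathbb{R}^k$ factor (following Perelman's Claim 2 in \cite[\S 12]{Perelman02}) shows that $f$ respects this splitting, so that $N'$ is itself an asymptotically conical gradient expander of lower dimension. Passing to the tangent cone at infinity via the asymptotic map $F$, which converges in $C^4_{loc}$ away from the vertex to $(C(\mathbb{S}^{n-1}),g_c)$, the metric splitting descends to a metric splitting $C(\mathbb{S}^{n-1},g)=\mathbb{R}^k\times C'$, where $C'$ is the tangent cone of $N'$.

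Using the identification $C(X_1*X_2)=C(X_1)\times C(X_2)$ of spherical joins with metric products of cones, the splitting of $C(\mathbb{S}^{n-1},g)$ forces $(\mathbb{S}^{n-1},g)$ to be the spherical join of the round $\mathbb{S}^{k-1}$ with a cross-section $X_2$ of $C'$. Smoothness of $g$ as a Riemannian metric on $\mathbb{S}^{n-1}$ across the codimension-one join subsets then requires $X_2$ to be a round sphere of appropriate radius, forcing $g$ to coincide with the standard round metric on $\mathbb{S}^{n-1}$, so that $\riem(g)\equiv 1$, contradicting the hypothesis. This establishes $\riem(g_N)>0$. Uniqueness within the class of asymptotically conical expanders with positive curvature operator asymptotic to the given cone is then the content of Deruelle's uniqueness theorem in \cite{Deruelle16}, while the normalisation $|\nabla f|^2=f+R_{\inf}-R$ fixes the remaining additive ambiguity in $f$. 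The main technical obstacle I anticipate is verifying the hypotheses of Hamilton's strong maximum principle in the non-compact setting and carefully transporting the splitting structure through the asymptotic convergence; both rely on the quantitative curvature bounds of Lemma \ref{exp_con_est}.
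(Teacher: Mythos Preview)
Your positivity argument has a genuine gap at the step where you invoke the de Rham decomposition. Hamilton's strong maximum principle does give that $\mathrm{Ker}(\riem)$ is a parallel subbundle of $\Lambda^2 TN$ and that $\mathrm{Im}(\riem)$ is a parallel Lie subalgebra containing the restricted holonomy algebra $\mathfrak{hol}$. But a proper inclusion $\mathfrak{hol}\subsetneq\mathfrak{so}(n)$ does \emph{not} force the holonomy representation on $TN$ to be reducible: irreducible special holonomies such as $\mathfrak{u}(m)\subset\mathfrak{so}(2m)$ are proper yet act irreducibly, so no de Rham splitting follows. Even granting reducibility, nothing forces a flat $\mathbb{R}^k$ factor; one could have $N_1\times N_2$ with both factors curved. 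The reference to Perelman is misleading here: in dimension three $\Lambda^2 TM\cong TM$, so a parallel kernel in $\Lambda^2$ \emph{does} yield a parallel vector field and hence a line; this is exactly what fails for $n\geq 4$.

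The paper avoids global structure theory entirely and argues locally on the cone. At a point $(1,q)$ with $\riem(g)(q)>1$ strictly, the kernel of $\riem(g_c)$ in $\Lambda^2$ consists precisely of elements $\partial_r\wedge V$ with $V\in T_q\mathbb{S}^{n-1}$. If the parallel kernel of $\riem(g_e(t))$ survived the limit $t\to 0$ (using $F_t^*g_e(t)\to g_c$), one would obtain a section $\partial_r\wedge V$ with $\nabla(\partial_r\wedge V)|_{(1,q)}=0$. But on the cone $\nabla_W(\partial_r\wedge V)=W\wedge V+\partial_r\wedge\nabla_W V$, and the term $W\wedge V$ is never of the form $\partial_r\wedge(\cdot)$ for $W\neq 0$, a contradiction. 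This is an elementary pointwise computation that sidesteps Berger's list altogether.

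Your uniqueness argument is fine once positivity is in hand: since $\riem(g_N)>0$, any function with vanishing Hessian is constant (a nonconstant one would produce a parallel vector field and hence a flat direction), so $f$ is determined up to an additive constant, which the normalisation pins down. The paper makes this step explicit via a tangent-cone splitting argument, but your implicit use of strict positivity is enough.
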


\begin{proof}
 Assume that there exists a point $p \in N$ such that $\text{Rm}(g_N)(p)$ has a zero eigenvalue. By Hamilton's strong maximum principle there exists $\delta>0$ such that, for every $t\in (0,\delta]$, $\mathrm{Ker}(\riem(g_e(t)))$ is a positive rank subbundle of $\Lambda^2 T^* N$, invariant under parallel translation.
 
Consider $(1,q)\in (0,+\infty)\times \mathbb S^{n-1}$ such that $\riem(g)(q)>1$ and let $g_c=dr^2 + r^2 g$. Then $\mathrm{Ker}(\riem(g_c))$ in a neighbourhood of $(1,q)$ consists solely of elements of the form $\partial_r \wedge V$, for $V\in T\mathbb S^{n-1}$. Moreover, recall that for $W\in  T_q\mathbb S^{n-1}$ 
\begin{equation}
\nabla_W (\partial_r \wedge V)|_{(1,q)}= W\wedge V +\partial_r \wedge \nabla_W V,\label{cov_deriv}
\end{equation}
since $\nabla_W \partial_r=\frac{1}{r} W$ on the cone.

Now,  since $F_t^* g_e(t)$ converges to $g_c$  as $t\rightarrow 0$, we conclude that around $(1,q)$ there is a section $\partial_r\wedge V$ of $\mathrm{Ker}(\riem(g_c))$ satisfying $\nabla (\partial_r\wedge V)|_{(1,q)}=0$. This contradicts \eqref{cov_deriv}.
 
 To prove uniqueness of $f$, note that any other potential function $\tilde f$ will satisfy $\hess_{g_N}(f-\tilde f)=0$. This implies that either $\tilde f=f+ c$, for some constant $c$, or the expander splits a line by DeRham's theorem (recall that $N$ is simply connected by construction). However, the latter is not possible, since the unique tangent cone at infinity would split a line, contradicting that it has an isolated singularity. But then, if $\tilde f$ satisfies  $|\nabla \tilde f|^2=\tilde f+R_{inf}-R$, we see that $c=0$.
\end{proof}

\subsection{Distance distortion estimate.}

Let $g_0(t)=g_e(t+1)$ denote the associated Ricci flow with $g_0(0)=g_N$. Since $(g_0(t))_{t\geq 0}$ is a Type III solution for the Ricci flow, namely
\begin{eqnarray*}
\max_M |\riem(g_0(t))|_{g_0(t)}\leq \frac{C}{t+1},
\end{eqnarray*}
the following distance distortion estimate holds. There exists $C(g_N)>0$ such that for every $x,y\in N$, $t\geq0$
\begin{equation}
d_{g_0(0)}(x,y) -C(g_N) \sqrt{t} \leq d_{g_0(t)}(x,y). \label{dist_dist}
\end{equation} 
This estimate is due to Hamilton, for a proof see for example \cite[Lemma 8.33]{ChowLuNi06}.

\section{Flowing almost conical metrics}\label{conical metrics}
In this section we fix an asymptotically conical gradient Ricci expander $(N,g_N,f)$ with positive curvature operator and let $\gamma_0$, $\Lambda_0$ be as in Lemma \ref{exp_con_est}. We will consider the following class of Riemannian manifolds as initial data for the Ricci flow.  Recall that a natural coordinate at infinity for an expander at scale $s$ is given by $\mathbf r_s=2 \sqrt{sf_s}$.
\begin{definition} \label{almost_conical}
Given $\eta, s>0$, $\Lambda \geq \Lambda_0$  define the class $\mathcal M(\eta,\Lambda,s)$ of complete Riemannian manifolds $(M,g)$ with bounded curvature satisfying the following: there exist $\Phi_s:\left\{\mathbf r_s \leq 1\right\} \rightarrow M$, a function $r_s: \mathrm{Im}\, \Phi_s\rightarrow [\Lambda\sqrt{s},1]$ defined by
\begin{equation*}
 r_s=\max\left\{ (\Phi_s^{-1})^* \mathbf r_s, \Lambda \sqrt s \right\}
 \end{equation*}
and 
\begin{itemize}
\item $\sum_{j=0}^4 r^j|(\nabla^{g_c})^j((\Phi_s\circ F_s)^* g -g_c)|_{g_c} + r^j|(\nabla^{g_c})^j( F_s^*g_e(s) -g_c)|_{g_c} <\eta $ \\ in $[\Lambda\sqrt s,1]\times X$.
\item $|\Phi_s^* g - g_e(s)|_{g_e(s)}<\eta,\text{ in }\left\{ \mathbf r_s \leq 2(\Lambda+1)\sqrt s\right\}.$
\end{itemize}
Note that $[(\Phi_s\circ F_s)^* r_s ](r,q)=r$ in $[\Lambda \sqrt s,1]\times X$.
\end{definition}

A metric in $\mathcal M(\eta,\Lambda,s)$ can be viewed as the smoothening of an isolated conical singularity with an expander at scale $s$. The function $r_s$ behaves like the distance from the origin of the cone $C(X)$ when $\eta$ is small. The parameter $\Lambda$ separates the manifold  into two regions, the \textit{conical region} where it is $\eta$-close to the cone, and the \textit{expanding region} where it is $\eta$-close to the expander at scale $s$.  

The aim is to prove a priori curvature estimates for Ricci flows with initial data in $\mathcal M(\eta,\Lambda,s)$, which are uniform in $s$.  

\begin{theorem}\label{main_thm}
\mbox{}
\begin{enumerate}
\item Given $\Lambda>0$, there exist $\eta_0(g_N), s_0(\Lambda),  \tau_0(g_N), C(g_N)$ such that for every $s\in (0,s_0]$ the following holds: \\
If $(M,g(t))_{t\in [0,T]}$ is a complete Ricci flow with bounded curvature, and $(M,g(0))\in\mathcal M(\eta_0, \Lambda, s)$ then 
\begin{equation*}
\begin{aligned}
\max_{\{r_s \leq \frac{3}{4}\}} |\riem(g(t))|_{g(t)}&\leq \frac{C}{t}, \textrm{ for }t\in (0,\min\{\tau_0,T\}],\\ 
\max_{\{r_s \leq  \frac{3}{4}\}} \sum_{j=0}^2 r_s^{2+j} |(\nabla^{g(t)})^j\riem(g(t))|_{g(t)} &\leq C, \textrm{ for } t\in [0,\min\{\tau_0,T\}].
\end{aligned} 
\end{equation*}

\item  For every $\varepsilon>0$ and  integer $k\geq 0$ there exist $\eta_1=\eta_1(g_N,\varepsilon,k) $, $\gamma_1=\gamma_1(g_N,\varepsilon,k)$ such that if $s\in (0,s_0]$ and $\gamma\geq\gamma_1$ then the following holds: 

  If $(M,g(t))_{t\in [0,T]}$ is a complete Ricci flow with bounded curvature and $(M,g(0))\in \mathcal M(\eta_1,\Lambda,s)$, then for every $t\in (0,\min\{(32\gamma)^{-1}, T\}]$  there is a map 
$$Q_{s,t}:  \Big\{ r_s \leq \frac{5}{4}\sqrt{\gamma t+s(\Lambda+1)^2}\Big\}\rightarrow N,$$  diffeomorphism onto its image, such that 
 \begin{equation*}
 \big\{ \mathbf r_s\leq \sqrt{\gamma t}  \big \} \subset \mathrm{Im}\, Q_{s,t} \subset \Big\{ \mathbf r_s\leq \frac{3}{2}\sqrt{\gamma t +s(\Lambda+1)^2}  \Big\}
 \end{equation*}
 and for any non-negative index $j\leq k$
\begin{equation*}
\big|\big((t+s)^{1/2}\nabla^{g_e(t+s)}\big)^j  \left((Q_{s,t}^{-1})^* g(t) - g_e(t+s)\right)\big|_{g_e(t+s)} < \varepsilon
\end{equation*}
in $\mathrm{Im}\, Q_{s,t}$.
\end{enumerate}
\end{theorem}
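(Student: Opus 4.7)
The plan is to work not directly with Ricci flow but with a Ricci--DeTurck flow relative to a carefully chosen time-dependent background. I would first construct a background metric $\tilde g(t)$ on $M$ that equals $\Phi_s^*g_e(t+s)$ on the expanding region $\{r_s \leq (\Lambda+1)\sqrt{s}\}$ and equals $g(0)$ on $\{r_s \geq \frac{5}{4}(\Lambda+1)\sqrt{s}\}$, smoothly interpolating on the annulus using the $\eta$-closeness of the two metrics supplied by Definition \ref{almost_conical}. Let $\psi(t) : M \to M$ be the harmonic map heat flow starting from the identity with source $g(t)$ and target $\tilde g(t)$, and set $h(t) := (\psi(t)^{-1})^* g(t)$, which then solves the Ricci--DeTurck equation with background $\tilde g(t)$ and satisfies $h(0)=g(0)$, already close to $\tilde g(0)$ in the weighted norms prescribed by Definition \ref{almost_conical}.

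The core of the argument is a continuity/bootstrap on the maximal interval $[0,T^\ast]$ on which the a priori bound $\sup_M |\nabla \psi(t)|_{g(t),\tilde g(t)} \leq A$ holds, for a fixed threshold $A = A(g_N)$. Under this assumption I would analyse $h(t)$ separately in the two regions from Definition \ref{almost_conical}. In the conical region $\{r_s \geq \tfrac{1}{2}(\Lambda+1)\sqrt{s}\}\cap \{r_s \leq \tfrac{7}{8}\}$, Perelman's pseudolocality applied to $g(t)$, combined with the harmonic map heat flow pseudolocality Lemma \ref{hmf_pseudolocality}, keeps $\psi(t)$ smooth and close to the identity, and Lemma \ref{conical_region} then gives weighted $C^4$ closeness of $h(t)$ to the cone model. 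In the expanding region $\{r_s \leq 2(\Lambda+1)\sqrt{s}\}$, the localised Deruelle--Lamm stability Lemma \ref{long_time_estimate}, packaged as Lemma \ref{expanding_region}, gives $|h(t) - \tilde g(t)| < \varepsilon$ together with as many weighted derivatives as are needed. The two estimates overlap and agree on the transition annulus, yielding a uniform $C^k$ bound for $h(t) - \tilde g(t)$ on $\{r_s \leq 3/4\}$.

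The main obstacle, and the point where the continuity argument closes, is the a priori bound on $|\nabla \psi|$. Once the above closeness is in hand, one computes the standard parabolic Bochner inequality for $|\nabla \psi|^2$ using the bounded geometry of $\tilde g(t)$ (inherited from the expander on one side and from the truncation of $g_Z$ on the other), and a parabolic maximum principle adapted to the conical ends shows that $|\nabla \psi|$ stays strictly below $A$ on $[0,T^\ast]$, provided $\eta_0$ and $s_0$ are chosen small enough. Hence $T^\ast = \min\{\tau_0, T\}$ and the a priori hypothesis is genuine; this is the mechanism behind the remark in the outline that ``a posteriori the assumed threshold for $|\nabla \psi|$ is never achieved.''

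Translating back to $g(t) = \psi(t)^* h(t)$ via the uniformly controlled diffeomorphism $\psi(t)$ then yields Part (1): on the expanding region the model expander satisfies $|\riem(g_e(t+s))|_{g_e(t+s)} \leq C/(t+s) \leq C/t$, which passes to $g(t)$ through the $C^0$-closeness of $h(t)$ to $\tilde g(t)$ and the uniform bound on $\psi$; on the conical region the $C/t$ bound is immediate from pseudolocality, and the weighted derivative bounds $r_s^{2+j}|\nabla^j \riem|$ follow from Shi-type estimates after parabolic rescaling by $r_s$. For Part (2), a larger $k$ and smaller $\varepsilon$ are accommodated by choosing $\eta_1$ smaller and $\gamma_1$ larger in Lemma \ref{expanding_region}; the diffeomorphism is defined by $Q_{s,t} := F_s^{-1} \circ \Phi_s^{-1} \circ \psi(t)^{-1}$ suitably restricted, and the two inclusions for $\mathrm{Im}\, Q_{s,t}$ follow from the distance distortion estimate \eqref{dist_dist} applied on the expander side together with the $|\nabla \psi| \leq A$ control on the drift of $\psi$.
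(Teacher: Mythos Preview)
Your overall architecture---DeTurck gauge via harmonic map heat flow, bootstrap on $|\nabla\psi|$, pseudolocality in the conical part, Deruelle--Lamm stability in the expanding part---matches the paper's. But two concrete choices break the argument and would not give estimates uniform in $s$.

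\textbf{Wrong interpolation scale for $\tilde g$, and fixed rather than moving regions.} You place the transition of $\tilde g(t)$ at $r_s\sim(\Lambda+1)\sqrt{s}$ and split space into the fixed regions $\{r_s\gtrsim\Lambda\sqrt{s}\}$ and $\{r_s\lesssim\Lambda\sqrt{s}\}$. This cannot give a uniform time $\tau_0(g_N)$: pseudolocality at a point with $r_s(x)\sim\Lambda\sqrt{s}$ controls the flow only for time $\sim(\Lambda\sqrt{s})^2=\Lambda^2 s$, so your conical estimate near the interface collapses as $s\to 0$; and the localised Deruelle--Lamm lemma requires the background to equal the expander flow on a domain that \emph{grows} like $\{r_s\leq c\sqrt{\gamma t+s(\Lambda+1)^2}\}$, which for $t\sim\tau_0$ is of size $\sim\sqrt{\tau_0}$, far larger than $(\Lambda+1)\sqrt{s}$. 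The paper instead interpolates $\tilde g(t)$ at a fixed scale (near $r_s=1/2$), so that $\tilde g(t)=(\Phi_s^{-1})^*g_e(t+s)$ on all of $\{r_s\leq 1/2\}$, and then uses \emph{time-dependent} regions
\[
\mathcal D^{\mathrm{cone}}_{\gamma,\Lambda,s}=\{r_s(x)\geq\sqrt{\gamma t+s\Lambda^2}\},\qquad
\mathcal D^{\mathrm{exp}}_{\gamma,\Lambda,s}=\Big\{r_s(x)\leq\tfrac{3}{2}\sqrt{\gamma t+s(\Lambda+1)^2}\Big\},
\]
whose interface moves so that in $\mathcal D^{\mathrm{cone}}$ one always has $r_s^2\geq\gamma t$ (pseudolocality is valid) and $\mathcal D^{\mathrm{exp}}$ is exactly the growing domain needed for the stability lemma. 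Without this moving decomposition and the scale-$1/2$ interpolation, neither lemma applies up to a time independent of $s$.

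\textbf{Closing the bootstrap.} You propose a Bochner inequality and maximum principle for $|\nabla\psi|^2$; this is delicate because the curvature of $\tilde g(t)$ blows up like $r_s^{-2}$ towards the tip. The paper sidesteps this entirely: once $|\hat g-\tilde g|_{\tilde g}<\alpha$ is established on both regions, the algebraic identity $|\nabla\psi|^2_{g,\tilde g}=\mathrm{tr}_{\hat g}\tilde g$ immediately gives $|\nabla\psi|\leq c_3(g_N)$, and choosing the threshold $B=2c_3$ closes the loop without any PDE for $\psi$. Also, your $Q_{s,t}$ has a spurious $F_s^{-1}$ (it should land in $N$, not in the cone): the paper takes $Q_{s,t}=\Phi_s^{-1}\circ\psi_t$, and the image inclusions come not from \eqref{dist_dist} but from integrating the ODE $\frac{d}{d\tau}r_s(\psi_\tau(x))=\tilde g(\tilde\nabla r_s,\mathcal W(\hat g,\tilde g))$ using the $r_s^{-1}$ bound on $|\tilde\nabla\hat g|$ from the conical-region estimate.
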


Assuming a bound on the initial curvature outside of the conical and expanding region, the above result implies a global bound for the curvature in time:

\begin{corollary}\label{corollary}
Let $(M,G)\in \mathcal M(\eta_0,\Lambda,s)$ for $0<s\leq s_0$, where $\eta_0(g_N)$, $s_0(\Lambda)$ are given by Theorem \ref{main_thm}. Suppose that $\sup_{M\setminus \mathrm{Im\Phi_s}} |\riem(G)|_{G}\leq A$. Then, there exist  $T(A,g_N)$ and $C(A,g_N)$  such that the Ricci flow $g(t)$ with $g(0)=G$ exists for $t\in [0,T]$ and satisfies
\begin{equation*}
\max_{M\times [0,T]} |\riem(g)|_g \leq \frac{C(A,g_N)}{t}.
\end{equation*}
 Moreover, all the conclusions of Theorem \ref{main_thm} hold.
\end{corollary}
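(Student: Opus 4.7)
The plan is to combine the interior curvature estimate of Theorem \ref{main_thm}(1) on $\{r_s \leq 3/4\}$ with a uniform short-time bound on its complement, obtained from the hypothesis together with Perelman's pseudolocality. Since $(M,G)\in\mathcal M(\eta_0,\Lambda,s)$ has bounded curvature by definition, Hamilton--Shi gives a maximal bounded-curvature Ricci flow $(g(t))_{t\in[0,T^*)}$ with $g(0)=G$; the existence will be extended up to a time $T(A,g_N)$ via Hamilton's extension criterion once a global bound of the form $C(A,g_N)/t$ is established.

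First I would observe that on the annulus $\mathrm{Im}\,\Phi_s \cap \{r_s > 3/4\}$, the first item of Definition \ref{almost_conical} forces $(\Phi_s \circ F_s)^* G$ to be $\eta_0$-close in $C^4$ to the cone $g_c$ at a scale bounded away from the vertex, so that $|\riem(G)|_G$ is bounded there by a constant $C_1(g_N)$ depending only on the link and on $\eta_0$. Combining with the assumption on $M\setminus \mathrm{Im}\,\Phi_s$ yields a uniform bound $|\riem(G)|_G \leq K_0 := \max\{A,C_1(g_N)\}$ on the outer region $\Omega := \{r_s > 3/4\}\cup (M\setminus \mathrm{Im}\,\Phi_s)$, independent of $s$.

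Now define
\[
T_1 := \sup\bigl\{\, t \in [0, T^*) \,:\, |\riem(g(\tau))|_{g(\tau)} \leq 2K_0 \text{ on } \Omega \text{ for all } \tau \in [0,t]\,\bigr\},
\]
which is positive by continuity. The crucial step is to bound $T_1$ from below in terms of $A$ and $g_N$. At each $x_0 \in \Omega$ I would apply Perelman's pseudolocality: on the conical annulus the metric is $C^4$-close to the cone at a fixed scale, which is locally almost Euclidean away from the vertex; on $M\setminus \mathrm{Im}\,\Phi_s$ the curvature bound $A$ together with completeness of $(M,G)$ furnishes a uniform scale $r_0=r_0(A,g_N)>0$ on which the curvature and isoperimetric hypotheses of pseudolocality hold. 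The resulting estimate $|\riem(g(\tau))|_{g(\tau)}\leq \alpha/\tau + (\varepsilon r_0)^{-2}$ in $B_{g(\tau)}(x_0,\varepsilon r_0)$, $\tau\in(0,(\varepsilon r_0)^2)$, then yields $T_1 \geq T_2(A,g_N)>0$.

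Setting $T := \min\{\tau_0,T_2\}$ and $C(A,g_N) := \max\{C(g_N),\, 2K_0 T\}$, we obtain the global bound $|\riem(g(t))|_{g(t)} \leq C(A,g_N)/t$ on $M\times (0,T]$: on $\{r_s\leq 3/4\}$ this is Theorem \ref{main_thm}(1), while on $\Omega$ it follows from $2K_0 \leq 2K_0 T/t$. Hamilton's extension criterion then guarantees $T<T^*$, so $g(t)$ is smooth up to $t=T$, and the remaining conclusions of Theorem \ref{main_thm} follow on $[0,T]$. The main obstacle is the pseudolocality step on $M\setminus \mathrm{Im}\,\Phi_s$, where only the curvature bound $A$ is directly imposed; this is precisely where $A$ enters the time scale $T$, via the lower bound on $r_0$ obtained through volume and injectivity-radius comparison from the bounded-curvature hypothesis.
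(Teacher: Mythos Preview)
Your overall architecture---split $M$ into $\{r_s\leq 3/4\}$ and its complement, use Theorem~\ref{main_thm}(1) on the former, get a uniform short-time bound on the latter, then invoke Hamilton's extension criterion---is exactly right and matches the paper. The gap is in how you control the outer region $\Omega$.

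You propose to apply Perelman's pseudolocality at each $x_0\in M\setminus\mathrm{Im}\,\Phi_s$, and you say that ``the curvature bound $A$ together with completeness of $(M,G)$ furnishes a uniform scale $r_0=r_0(A,g_N)>0$ on which the curvature and isoperimetric hypotheses of pseudolocality hold.'' This is the step that fails. The hypothesis of the corollary gives \emph{only} $|\riem(G)|_G\leq A$ on $M\setminus\mathrm{Im}\,\Phi_s$; there is no volume lower bound, no injectivity-radius lower bound, and no almost-Euclidean isoperimetric control there. A two-sided curvature bound alone does not produce these: think of a very thin flat cylinder glued outside the conical region. Bishop--Gromov gives only volume \emph{upper} bounds, and Cheeger's lemma needs a volume lower bound as input. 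So there is no scale $r_0$ depending only on $A$ and $g_N$ at which the pseudolocality hypotheses are verified on $M\setminus\mathrm{Im}\,\Phi_s$, and your lower bound $T_1\geq T_2(A,g_N)$ is not justified.

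The paper avoids pseudolocality on the outer region entirely and instead uses the second estimate of Theorem~\ref{main_thm}(1), $r_s^2|\riem(g(t))|\leq C(g_N)$, to get a uniform curvature bound $C(g_N)$ on the \emph{hypersurface} $\{r_s=3/4\}$ for $t\in[0,\min\{\tau_0,T_s\}]$. With this boundary control and the initial bound $A$ on $\{r_s\geq 3/4\}$, the evolution inequality
\[
\partial_t|\riem|^2 \leq \Delta_{g(t)}|\riem|^2 + c(n)|\riem|^3
\]
and the ordinary maximum principle on the region $\{r_s\geq 3/4\}$ give a time $\tau_1(A,g_N)\leq\tau_0$ and a bound $C(A,g_N)$ on $|\riem|$ there, without any appeal to volume or isoperimetric control. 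This is both simpler and genuinely more general than the pseudolocality route; you should replace your pseudolocality step on $\Omega$ with this maximum-principle argument.
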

\begin{proof}
Since $G$ is complete with bounded curvature, Shi's theorem \cite{Shi89} provides a complete Ricci flow $g(t)$ with bounded curvature for $t\in [0,T_s]$. Then,  the second inequality of part (1) of Theorem \ref{main_thm} implies that
\begin{equation}
|\riem(g(t))|_{g(t)}\leq C(g_N)
\end{equation}
 along the level set $\{r_s=3/4\}$  for $t\in [0, \min\{\tau_0, T_s\}]$.
 
The evolution equation for the norm of the curvature tensor along Ricci flow
\begin{equation}
\frac{\partial}{\partial t} |\riem(g(t))|_{g(t)}^2 \leq \Delta_{g(t)} |\riem(g(t))|_{g(t)}^2 + c(n)  |\riem(g(t))|_{g(t)}^3
\end{equation}
and maximum principle imply that there exists $\tau_1(A,g_N)\leq \tau_0$, such that
\begin{equation}
\max_{M\setminus \{r_s\leq 3/4\}}|\riem(g(t))|_{g(t)}\leq C(A,g_N),
\end{equation}
for $t\in [0,\min\{\tau_1,T_s\}]$, for some $C(A,g_N)$.

Since, by Theorem \ref{main_thm}, 
\begin{equation}
\max_{\{r_s\leq 3/4\}} |\riem(g(t))|_{g(t)}\leq \frac{C(g_N)}{t},
\end{equation}
for $t\in [0,\min\{\tau_0,T_s\}]$, it follows that $g(t)$ exists for all $t\in [0,\tau_1]$. This suffices to prove the result.
\end{proof}

\begin{remark}
Since the expander is merely asymptotic to the cone, in practice $\Lambda$ depends on $\eta$. Namely, one has to go far into the asymptotic region of the expander, i.e. ~make $\Lambda$ large, for the metric to be close to the cone, otherwise the class $\mathcal M(\eta,\Lambda,s)$ is empty. Thus, when we apply Corollary \ref{corollary} in Section \ref{flowing_sing}, it will be important that the statement holds for arbitrary $\Lambda$ with $\eta$ independent of $\Lambda$. 
\end{remark}

The idea behind the proof of Theorem \ref{main_thm} is that Perelman's pseudolocality theorem will control the flow in the conical region, and a localised version of the weak stability result of Deruelle--Lamm \cite{DeruelleLamm16} for expanders with positive curvature operator will control the flow in the expanding region. However, to exploit the latter we need to work with the Ricci--DeTurck flow 
\begin{eqnarray}
\frac{\partial}{\partial t} \hat g=-2\ric(\hat g) + \mathcal L_{\mathcal W(\hat g, \tilde g)} \hat g,
\end{eqnarray}
where $\mathcal W(\hat g, \tilde g)_k=\hat g_{kl}\hat g^{ij}(\hat \Gamma_{ij}^l -\tilde \Gamma_{ij}^l)$ and $\tilde g(t)$ is a carefully chosen family of background metrics defined as follows. Given $(M,\hat g(0))\in \mathcal M(\eta,\Lambda,s)$, 
\begin{eqnarray}
\tilde g(t)= \xi_1(r_s) (\Phi_s^{-1})^*(g_e(t+s))+(1-\xi_1( r_s)) \hat g(0),\label{Rdtflow}
\end{eqnarray}
where $\xi_1:[0,1]\rightarrow [0,1]$ is a fixed smooth, non-increasing function which is identically equal to $1$ in $[0,\frac{1}{2}]$ and $\xi=0$ in $[\frac{5}{8},1]$. This metric interpolates between the initial metric and the expanding metric at scale $s+t$.\\

Let $(M,g(t))_{t\in [0,T]}$ be a Ricci flow with $(M,g(0))\in \mathcal M(\eta,\Lambda,s)$  and consider the harmonic map heat flow $\psi: \{r_s \leq  \frac{3}{4}\}\times [0,T]\rightarrow \{r_s \leq  \frac{3}{4}\}$:
\begin{eqnarray}
\frac{\partial}{\partial t}\psi &=&\Delta_{g(t),\tilde g(t)}\psi,\label{hmf_eqn}\\
\psi|_{t=0}&=&id_{\{r_s\leq \frac{3}{4}\}},\label{hmf_id}\\
\psi|_{\{r_s =  \frac{3}{4}\}\times [0,T]} &=& id_{\{r_s =  \frac{3}{4}\}}, \label{hmf_bndry}
\end{eqnarray}
as in \cite{Hamilton75}, where we assume that $T$ is small enough so that both $g(t)$ and $\psi_t(\cdot):=\psi(\cdot,t)$ are smooth for $t\in [0,T]$ and $\psi_t$ is a diffeomorphism for all $t\in [0,T]$. Note that $\psi_t$ is smooth up to the corner $\{r_s = \frac{3}{4}\}\times \{0\}$, since $\tilde g(0)$ and $g(0)$ coincide around $\{r_s = \frac{3}{4}\}$. It is well known that  $$\hat g(t)=(\psi_t^{-1})^* g(t)$$ is a solution to (\ref{Rdtflow}), see \cite{ChowLuNi06}.

Lemma \ref{conical_region} below controls $\hat g(t)$ in the conical region, assuming a bound on $|\nabla \psi|_{g,\tilde g}$, and Lemma \ref{expanding_region} uses the weak stability of the expander to control $\hat g$ in the expanding region, assuming control of $\hat g$ in the overlap of the two regions.

\begin{lemma}[Estimates in the conical region]\label{conical_region}
 Given $B,\alpha>0$ there exist $\eta_2(\alpha)>0$, $\gamma_2(B,\alpha)>1$ and $C(g_c)>0$  such that the following holds: 
 
 Let $(M,g(t))_{t\in [0,T]}$ be a complete Ricci flow with bounded curvature and suppose that $(M,g(0))\in \mathcal M(\eta_2 , \Lambda, s)$, for some $\Lambda \geq \Lambda_0$ and $s\leq \tfrac1{32(\Lambda+1)^2}$. Let $\tilde g$, $\psi$  and $\hat g(t)=(\psi_t^{-1})^* g(t)$ be as above, define
 \begin{equation}
 \mathcal D^{cone}_{\gamma, \Lambda,s}=\left\{(x,t)\in \{r_s \leq  \frac{3}{4}\} \times [0,(32\gamma)^{-1}],\;  r_s(x)\geq \sqrt{\gamma t +s\Lambda^2}
 \right\},
 \end{equation}
for some $\gamma\geq\gamma_2$ and suppose $|\nabla \psi|_{g,\tilde g}\leq B$ in $\{r_s \leq  \frac{3}{4}\}\times [0,\min\{(32\gamma)^{-1},T\}]$. Then the estimates
\begin{eqnarray}
 | \hat{g}-\tilde g|_{\tilde g}+r_s|\tilde \nabla  \hat{g}|_{\tilde g}&<& \alpha,\label{conical_estimate}\\
\sum_{j=0}^2 r_s^{2+j} |(\nabla^g)^j\riem(g)|_g &\leq& C
\end{eqnarray}
are valid in $\mathcal D^{cone}_{\gamma,\Lambda+1,s}\cap (  M\times [0,T])$.
\end{lemma}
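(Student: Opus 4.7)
The plan is to establish the a priori curvature bound on $g(t)$ via Perelman's pseudolocality theorem applied at every scale $r_s(x_0)$ in the conical region, and then propagate closeness of $\hat g$ to $\tilde g$ using the Ricci--DeTurck equation together with the assumed bound on $|\nabla\psi|_{g,\tilde g}$.

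For the curvature bound, I would fix a point $x_0$ with $r=r_s(x_0)\geq \sqrt{\gamma t+s(\Lambda+1)^2}$ and apply pseudolocality to the ball $B_{g(0)}(x_0,\lambda r)$ for a small $\lambda=\lambda(g_c)>0$. By the $C^4$-closeness in the definition of $\mathcal M(\eta_2,\Lambda,s)$, the rescaled initial metric $r^{-2}g(0)$ on this ball is a small perturbation of the cone $g_c$ near a non-vertex point; since $\riem(g_i)\geq 1$ yields $R(g_c)\geq 0$ on the cone, and the cone admits a Euclidean tangent space at any non-vertex point, the hypotheses of pseudolocality hold provided $\eta_2=\eta_2(\lambda,g_c)$ is small enough. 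This yields
\begin{equation*}
|\riem(g(t))|_{g(t)}\leq \tfrac{1}{t}+\tfrac{C}{(\lambda r)^2}\quad\text{on}\quad B_{g(0)}(x_0,\epsilon\lambda r)\times [0,(\epsilon\lambda r)^2].
\end{equation*}
Choosing $\gamma_2\geq(\epsilon\lambda)^{-2}$ ensures this parabolic window contains all $t$ with $(x_0,t)\in\mathcal D^{cone}_{\gamma,\Lambda+1,s}$. To upgrade to the clean bound $r_s^{2+j}|(\nabla^{g})^j\riem(g)|_g\leq C$ for $j=0,1,2$, I would use that the rescaled initial metric has curvature bounded in $C^2$ by $C(g_c)$ (from the $C^4$-closeness to the cone) and apply standard Bernstein--Shi interior estimates on a unit-scale parabolic cylinder, which removes the $1/t$ singularity at $t=0$ and produces time-uniform derivative bounds.

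For the closeness of $\hat g$ to $\tilde g$, observe that $\hat g(0)-\tilde g(0)=\xi_1(r_s)(g(0)-(\Phi_s^{-1})^*g_e(s))$; in the region $\{r_s\geq (\Lambda+1)\sqrt{s}\}$ both $\Phi_s^*g(0)$ and $F_s^*g_e(s)$ are $\eta_2$-close to $g_c$ in $C^4$, so by the triangle inequality the initial difference $\hat g(0)-\tilde g(0)$ together with $r_s\tilde\nabla(\hat g(0)-\tilde g(0))$ is $O(\eta_2)$. Now $\hat g$ solves Ricci--DeTurck with background $\tilde g$, a quasilinear parabolic system whose coefficients are controlled by the curvature bounds just established on $g$ and by the standing hypothesis $|\nabla\psi|_{g,\tilde g}\leq B$ (which also ensures $\psi_t^{-1}$ does not push points in the conical region outside of it in a controlled way). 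Interior Schauder estimates on the rescaled problem, combined with a Duhamel-type argument accounting for the forcing produced by the fact that $\tilde g$ is only an interpolation rather than a genuine Ricci--DeTurck flow, yield
\begin{equation*}
|\hat g-\tilde g|_{\tilde g}+r_s|\tilde\nabla\hat g|_{\tilde g}\leq C(B,g_c)\bigl(\eta_2+\gamma t/r_s^2\bigr)
\end{equation*}
on $\mathcal D^{cone}_{\gamma,\Lambda+1,s}$. The parabolic constraint $r_s^2\geq\gamma t$ controls the second term uniformly in $\gamma$, and taking $\eta_2(\alpha,g_c)$ sufficiently small then forces the left-hand side below $\alpha$.

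The main obstacle I anticipate is the application of pseudolocality: since the cross-section is only assumed to satisfy $\riem(g)\geq 1$ and not to be close to the round sphere, the almost-Euclidean isoperimetric inequality required by pseudolocality can only be obtained by restricting to a ball of scale $\lambda r$ depending on the cone geometry. This forces $\gamma_2$ to depend on $g_N$ rather than be universal, and requires careful tracking of how the pseudolocality constants scale with $\lambda$ so that the parabolic window $(\epsilon\lambda r)^2$ covers the required time $r^2/\gamma$. A secondary but routine difficulty is managing the forcing term in the $\hat g-\tilde g$ evolution coming from $\partial_t\tilde g$ and the cutoff $\xi_1(r_s)$; this is handled by the $C^4$ control and by confining attention to the conical region.
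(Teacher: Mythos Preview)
Your treatment of the curvature bound via pseudolocality at scale $\lambda r_s(x_0)$ is essentially the paper's approach (the paper uses the almost-Euclidean volume version of pseudolocality rather than the scalar curvature version, but either works here, and the upgrade to $C^2$ time-uniform bounds via Shi-type local estimates is the same).

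The gap is in your argument for $|\hat g-\tilde g|_{\tilde g}+r_s|\tilde\nabla\hat g|_{\tilde g}<\alpha$. You propose to run interior Schauder estimates directly on the Ricci--DeTurck equation for $\hat g$ with background $\tilde g$, using the curvature bound on $g$ and the assumption $|\nabla\psi|_{g,\tilde g}\leq B$ to control the coefficients. But $|\nabla\psi|_{g,\tilde g}^2=\tr_{\hat g}\tilde g\leq B^2$ only gives $\hat g\geq B^{-2}\tilde g$; there is no a priori \emph{upper} bound on $\hat g$ in terms of $\tilde g$. The principal part $\hat g^{ij}\tilde\nabla_i\tilde\nabla_j\hat g$ thus has $\hat g^{ij}\xi_i\xi_j$ bounded above but not below, so the equation is not uniformly parabolic from the hypotheses alone, and Schauder theory does not apply. (A related symptom: your asserted bound $C(B,g_c)(\eta_2+\gamma t/r_s^2)$ is not small in the conical region, since $r_s^2\geq\gamma t$ only gives $\gamma t/r_s^2\leq 1$; the correct scale is $t/r_s^2\leq\gamma^{-1}$.)

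The paper avoids this by never working with the Ricci--DeTurck equation in the conical region. Instead it analyses $g$ and $\psi$ separately and then combines. Pseudolocality gives $\sum_{j\leq 2}\rho^j|(\nabla^{g(0)})^j(g-g(0))|\leq C\,t/\rho^2$ and, by the same mechanism applied to the expanding Ricci flow, the analogous bound for $\tilde g-g(0)$. The key missing step is a ``pseudolocality'' for the harmonic map heat flow (the paper's Lemma~\ref{hmf_pseudolocality}): the equation $\partial_t\psi=\Delta_{g,\tilde g}\psi$ in harmonic coordinates has principal part $g^{ij}\partial_i\partial_j\psi$, which \emph{is} uniformly parabolic thanks to the curvature control on $g$, and the nonlinear lower-order term $g^{ij}(\tilde\Gamma^l_{mk}\circ\psi)\partial_i\psi^m\partial_j\psi^k$ is bounded precisely by the hypothesis $|\nabla\psi|\leq B$. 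Standard $W^{2,1}_p$ regularity then keeps $\psi$ $C^2$-close to the identity for times $t\leq\varepsilon_h(\alpha,B)\rho^2$, which yields $|(\psi^{-1})^*g-g|_{g(0)}+\rho|\nabla^{g(0)}((\psi^{-1})^*g-g)|_{g(0)}<\alpha$. The conclusion then follows from the triangle inequality $\hat g-\tilde g=(\hat g-g)+(g-g(0))+(g(0)-\tilde g)$, choosing $\gamma$ large so that $t/\rho^2$ is small.
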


\begin{lemma}[Estimates in the expanding region]\label{expanding_region}
For every $\varepsilon>0$ and integer $k\geq 0$ there exists $\alpha_0(g_N,\varepsilon,k)>0$ such that if $(M,g(t))_{t\in [0,T]}$ is a complete Ricci flow with bounded curvature and $(M,g(0))\in\mathcal M(\alpha,\Lambda,s)$, for $\alpha\leq\alpha_0$ and some $\Lambda$ and $s<\frac{1}{32(\Lambda+1)^2}$, then the following holds: Let $\hat g(t)=(\psi^{-1}_t)^* g(t) $ be the corresponding Ricci--DeTurck flow in $\{r_s\leq \frac{3}{4}\}$. If for some $\gamma\geq 1$ estimate (\ref{conical_estimate}) holds in $\mathcal D^{cone}_{\gamma,\Lambda+1,s}\cap (M\times [0,T])$ then for every $0\leq j \leq k$:
\begin{eqnarray}
 (t+s)^{j/2}|\tilde\nabla^j (\hat g-\tilde g)|_{\tilde g} < \varepsilon,
\end{eqnarray}
in $\mathcal D^{exp}_{\gamma,\Lambda,s}\cap (M\times [0,T])$, where 
\begin{equation}
\mathcal D^{exp}_{\gamma,\Lambda,s}=\left\{ (x,t)\in M\times [0,(32\gamma)^{-1}],\; r_s(x)\leq \frac{3}{2} \sqrt{\gamma t +s(\Lambda+1)^2}
\right\}.
\end{equation}
\end{lemma}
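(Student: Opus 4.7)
The plan is to identify $\hat g(t)$, in the expanding region, as a perturbation of the self-similar expander flow $g_e(t+s)$ and then invoke a localized version of the Deruelle--Lamm stability estimate (namely Lemma \ref{long_time_estimate}). First, a direct computation shows that under the hypotheses $t \leq (32\gamma)^{-1}$ and $s < 1/(32(\Lambda+1)^2)$, every $(x,t) \in \mathcal{D}^{exp}_{\gamma,\Lambda,s}$ satisfies
\begin{equation*}
r_s(x) \;\leq\; \tfrac{3}{2}\sqrt{1/32 + 1/32} \;=\; 3/8 \;<\; \tfrac{1}{2},
\end{equation*}
so the cutoff $\xi_1(r_s)$ equals $1$ and $\tilde g(t) = (\Phi_s^{-1})^* g_e(t+s)$ throughout the expanding region. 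Pulling back by $\Phi_s$, the task reduces to a perturbative estimate around the self-similar flow $g_e(t+s)$ on a subdomain of $N$.

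Next I would perform the self-similar change of variables. Using $g_e(t+s) = (t+s)\varphi_{t+s}^* g_N$, define the rescaled perturbation
\begin{equation*}
\bar g(\tau) \;:=\; (t+s)^{-1}\, \varphi_{t+s}^*\, \Phi_s^*\, \hat g(t), \qquad \tau \;:=\; \log\bigl((t+s)/s\bigr),
\end{equation*}
so that the background pulls back to the stationary expander $g_N$ and $\bar g(\tau)$ satisfies the Ricci--DeTurck equation with fixed background $g_N$. In these coordinates the expanding region corresponds to a region in $N$ whose outer $\mathbf r$-radius is comparable to $\sqrt{\gamma}$, uniformly in $\tau \geq 0$, so the problem is to control a Ricci--DeTurck perturbation on a bounded time-dependent region with prescribed boundary data.

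Now I would verify smallness of $\bar g - g_N$ on the parabolic boundary. At $\tau = 0$ one has $\psi_0 = \mathrm{id}$, and the second bullet of Definition \ref{almost_conical} gives $|\bar g(0) - g_N|_{g_N} \lesssim \alpha$ on the initial slice of the rescaled expanding region. The lateral boundary sits inside the overlap $\{\sqrt{\gamma t + s(\Lambda+1)^2} \leq r_s \leq \tfrac{3}{2}\sqrt{\gamma t + s(\Lambda+1)^2}\} \subset \mathcal{D}^{cone}_{\gamma,\Lambda+1,s}$, so by hypothesis (\ref{conical_estimate}) and the scaling factor $(t+s)^{-1}$ we also have $|\bar g - g_N|_{g_N} \lesssim \alpha$ there. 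Applying the localized weighted $L^\infty$ stability estimate --- whose monotonicity exploits the positivity of $\riem(g_N)$ via the operator $\tfrac12 \Delta_L + \mathcal L_{\nabla f} + \mathrm{Id}$, as in Deruelle--Lamm --- and choosing $\alpha_0$ small enough, one obtains $|\bar g(\tau) - g_N|_{g_N} < \varepsilon$ on the rescaled expanding region for all $\tau$. Unrescaling recovers $|\hat g(t) - \tilde g(t)|_{\tilde g(t)} < \varepsilon$ in $\mathcal{D}^{exp}_{\gamma,\Lambda,s}$.

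The higher derivative bounds $(t+s)^{j/2}|\tilde\nabla^j(\hat g - \tilde g)|_{\tilde g} < \varepsilon$ for $0 \leq j \leq k$ then follow from standard parabolic interior regularity for the Ricci--DeTurck equation at parabolic scale $\sqrt{t+s}$ around $g_e(t+s)$, bootstrapped from the $C^0$ bound just established together with the $C^1$ control already available near the lateral boundary from (\ref{conical_estimate}). The principal obstacle is carrying out the localization of the Deruelle--Lamm estimate uniformly in $s$ and $\Lambda$: the commutator and boundary terms produced by spatial cutoffs in the rescaled domain must be absorbed using the smallness of the lateral boundary data from Lemma \ref{conical_region}, and one must ensure that the weighted energy/maximum principle arguments on $N$ remain uniform with respect to this truncation.
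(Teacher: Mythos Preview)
Your overall strategy---pull back to the expander and invoke Lemma~\ref{long_time_estimate}---is exactly what the paper does, but your change of variables is more elaborate than necessary and does not match the hypotheses of that lemma. The paper performs a single \emph{fixed} parabolic rescaling: with $Q=\Phi_s\circ\varphi_s^{-1}$ and $G(t)=s^{-1}Q^*\hat g(st)$, the identity $\varphi_s\circ\varphi_{t+1}=\varphi_{s(t+1)}$ gives $s^{-1}Q^*\tilde g(st)=g_0(t)=g_e(t+1)$, so $G$ solves Ricci--DeTurck flow with precisely the time-dependent background $g_0(t)$ appearing in Lemma~\ref{long_time_estimate}. Under this rescaling $Q^*r_s=\sqrt s\,\mathbf r$, so the conical-region hypothesis~\eqref{conical_estimate} becomes $|G-g_0|_{g_0}+\mathbf r|\nabla^{g_0}G|_{g_0}<\alpha$ on the annulus $A$, while membership in $\mathcal M(\alpha,\Lambda,s)$ gives $|G(0)-g_0(0)|_{g_0(0)}<\alpha$ on the initial slice. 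These are exactly the hypotheses $H\le\alpha$ of Lemma~\ref{long_time_estimate}, whose conclusion unrescales to the claimed estimate on $\mathcal D^{exp}_{\gamma,\Lambda,s}$.

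Your time-dependent rescaling $\bar g(\tau)=(t+s)^{-1}\varphi_{t+s}^*\Phi_s^*\hat g(t)$ with $\tau=\log((t+s)/s)$ produces instead a Ricci--DeTurck flow with \emph{stationary} background $g_N$; this is a legitimate alternative framework, but Lemma~\ref{long_time_estimate} is not stated for that setting, so you would need to reformulate it. More substantively, the mechanism behind Lemma~\ref{long_time_estimate} is not a weighted maximum principle for $\tfrac12\Delta_L+\mathcal L_{\nabla f}+\mathrm{Id}$ as you describe: it is the Koch--Lamm contraction argument in the Banach spaces $X_T,Y_T$, with a cutoff $\chi$ localising to $D$. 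The crucial uniformity in $\gamma$ and $\Lambda$ comes from the fact that the commutator terms produced by $\chi$ live in the annulus $A$ and are bounded in $Y_T$ by Lemma~\ref{annulus_estimate}; this is the step that realises the ``absorption of boundary terms'' you flag as the main obstacle. The higher-derivative bounds are then obtained inside the proof of Lemma~\ref{long_time_estimate} by local regularity and self-similar rescaling, not bootstrapped afterwards from a separate $C^0$ estimate.
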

\begin{remark}
Note that for $t\in [0,(32\gamma)^{-1}]$ and $ s \leq \frac{1}{32(\Lambda+1)^2}$ we have $$2\sqrt{\gamma t+s(\Lambda+1)^2}\leq \frac{1}{2},$$ hence $\tilde g(t)=(\Phi_s^{-1})^*g_e(t+s)$ in $\mathcal D^{exp}_{\gamma,\Lambda,s}\cap (M\times \{t\})$.
\end{remark}

Assuming for now Lemmata \ref{conical_region} and \ref{expanding_region} we proceed to prove Theorem \ref{main_thm}.
\begin{proof}[Proof of Theorem \ref{main_thm}]
Let $(M,g(t))_{t\in [0,T]}$ be a complete Ricci flow with bounded curvature such that $(M,g(0))\in \mathcal M(\eta,\Lambda,s)$, for some $\Lambda\geq \Lambda_0$ and $s\in(0, \frac{1}{32(\Lambda+1)^2}]$. We will prove that the assertion of the theorem is true when $\eta=\min\{\alpha_0, \eta_2(\alpha_0)\}$, where $\alpha_0=\alpha_0(g_N,10^{-2},4)$ is the constant provided by Lemma \ref{expanding_region} and $\eta_2(\alpha_0)$ the constant provided by applying  Lemma \ref{conical_region} for a large enough constant $B>0$, which will be specified in the course of the proof.

Let $\psi$ satisfy (\ref{hmf_eqn})-(\ref{hmf_bndry}) and define 
\begin{align*}
T_*:=\max\big\{\tau,\;
\hat g(t):=(\psi^{-1}_t)^* g(t)\textrm{ is smooth and } \big.\\
\big.|\nabla \psi|_{g,\tilde g}\leq B 
 \textrm{ in }\{r_s\leq 3/4\}\times [0,\tau)
\big\}.
\end{align*}
Applying Lemma \ref{conical_region} we obtain $\gamma_2=\gamma_2(B,\alpha_0)$ such that
\begin{eqnarray}
|\hat g-\tilde g|_{\tilde g}+r_s|\tilde \nabla \hat g|_{\tilde g}&<&\alpha_0, \label{control_conical_region_1} \\
\sum_{j=0}^2 r_s^{2+j} |(\nabla^g)^j\riem(g)|_g &\leq& c_1(g_c,A) \label{control_conical_region_2}
\end{eqnarray}
 in $\mathcal D^{cone}_{\gamma_2,\Lambda+1,s }\cap (M\times [0,T_*])$.
 
 Then, Lemma \ref{expanding_region} implies that 
 \begin{eqnarray}
 |\hat g-\tilde g|_{\tilde g}+ \sqrt{t} |\tilde\nabla \hat g|_{\tilde g}+ t |\tilde\nabla^2 \hat g|_{\tilde g} &<& 0.01,  \label{control_expanding_region}
 \end{eqnarray}
 hence
 \begin{eqnarray*}
|\riem(g)|_g &\leq& \frac{c_2(g_N)}{t}\label{control_curvature}
 \end{eqnarray*}
 in $\mathcal D^{exp}_{\gamma_2, \Lambda,s }\cap (M\times [0,T_*])$.  
 
Since $(\mathcal D^{cone}_{\gamma_2,\Lambda+1,s} \cup \mathcal D^{exp}_{\gamma_2,\Lambda,s})\cap (M\times [0,T_*])= \{r_s \leq 3/4\} \times [0,\min\{(32\gamma_2)^{-1}, T_*\}]$ and $|\nabla \psi |^2_{g,\tilde g}= \tr_{\hat g} \tilde g$, it follows from (\ref{control_conical_region_1}) and (\ref{control_expanding_region}) that 
\begin{eqnarray}
|\nabla \psi|_{g,\tilde g}\leq c_3(g_N), \label{control_diffeos}
\end{eqnarray}
 in $\{r_s \leq 3/4\}\times [0,\min\{T_*,(32\gamma_2)^{-1}\}]$.
 
 Now, choosing $B=2c_3$,  \eqref{control_curvature} implies that $g(t)$ remains smooth up to time $\min\{T_*, (32\gamma_2)^{-1}\}$. This, together with \eqref{control_diffeos} and parabolic regularity implies that $\psi_t$ is also smoothly controlled up to time $\min\{T_*, (32\gamma_2)^{-1}\}$, and remains a diffeomorphism due to \eqref{control_conical_region_1} and \eqref{control_expanding_region}. It follows that $T_*> (32\gamma_2)^{-1}$ and the estimates in the statement of the theorem are valid for $t\leq \tau_0:= (32\gamma_2)^{-1}$.
 
 In order to prove the second part of the theorem, let $(M,g(0))\in \mathcal M(\eta_1, \Lambda,s)$ for
\begin{equation*}
 \eta_1=\min\{ \alpha_0(g_N,\varepsilon,k), \eta_2(\alpha_0(g_N,\varepsilon,k))\},
 \end{equation*}
putting $B=2c_3$. Combining Lemmata \ref{conical_region} and \ref{expanding_region} as above, we obtain that, for $0\leq j\leq k$,
 \begin{equation}
  |((t+s)^{1/2}\tilde \nabla)^j (\hat g-\tilde g)|_{\tilde g} < \varepsilon , \label{3.1closeness}
 \end{equation}
in $\mathcal D^{exp}_{\gamma, \Lambda,s}$ and
 \begin{equation}
 |\hat g-\tilde g|_{\tilde g}+r_s|\tilde \nabla \hat g|_{\tilde g}<\alpha_0(g_N,\varepsilon) \label{3.1closeness2}
\end{equation}
 in $\mathcal D^{cone}_{\gamma,\Lambda-1,s}$.  
  
 Set $\tilde\tau(\gamma)=(32\gamma)^{-1}$. We claim that making $\gamma$ even larger 
 \begin{equation}
 \psi_t\Big(\Big\{ r_s \leq \frac{5}{4}\sqrt{\gamma t+s(\Lambda+1)^2}  \Big\}\Big) \subset \frac{3}{2}\Big\{ r_s \leq \sqrt{\gamma t+s(\Lambda+1)^2}  \Big\}\label{diffeo_inclusion},
 \end{equation}
 for all $t\in [0,\tilde\tau]$.

To prove this, let $t\in [0,\tilde\tau]$ and suppose there is $x\in \Big\{ r_s \leq \frac{5}{4}\sqrt{\gamma t +s(\Lambda+1)^2}\Big\}$ and $\tau_1<\tau_2<t$ such that
\begin{equation}\label{interval}\begin{split}
r_s(\psi_{\tau_1} (x)) &=\frac{5}{4}\sqrt{\gamma t +s(\Lambda+1)^2}, \\
r_s(\psi_{\tau_2}(x) ) &= \frac{3}{2}\sqrt{\gamma t+s(\Lambda+1)^2}, \quad\mathrm{and} \\
r_s(\psi_{\tau}(x)) &\in \Big[\frac{5}{4}\sqrt{\gamma t +s(\Lambda+1)^2}, \frac{3}{2}\sqrt{\gamma t+s(\Lambda+1)^2}\Big] 
\end{split}
\end{equation}
for all $\tau\in[\tau_1,\tau_2]$.

Then, for every $\tau\in [\tau_1,\tau_2]$ we have
\begin{equation}\label{3.radial_c}
\begin{aligned}
\frac{d}{d\tau}r_s(\psi_\tau(x)) &=\tilde g(\tilde \nabla r_s , \mathcal W(\hat g,\tilde g) )(\psi_\tau(x),\tau)\\
&\leq c_4 |\tilde \nabla r_s|_{\tilde g} |\tilde \nabla \hat g|_{\tilde g} (\psi_\tau (x),\tau)\\
&\leq c_4 (r_s(\psi_\tau(x) ))^{-1}  |\tilde \nabla r_s|_{\tilde g}(\psi_\tau(x),\tau),
\end{aligned}
\end{equation}
where we used \eqref{3.1closeness2}. Note that the constant $c_4$ is independent of $\gamma$ but is allowed to change from line to line.

Note that 
\begin{equation}
|\tilde\nabla r_s |_{\tilde g}(y,\tau)= | \nabla^{g_0(\tau/s)} \mathbf r |_{g_0(\tau/s)} (\varphi_s(\Phi_s^{-1}(y)))\leq 2,\label{3.grad_est}
\end{equation}
as long as $r_s(y)\geq \sqrt{\gamma_0 \tau+s\Lambda_0^2}$, by Lemma \ref{exp_con_est}.

Since $t>\tau_2$, it follows from \eqref{interval} that, for $\tau \in [\tau_1,\tau_2]$,
\begin{equation*}
r_s(\psi_\tau(x))\geq \frac{5}{4}\sqrt{\gamma t+s(\Lambda+1)^2}>\sqrt{\gamma_0 \tau+s\Lambda_0^2},
\end{equation*}
as long as $\gamma\geq\gamma_0$ and $\Lambda\geq \Lambda_0$. Hence \eqref{3.grad_est} holds at $(\psi_\tau(x),\tau)$.

Putting this into \eqref{3.radial_c} we obtain
\begin{equation}
\frac{d}{d\tau} r_s(\Psi_\tau(x)) \leq c_4 (\gamma t)^{-1/2},\label{ODE}
\end{equation}
for $\tau\in [\tau_1,\tau_2]$. Integrating this we obtain
\begin{align*}
\frac{1}{4} \sqrt{\gamma t + s(\Lambda+1)^2} &<c_4 \left(\frac{t}{\gamma}\right)^{1/2}
\end{align*}
If $\gamma\geq 4 c_4$  we obtain a contradiction. Hence $\tau_2\geq t$, which implies that \eqref{diffeo_inclusion} holds for every $t \in [0,\tilde \tau]$.

Similarly we obtain the inclusion
\begin{equation*}
\big\{r_s \leq  \sqrt{\gamma t+s(\Lambda+1)^2} \big\} \subset  \psi_t \Big( \Big\{ r_s \leq \frac{5}{4}\sqrt{\gamma t + s(\Lambda+1)^2}  \Big\}\Big),
\end{equation*}

The conclusion of the theorem then holds for $Q_{s,t}=\Phi_s^{-1} \circ \psi_t$.
\end{proof}

\section{Proofs of Lemmata \ref{conical_region} and \ref{expanding_region}}\label{main_estimates}
\subsection{Estimates in the conical region}
First we need the following auxiliary lemma. Let $(M,g)$ be a complete Riemannian manifold with boundary. For every $x\in M\setminus \partial M$, recall that the $C^{2,\alpha}$-harmonic radius $r_{har,g}(x)$ at $x$ is the maximal $r<d_g(x,\partial M)/2$ with the following property: there exist harmonic coordinates $u: B_g(x,r)\rightarrow \mathbb R^n$ satisfying $u(x)=0$ and
\begin{equation}\label{harmonic_coords}
\begin{aligned}
&2^{-1 }\delta\leq g \leq 2 \delta,\\
\sum_{i,j,k}r|\partial_k g_{ij}|_{C^0}+&\sum_{i,j,k,l} r^2( |\partial^2_{kl}g_{ij}|_{C^0}+
r^{\alpha} [\partial^2_{kl} g_{ij}]_\alpha )\leq 2,
\end{aligned}
\end{equation}
 where $\delta$ here denotes the Euclidean metric in $\mathbb R^n$.

If $x\in \partial M$, the harmonic radius $r_{har, g}(x)$ is defined as the maximal $r$ such that there exists $u: B_g(x,r)\rightarrow \mathbb R^n$, mapping $B_g(x,r)$ to $\{x^n \geq 0\}$ and $B_g(x,r)\cap \partial M$ to $\{x^n=0\}$, such that  \eqref{harmonic_coords} holds and  the restriction $u|_{\partial M}$ is harmonic (see \cite{Anderson04}).

The following lemma proves a `pseudolocality'-theorem for the harmonic map heat flow.  We would like to stress that this is not a true pseudolocality-theorem since it assumes an a-priori bound \eqref{map_control} on the gradient of the solution to the harmonic map heat flow with respect to the evolving metrics. Nevertheless, in the application later we will be able to assume such a bound, and then show a-posteriori that this bound is never achieved.  Notably, due to the assumed bound on the gradient, the proof relies only on parabolic regularity. 

\begin{lemma}\label{hmf_pseudolocality}
For every $\alpha,B>0$ there is an $\varepsilon_h=\varepsilon_h(\alpha,B)>0$ with the following property.  Let $g(t),\tilde g(t)$, $t\in [0,T]$ be one-parameter families of Riemannian metrics on a smooth manifold $M^n$ with boundary $\partial M$ and that $g(0)=\tilde g(0)$ in a neighbourhood of $\partial M$. Also, let $\psi: M\times [0,T]\rightarrow M$ be a solution to the harmonic map flow
\begin{eqnarray*}
\frac{\partial}{\partial t}\psi&=&\Delta_{g,\tilde g} \psi,\\
\psi|_{t=0}&=& id_M,\\
\psi|_{\partial M\times[0,T]}&=& id_{\partial M}.
\end{eqnarray*}
Suppose that $r_{har,g(0)}(x)> \rho$ for some $x\in M$ and
\begin{gather}
|\nabla \psi|_{g,\tilde g}\leq B, \label{map_control}\\
\sum_{j=0}^2 \rho^{2+j} \left( \left| \frac{\partial}{\partial t} (\nabla^{g(0)})^j g\right|_{g(0)}+\left| \frac{\partial}{\partial t} (\nabla^{g(0)})^j \tilde g  \right|_{g(0)}   \right) \leq B \label{tderiv_control}
\end{gather}
in $B_{g(0)}(x,\rho)\times [0,\min\{\varepsilon_h\rho^2,T\}]$ and 
\begin{equation}
B^{-1}g(0) \leq \tilde g(0)\leq B g(0),\quad \sum_{j=1}^2 \rho^j | (\nabla^{g(0)})^j \tilde g(0)|_{g(0)} \leq B   \label{initial_control_tilde}
\end{equation}
at  $B_{g(0)}(x,\rho)$. Then, $\psi_t(\cdot)|_{B_{g(0)}(x, \rho/10)}:=\psi(\cdot,t)$ is a diffeomorphism onto its image for every $t\in[0,\min\{\varepsilon_h \rho^2,T\}]$ and
\begin{eqnarray*}
|(\psi^{-1})^* g - g|_{g(0)}+\rho |  \nabla^{g(0)} ((\psi^{-1})^* g - g)|_{g(0)} < \alpha ,
\end{eqnarray*}
in $B_{g(0)}(x, \frac{\rho}{10})\times [0, \min\{\varepsilon_h \rho^2,T\}]$.
\end{lemma}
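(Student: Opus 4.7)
The plan is to work in a $C^{2,\alpha}$-harmonic coordinate chart for $g(0)$ on $B_{g(0)}(x,\rho)$ (respectively the half-ball chart if $x\in\partial M$) and reduce the harmonic map heat flow to a quasilinear parabolic system in standard form. In these coordinates \eqref{harmonic_coords} gives uniform $C^{1,\alpha}$ control on $g(0)$; combined with the time-derivative bound \eqref{tderiv_control} integrated from $t=0$, we obtain $C^{1,\alpha}$-in-space, Lipschitz-in-time control of $g(t)$ on the parabolic cylinder $B_{g(0)}(x,\rho)\times[0,\varepsilon_h\rho^2]$. Similarly \eqref{initial_control_tilde} and \eqref{tderiv_control} give comparable control of $\tilde g(t)$ together with uniform equivalence $\tilde g(t)\simeq g(0)$.

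Writing $\psi^a(y,t)=y^a+u^a(y,t)$, the equation $\partial_t\psi=\Delta_{g,\tilde g}\psi$ becomes the quasilinear parabolic system
\[
\partial_t u^a = g^{ij}\partial_i\partial_j u^a - g^{ij}\Gamma^k_{ij}(g)(\delta^a_k+\partial_k u^a) + g^{ij}\tilde\Gamma^a_{bc}(\tilde g)(y+u)(\delta^b_i+\partial_i u^b)(\delta^c_j+\partial_j u^c),
\]
with $u(\cdot,0)=0$, and $u|_{\partial M}=0$ in the boundary chart. The a priori bound \eqref{map_control} controls $|\nabla u|$ pointwise, so the right-hand side is bounded in $L^\infty$ by a constant depending on $B$ and $\rho$, and the leading coefficients $g^{ij}$ lie in $C^\alpha$ uniformly in time. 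Interior (respectively boundary) parabolic Schauder estimates then yield $\|u\|_{C^{2+\alpha,1+\alpha/2}}\leq C(B)$ on a parabolic cylinder over $B_{g(0)}(x,\rho/5)$, after the usual rescaling $y\mapsto y/\rho$, $t\mapsto t/\rho^2$ that makes everything dimensionless. Because $u$ vanishes identically at $t=0$, the resulting Hölder bounds on $\partial_t u$ and $\partial_t\nabla u$ together with $u(\cdot,0)=0$ and $\nabla u(\cdot,0)=0$ give the smallness estimate $|u(y,t)| + \rho|\nabla u(y,t)|\leq C(B)\, t/\rho^2$ on $B_{g(0)}(x,\rho/10)\times[0,\varepsilon_h\rho^2]$. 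Choosing $\varepsilon_h(\alpha,B)$ sufficiently small makes this quantity strictly less than $\alpha/2$.

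Once $|\nabla u|<1/2$ on $B_{g(0)}(x,\rho/10)$, the map $\psi_t$ is a local $C^1$-diffeomorphism there; the identity boundary data (in the boundary case) and the fact that $\psi_t$ is uniformly close to the identity (in the interior case) upgrade this to injectivity, via a standard degree or contraction argument. The estimate on $(\psi^{-1})^*g-g$ then follows by differentiating the change-of-variables formula $(\psi^{-1})^*g=(D\psi^{-1})^T(g\circ\psi^{-1})(D\psi^{-1})$ and combining the smallness of $u$, $\nabla u$ with the Lipschitz-in-time control of $g(t)$ from \eqref{tderiv_control} for the $C^0$ part, together with the Schauder bound on $\nabla^2u$ for the $\nabla^{g(0)}$ term. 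The main technical point is that harmonic coordinates provide only $C^{1,\alpha}$ (not smooth) coefficients, so one must work at the level of the classical Schauder/Krylov--Safonov theory for parabolic equations with Hölder-continuous coefficients, and one must separately handle the interior and boundary chart versions of these estimates; both are standard, and crucially, the argument nowhere uses the evolution equations of $g(t)$ or $\tilde g(t)$ beyond the $t$-derivative bound \eqref{tderiv_control}, so the genuine pseudolocality comes entirely from the a priori hypothesis \eqref{map_control}.
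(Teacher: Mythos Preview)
Your approach is essentially the same as the paper's: rescale to $\rho=1$, pass to harmonic coordinates, write the harmonic map heat flow as a quasilinear parabolic system for $\psi^l$ (or equivalently $u^l=\psi^l-x^l$), invoke parabolic regularity with the a~priori bound $|\nabla\psi|\le B$, and then use $u(\cdot,0)=0$ together with the resulting time-Hölder/Lipschitz control to force smallness of $u,\nabla u,\nabla^2 u$ for small $t$. The boundary case is handled the same way in both arguments.

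There is, however, one genuine omission in your write-up. Before you can assert that the right-hand side is bounded in $L^\infty$ (or apply any regularity theory at all), you must know that $\psi_t(y)=y+u(y,t)$ remains inside the coordinate ball, so that $\tilde\Gamma^a_{bc}(\tilde g)(y+u)$ is defined and controlled by \eqref{initial_control_tilde}--\eqref{tderiv_control}. The gradient bound \eqref{map_control} alone does not give this: it controls $|\nabla u|$ but not $|u|$ directly. The paper closes this gap with a continuity argument: one defines $\tau$ to be the maximal time such that $\psi_t(B_{1/8})\subset B_{1/2}$, applies regularity on $[0,\tau]$ to obtain a uniform $C^{2+\zeta,(2+\zeta)/2}$ bound, and then uses the resulting bound on $|\partial_t\psi|$ (integrated from $t=0$) to show that in fact $\psi_t(B_{1/8-\varepsilon_B})\subset B_{1/4}$ for all $t\le\tau_B(B)$; the Lipschitz bound from \eqref{map_control} then gives $\psi_t(B_{1/8})\subset B_{1/2}$, forcing $\tau\ge\tau_B$. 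You should insert this step explicitly.

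A secondary point: the paper does not jump straight to Schauder estimates. Because a~priori you only know $|\nabla u|\in L^\infty$ (not $C^\zeta$), the nonlinear term $\tilde\Gamma(\psi)\,\partial\psi\,\partial\psi$ is only known to be bounded, so one first applies $L^p$ theory (for $p>n+2$) to get $u\in W^{2,1}_p\hookrightarrow C^{1+\zeta,(1+\zeta)/2}$, and only then bootstraps to $C^{2+\zeta,(2+\zeta)/2}$ via Schauder. Your parenthetical mention of Krylov--Safonov suggests you have this in mind, but as written your direct appeal to Schauder is not justified without this intermediate step.
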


\begin{proof}
By rescaling $g'(t)=\rho^{-2}g(\rho^2 t)$,  $\tilde g'(t)=\rho^{-2}\tilde g(\rho^2 t)$ and $\psi'(\cdot,t)=\psi(\cdot, \rho^2 t)$ we may assume that $\rho=1$.

First suppose that $x\not\in \partial M$. In harmonic coordinates $u$ in the ball $B_{g(0)}(x,1)$ we may write $u\circ\psi\circ u^{-1}=(\psi^1,\ldots,\psi^n)$. Then,
\begin{eqnarray}
\frac{\partial \psi^l}{\partial t}  &=& g^{ij}\frac{\partial^2 \psi^l}{\partial x^i \partial x^j}  - g^{ij}\Gamma^k_{ij} \frac{\partial \psi^l}{\partial x^k} + g^{ij}(\tilde \Gamma^l_{mk}\circ \psi)\frac{\partial \psi^m}{\partial x^i} \frac{\partial \psi^k}{\partial x^j}, \label{heat_coords1}\\
\psi^l|_{t=0}&=&x^l. \label{heat_coords2}
\end{eqnarray}

Observe that by \eqref{map_control} there exists $\varepsilon_B>0$ such that if $u\circ\psi_t\circ u^{-1}(B_{1/8 -\varepsilon_B}) \subset B_{1/4}$ then $u\circ\psi_t\circ u^{-1}(B_{1/8 })\subset B_{1/2}$.

By continuity, there exists a maximal $\tau\in(0,\min\{1,T\} ]$, such that $u\circ \psi_t \circ u^{-1} (B_{1/8}) \subset B_{1/2}$, for every $t\in [0,\tau]$. Hence, $\psi^l$ are controlled in $L^p(B_{1/8}\times [0,\tau] )$, for $p>n+2$. The assumptions of the lemma imply that the last term in (\ref{heat_coords1}),
 $$g^{ij}\big(\tilde \Gamma^l_{mk}\circ \psi\big)\frac{\partial \psi^m}{\partial x^i} \frac{\partial \psi^k}{\partial x^j},$$ 
 is also uniformly controlled in $L^p(B_{1/8}\times [0,\tau] )$. 
 
Parabolic regularity then implies that $\psi^l$ are controlled in $W^{2,1}_p(B_{1/8-\varepsilon_B}\times [0,\tau])$. By the embedding of $W^{2,1}_p\subset C^{1+\zeta,(1+\zeta)/2}$ for $\zeta=1-\frac{n+2}{2}$, and parabolic regularity again, it follows that 
\begin{equation}
|\psi^l |_{2+\zeta, (2+\zeta)/2} \leq C(B) \label{c2a_est}
\end{equation}
 in $B_{1/8-\varepsilon_B}\times [0, \tau]$. 

Now, observe that there exists $\tau_B\in(0,\min\{1,T\} ]$, depending only on $C(B)$, such that if \eqref{c2a_est} holds in  $B_{1/8-\varepsilon_B}\times [0, \tau_B]$ then $u\circ\psi_t \circ u^{-1}(B_{1/8-\varepsilon_B})\subset B_{1/4}$ for $t\in [0,\tau_B]$. From the above this gives $u\circ\psi_t \circ u^{-1}(B_{1/8})\subset B_{1/2}$ for all $t\in [0,\tau_B]$.Hence,  $\tau\geq \tau_B$.

Finally, it follows from \eqref{c2a_est} that for every $\alpha>0$ there is $\varepsilon_h=\varepsilon_h(\alpha,B)$ small enough so that for all $i,j,l$
\begin{eqnarray*}
\left|\psi^l -x^l \right|+ \left|\frac{\partial \psi^l}{\partial x^i} - \delta_{li}\right| + \left|\frac{\partial^2 \psi^l}{\partial x^i \partial x^j}\right| < \alpha,
\end{eqnarray*}
in $B_{1/8-\varepsilon_B}\times [0, \varepsilon_h]$, which suffices to prove the result, if $\varepsilon_B$ is chosen small enough.

If $x\in\partial M$, in addition to \eqref{heat_coords1}-\eqref{heat_coords2} holding in $B_{1/8}\cap \{x^n\geq 0\}$, we also have the following boundary conditions on $B_{1/8}\cap \{x^n =  0\}$:
\begin{equation}\label{heat_boundary}
\begin{aligned}
\psi^l|_{\{x^n=0\}} &= x^l,\;1\leq l \leq n-1,\\
\psi^n |_{\{x^n=0\}} &= 0.
\end{aligned}
\end{equation}
Since $g(0)=\tilde g(0)$ in a neighbourhood of $\partial M$ and $\psi|_{t=0}=id_M$ it follows that the compatibility conditions required for the $C^{2+\zeta,(2+\zeta)/2}$ estimates hold. The result then follows arguing as in the interior case.
\end{proof}

\begin{proof}[Proof of Lemma \ref{conical_region}]
We first recall a direct consequence of Perelman's pseudolocality theorem and Shi's local derivative estimates, \cite[Corollary A.5]{Topping10}). 

There exists $\varepsilon_{ps}>0$  depending only on $n$, such that the following holds: Let $(g(t))_{t\in [0,T]}$ be a complete, bounded curvature Ricci flow on an $n$-dimensional manifold $M$. Assume that, for some $r>0$ and $x_0 \in M$,
\begin{gather}
 \sum_{j=0}^2 r^j\big|(\nabla^{g(0)})^j\riem_{g(0)}\big|_{g(0)} \leq r^{-2}, \text{ in } B_{g(0)}(x_0,r), \label{pseudolocality.1}\\
 \vol_{g(0)}(B_{g(0)}(x_0,r)) \geq (1-\varepsilon_{ps}) \omega_n r^n,\label{pseudolocality.2}
\end{gather}
then
\begin{equation} \sum_{j=0}^2 r^j |(\nabla^g)^j \riem|_{g}(x,t) \leq (\varepsilon_{ps}r)^{-2}, \label{curvature_estimate_0}
\end{equation}
for $t\in [0, \min \{T, (\varepsilon_{ps}\rho(x))^2 \} ]$ and $x \in B_{g(0)}(x_0,\varepsilon_{ps} r)$.
 
Let $(M,g(0))\in \mathcal M(\eta,\Lambda,s)$. For sufficiently small $\eta$ we can choose $\beta, c_0 >0$, depending only on $g_c$  such that the following holds: Let $\rho(x)=\beta  r_s(x)$. Then  for all $x \in \{r_s\leq 3/4\}$ the condition \eqref{pseudolocality.1} is fulfilled with $r = \rho(x)$. Furthermore,  
for $x\in \{ (\Lambda+1)\sqrt s \leq r_s \leq 3/4 \}$ condition \eqref{pseudolocality.2} is fulfilled with $r = \rho(x)$. 

Moreover, if $x\in\{r_s=3/4\}$
\begin{equation*}
r_{har,g(0)}(x)\geq c_0,
\end{equation*}
and
\begin{equation}\label{harmonic_radius}
r_{har,g(0)}(x)\geq c_0\rho (x),
\end{equation}
for $x\in \{  (\Lambda+1)\sqrt s \leq r_s\leq 3/4 - c_0/2\}$, by the lower semicontinuity of the harmonic radius.

Then by \eqref{curvature_estimate_0}, for all $x\in \{ (\Lambda+1)\sqrt s  \leq r_s \leq 3/4\}$, 
\begin{equation}
\sum_{j=0}^2 (\rho(x))^j |(\nabla^g)^j \riem|_{g}(x,t) \leq (\varepsilon_{ps}\rho(x))^{-2}, \label{curvature_estimate}
\end{equation} 
for $t\in [0, \min \{T, (\varepsilon_{ps}\rho(x))^2 \} ]$. Now, using (\ref{curvature_estimate}) and integrating the Ricci flow equation we estimate
\begin{equation*}
\sum_{j=0}^2 \left(\rho^{2+j} \left| \frac{\partial}{\partial t}(\nabla^{g(0)})^j (g - g(0))\right|_{g(0)}\right) (x,t) \leq C(n),
\end{equation*}
and
\begin{equation}
\sum_{j=0}^2 \left(\rho^j \left| (\nabla^{g(0)})^j (g- g(0))\right|_{g(0)}\right)(x,t) \leq C(n) \frac{t}{(\rho(x))^2}, \label{g_close}
\end{equation}
for $x\in \{ (\Lambda+1)\sqrt s \leq r_s \leq 3/4 \}$ and $t\in [0, \min \{T, (\varepsilon_{ps}\rho(x))^2 \} ]$. 

Similarly, since
\begin{gather*}
(1-2\eta)g(0)\leq \tilde g(0) \leq (1+2\eta) g(0), \\
 r_s\big|\nabla^{g(0)} \tilde g(0)\big|_{g(0)}+r_s^2 \big|(\nabla^{g(0)})^2 \tilde g(0)\big|_{g(0)}\leq C(\xi_1, g_c)\eta
\end{gather*}
on $\{ (\Lambda+1)\sqrt s \leq r_s \leq 3/4 \}$, by the pseudolocality theorem applied to $(N,g_e(s+t))_{t\geq 0}$, we obtain
\begin{eqnarray*}
\sum_{j=0}^2 \left(\rho^{2+j}  \left| \frac{\partial}{\partial t}(\nabla^{g(0)})^j (\tilde g-g(0))\right|_{g(0)}\right) (x,t) \leq C(\xi_1,g_c),
\end{eqnarray*}
for $x\in \{ (\Lambda+1)\sqrt s \leq r_s \leq 3/4 \}$ and $t\in [0, \min \{T, (\varepsilon_{ps}\rho(x))^2 \} ]$. Integrating the Ricci flow equation leads to
\begin{equation}
\sum_{j=0}^2 \left(\rho^{j} \left| (\nabla^{g(0)})^j (\tilde g- g(0))\right|_{g(0)}\right)(x,t) \leq C(\xi_1,g_c)\frac{t}{(\rho(x))^2}, \label{tilde_close}
\end{equation}
for $x\in\{ (\Lambda+1)\sqrt s \leq r_s \leq 3/4 \}$ and $t\in [0, \min \{T, (\varepsilon_{ps}\rho(x))^2 \} ]$.

Hence, by  Lemma \ref{hmf_pseudolocality}, for every $\varepsilon>0$ there is $\gamma(g_c,B,\beta,\varepsilon)>1$ large enough such that
\begin{equation*}
 \Big(\big| (\Psi^{-1})^*g -g\big|_{g(0)}+ r_s\big|\nabla^{g(0)}((\Psi^{-1})^* g-g)\big|_{g(0)}\Big)(x,t)< \varepsilon,
\end{equation*}
for $x\in \{ (\Lambda+1)\sqrt s \leq r_s \leq 3/4\}$ and $t\in [0,\min\{T,\gamma^{-1}(r_s  (x))^2\}]$. Then, at any such $(x,t)$ we may estimate, by possibly making $\gamma$ even  larger (exploiting (\ref{g_close}) and (\ref{tilde_close})) and $\eta$ smaller
\begin{equation*}
\big|(\psi^{-1})^* g -\tilde g\big|_{\tilde g}\leq 2\,\Big(\big|(\psi^{-1})^* g-g\big|_{g(0)}+ \big|g-g(0)\big|_{g(0)}
 +|\tilde g - g(0)|_{g(0)}\Big) < \alpha
\end{equation*}
and
\begin{flalign*}
\big|\tilde \nabla (\psi^{-1}&)^* g \big|_{\tilde g} \leq 2\left(\big|\tilde \nabla ((\psi^{-1})^*g-g)\big|_{g(0)} + \big|\tilde \nabla (g-\tilde g)\big|_{g(0)}\right),\\
&\leq 2 \left(\big|(\tilde \nabla - \nabla^{g(0)})((\psi^{-1})^* g- g)\big|_{g(0)}+ \big|\nabla^{g(0)}((\psi^{-1})^* g - g)\big|_{g(0)} \right. \\
 & \left.\qquad\ \ + \big|(\tilde\nabla-\nabla^{g(0)})(g-\tilde g)\big|_{g(0)}+\big|\nabla^{g(0)}(g-\tilde g)\big|_{g(0)} \right)\\
&\leq C\, \big|\nabla^{g(0)} \tilde g \big |_{g(0)} \left( \big|(\psi^{-1})^* g-g \big|_{g(0)} + \big|g-g(0)\big|_{g(0)}+\big|\tilde g -g(0)\big|_{g(0)}  \right) \\
 & \ \ \ \ +C\Big( \big|\nabla^{g(0)} ((\psi^{-1})^* g-g)\big|_{g(0)} + \big|\nabla^{g(0)} (g-g(0))\big|_{g(0)} \\
 & \qquad\ \ \ \ \ \ + \big|\nabla^{g(0)} (\tilde g-g(0))\big|_{g(0)}\Big) <\frac{\alpha}{ r_s},
\end{flalign*}
which suffices to prove the theorem.
\end{proof}

\begin{proof}[Proof of Lemma \ref{exp_con_est}]
Choosing $\Lambda_0$ large, $|F^* g_0(0) - g_c|_{g_c}+\mathbf r |\nabla^{g_c} F^* g_0(0)|_{g_c}$ becomes small enough in $\{\mathbf r \geq \Lambda_0\}$ so that
\begin{equation*}
2/3\leq|\nabla^{g_0(0)} \mathbf r |_{g_0(0)}\leq 3/2\quad\mathrm{and}\quad
|\mathbf r \Delta_{g_0(0)} \mathbf r |\leq  2(n-1),
\end{equation*}
since $F^*\mathbf r =r$, $|\nabla^{g_c} r|_{g_c}=1$ and the mean curvature of the level sets of $r$ is $\Delta_{g_c} r= (n-1)/r$. Moreover, by the quadratic curvature decay we obtain 
\begin{equation*}
\mathbf r^2 |\riem(g_0(0))|_{g_0(0)} \leq C(g_c)/2.
\end{equation*}
Then, using Perelman's pseudolocality theorem as in the proof of Lemma \ref{conical_region}, we obtain the result.
\end{proof}

\subsection{Estimates in the expanding region}
In this section we show that we can adapt the estimates in \cite{DeruelleLamm16} to show that control in the conical region yields control in the expanding region.

\begin{lemma}\label{long_time_estimate}
Let $(N,g_N,f)$ be an asymptotically conical gradient Ricci expander with positive curvature operator and let $(g_0(t))_{t\geq 0}$ be the induced Ricci flow with $g_0(0)=g_N$. There exists $\alpha_0(g_N)>0$ with the following property: Let $\Lambda\geq \Lambda_0, \gamma\geq 1$ and $\mathbf r(x):=2\sqrt{f(x)}$. Define the interior region 
$$D=\left\{  (x,t)\in N\times [0,T],\; \mathbf r(x)\leq 2 \sqrt{\gamma t+(\Lambda+1)^2} \right\}$$
and the annular region
$$A=\left\{  (x,t)\in N\times [0,T],\; \sqrt{\gamma t+(\Lambda+1)^2} \leq \mathbf r(x)\leq 2 \sqrt{\gamma t+(\Lambda+1)^2}  \right\}\ .$$
Let $(g(t))_{t\in [0,T]}$ be a solution to the Ricci-DeTurck flow
\begin{equation*}
\frac{\partial}{\partial t}g(t)=-2\ric(g(t))+\mathcal L_{\mathcal W(g(t),g_0(t))}  g(t)
\end{equation*}
on $D$, and assume 
$$H:=\max\bigg\{
\sup_{D\cap \{t=0\}} |g-g_0|_{g_0},
\sup_A\left(|g-g_0|_{g_0}+\mathbf r |\nabla^{g_0} g|_{g_0}\right)
\bigg\}\leq\alpha_0.$$ 
If $D'=D\cap\{\mathbf r(x)\leq \frac{3}{2}\sqrt{\gamma t+(\Lambda+1)^2} \}$, then
\begin{equation*}
\sup_{D'}\big| (t^{\frac{1}{2}}\nabla^{g_0})^a (t\partial_t)^b(g-g_0)\big|_{g_0} \leq C_{a,b}(g_N),
\end{equation*}
for any non-negative indices $a,b$. Furthermore, for every $k = 0,1,\ldots$, there exists $C_k'=C_k'(g_N)$ and $0<\alpha_k(g_N)\leq\alpha_0$ such that if $H \leq \alpha_k$, then 
\begin{equation*}
\sup_{D'}\big| (t^{\frac{1}{2}}\nabla^{g_0})^a (t\partial_t)^b(g-g_0)\big|_{g_0} \leq C_{k}' H \, ,
\end{equation*}
provided $a+2b\leq k$.
\end{lemma}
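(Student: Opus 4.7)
The plan is to adapt the global weak stability estimates of Deruelle--Lamm for Ricci--DeTurck flow on expanders with positive curvature operator to the localized parabolic region $D$, using the annular control $H$ on $A$ to close off the argument at the outer boundary. Set $h := g-g_0$. The Ricci--DeTurck equation rewritten as a linearisation around $g_0$ gives
\begin{equation*}
\partial_t h = \Delta_{L,g_0} h + \nabla^{g_0}_X h + h \ast \nabla^{g_0,2} h + \nabla^{g_0} h \ast \nabla^{g_0} h + \riem(g_0)\ast h \ast h,
\end{equation*}
where $\Delta_{L,g_0}$ is a Lichnerowicz-type operator and $X$ is a drift term arising from the DeTurck vector field. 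Since $(N,g_0(t))$ is a self-similarly evolving expander with $\mathbf r^2 |\riem(g_0)|_{g_0}$ bounded (Lemma \ref{exp_con_est}) and positive curvature operator, the same reaction term computation as in Deruelle--Lamm shows that $(\partial_t - \Delta_{g_0}) |h|_{g_0}^2 \leq -2|\nabla^{g_0} h|^2 + c |h|^2 \cdot \mathbf r^{-2}$ plus cubic terms controllable once $|h|_{g_0}$ is small.

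The first step is the $C^0$ estimate. I would introduce a spatial-parabolic cutoff $\chi(x,t)$ supported in $\{ \mathbf r(x) \leq \frac{7}{4}\sqrt{\gamma t+(\Lambda+1)^2}\}$ and equal to $1$ on $D'$, built as a smooth function of $\mathbf r / \sqrt{\gamma t + (\Lambda+1)^2}$; using Lemma \ref{exp_con_est}, the drift $\partial_t \chi - \Delta_{g_0}\chi$ and $|\nabla^{g_0}\chi|^2/\chi$ are bounded in terms of $g_N$ alone. I would then apply the maximum principle to $\chi |h|_{g_0}^2$ (or, for the refined estimate, to $\chi |h|_{g_0}$ with a Kato-type trick), noting that the maximum is either attained in the interior, on the initial slice (where $|h|\leq H$ by hypothesis), or in the annular region $A$ (where $|h|\leq H$ by hypothesis as well). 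Smallness of $\alpha_0$ ensures that the cubic nonlinearities are absorbed into the good term $-|\nabla^{g_0} h|^2 / \mathbf r^2$ coming from the positive curvature operator, yielding $\sup_{D'}|h|_{g_0} \leq C(g_N)$ in the first claim and $\sup_{D'}|h|_{g_0}\leq C'_0(g_N) H$ once $H\leq \alpha_0(g_N)$ is small.

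With the localized $C^0$ estimate in hand, the higher derivative bounds of the form $|(t^{1/2}\nabla^{g_0})^a (t\partial_t)^b h|_{g_0} \leq C_{a,b}(g_N)$ follow from Shi-type interior parabolic estimates for the (now uniformly parabolic) Ricci--DeTurck equation. I would iterate: on any parabolic cylinder of scale $\sim t^{1/2}$ contained in the slightly larger parabolic region $D\cap\{\mathbf r\leq \tfrac{7}{4}\sqrt{\gamma t+(\Lambda+1)^2}\}$, the background $g_0$ has bounded geometry at the relevant scale (again by Lemma \ref{exp_con_est}, together with the self-similar rescaling structure of $g_0(t)$), so standard parabolic Schauder / $L^p$ regularity applied to the equation for $h$ gives the claimed bounds, with constants depending only on $g_N$. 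For the improved bounds with factor $H$ on the right-hand side (valid when $a+2b\leq k$), I would use the preceding $C^0$ bound $|h|\leq C'_0 H$ to linearise: the quadratic nonlinearity becomes $O(H)\cdot (\text{linear in }h)$ and can be absorbed, so iteratively applying interior Schauder estimates to the linearised equation yields $|(t^{1/2}\nabla^{g_0})^a(t\partial_t)^b h|\leq C'_k H$ under the smallness hypothesis $H\leq \alpha_k(g_N)$.

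The main obstacle is the localisation of the Deruelle--Lamm reaction-term computation: globally they benefit from positivity of curvature operator everywhere, whereas here the cutoff $\chi$ introduces boundary and transition terms that must be absorbed or controlled. The key technical point is that $\chi$ is chosen so that both $|\nabla^{g_0}\chi|_{g_0}\cdot \mathbf r$ and $(\partial_t - \Delta_{g_0})\chi \cdot t$ are bounded independently of $\gamma,\Lambda,s$ (this uses the bounds $|\nabla^{g_0}\mathbf r|_{g_0}\leq 2$ and $|\mathbf r \Delta_{g_0}\mathbf r|\leq 4(n-1)$ from Lemma \ref{exp_con_est}), so that the cutoff error contributes only a harmless forcing on $A$ which is then dominated by the assumed annular bound $H$.
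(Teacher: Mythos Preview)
Your approach via the scalar maximum principle on $\chi|h|_{g_0}^2$ has a genuine gap at the $C^0$ step. The linearised reaction term in $(\partial_t-\Delta_{g_0})|h|_{g_0}^2$ is of size $|\riem(g_0(t))|\,|h|^2$, and on the expander $|\riem(g_0(t))|\sim (1+t)^{-1}$ near the tip. This is \emph{not} integrable in time, so a Gr\"onwall/comparison argument only gives $|h|(t)\lesssim (1+t)^{C}H$, which is useless since the lemma requires constants independent of $T$. Positive curvature operator does not give the Lichnerowicz reaction term a favourable sign for $|h|^2$; in Deruelle--Lamm it is used instead to obtain Gaussian heat-kernel bounds for the Lichnerowicz semigroup, which feed into the Koch--Lamm functional framework rather than into a pointwise maximum principle. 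Your claim that the cutoff errors $(\partial_t-\Delta_{g_0})\chi$ and $|\nabla\chi|^2/\chi$ are ``bounded in terms of $g_N$ alone'' is also not right: they are of order $(t+\gamma^{-1}(\Lambda+1)^2)^{-1}$, again non-integrable.

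The paper's proof is structurally different. It works in the Koch--Lamm/Deruelle--Lamm Banach spaces $X_{T'},Y_{T'}$ and writes the evolution of $\chi^2 h$ as $(\partial_t-L_t)(\chi^2 h)=S_1[h]+S_2[h]$, with $S_2$ supported in $A$ and $S_1$ the localised nonlinearity. The main linear estimate from Deruelle--Lamm (their Theorem 6.1) gives $\|\chi^2 h\|_{X_{T'}}\leq C(\|S_1\|_{Y_{T'}}+\|S_2\|_{Y_{T'}})$, and the crucial new ingredient is the separate Lemma~\ref{annulus_estimate}, which shows $\|S_2\|_{Y_{T'}}\leq C(g_N)H$ with constant \emph{independent of $\gamma,\Lambda,T$}; this is a nontrivial geometric computation about how the moving annulus $A$ meets parabolic cylinders $P(x,R)$. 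One then closes by continuity: $\|\chi^2 h\|_{X_{T'}}\leq C(\|\chi^2 h\|_{X_{T'}}^2+H)$. The higher derivative bounds are obtained by interior parabolic regularity on cylinders of radius $\sim t^{1/2}$ (your idea here is fine), but the paper additionally exploits the self-similar rescaling $g^\lambda(t)=\lambda\varphi_\lambda^*g(\lambda^{-1}(t+1)-1)$, under which $g_0^\lambda=g_0$, to reduce large-$t$ estimates to the case $t\leq 1$.
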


\begin{proof}
Fix a smooth function $0\leq\xi_2\leq 1$, identically equal to $1$ in $[0,1]$ and $0$ in $[2,+\infty)$ and let $C_{\xi_2}>0$ be a constant such that
\begin{equation*}
|\xi_2'|+|\xi_2''| \leq C_{\xi_2}.
\end{equation*} 
Define the following cut-off function in $D$:
\begin{equation*}
\chi(x,t)=\xi_2\Big({\mathbf r(x)}\big(\gamma t +(\Lambda+1)^2\big)^{-1/2}\Big).
\end{equation*}
Since $\mathbf r > \Lambda_0$ in $A$ it follows from Lemma \ref{exp_con_est} that $|\nabla^{g_0} \mathbf r|_{g_0}^2  \leq 2$ and $|\mathbf r\Delta_{g_0} \mathbf r| \leq 4(n-1)$ in $A$. Hence, we compute
\begin{equation*}
|\nabla^{g_0(t)} \chi|^2_{g_0(t)} \leq \frac{C_{\xi_2}^2}{ \gamma t + (\Lambda+1)^2} |\nabla^{g_0(t)} \mathbf r |_{g_0(t)}^2 \leq \frac{ C_1(\xi_2)}{t+\gamma^{-1}(\Lambda+1)^2}. 
\end{equation*}
Moreover, we compute
\begin{equation*}
\partial_t \chi =-\frac{1}{2} \xi_2'\Big({\mathbf r(x)}\big(\gamma t +(\Lambda+1)^2\big)^{-1/2}\Big) \frac{\mathbf r(x)}{\sqrt{\gamma t +(\Lambda+1)^2} } \frac{1}{ t+\gamma^{-1}(\Lambda+1)^2}, 
\end{equation*}
hence $|\partial_t \chi|\leq C_{\xi_2} \big(t+\gamma^{-1}(\Lambda+1)^2\big)^{-1}$, because $\xi_2'=0$ in the set $\{\mathbf r \geq 2 \sqrt{\gamma t +(\Lambda+1)^2}\}$.

Similarly, we compute
\begin{align*}
\Delta_{g_0(t)} \chi =&\ \xi_2'\Big({\mathbf r(x)}\big(\gamma t +(\Lambda+1)^2\big)^{-1/2}\Big) \frac{1}{\sqrt{\gamma t +  (\Lambda+1)^2}} \Delta_{g_0(t)} \mathbf r \\
&+ \frac{1}{\gamma t + (\Lambda+1)^2} \xi_2''\Big({\mathbf r(x)}\big(\gamma t +(\Lambda+1)^2\big)^{-1/2}\Big)|\nabla^{g_0(t)}\mathbf r|_{g_0(t)}^2,
\end{align*}
hence $|\Delta_{g_0(t)} \chi| \leq C_2(n,\xi_2)\big(t+\gamma^{-1}(\Lambda+1)^2\big)^{-1}$, because $\xi_2'=0$  in $\{\mathbf r\leq \sqrt{\gamma t+(\Lambda+1)^2}\}$. Putting everything together gives
\begin{equation}
\big|\nabla^{g_0(t)} \chi \big|_{g_0(t)}^2 + |\partial_t \chi| + |\Delta_{g_0} \chi |\leq \frac{C_3(n,\xi_2)}{t+\gamma^{-1}(\Lambda +1)^2}, \label{cutoffest}
\end{equation}
in $D$. Moreover, since $\mathbf r(x)\geq  \big(t+\gamma^{-1}(\Lambda +1)^2\big)^{1/2}$ in $A$, we obtain
\begin{equation}
\big|\nabla^{g_0(t)} g(t)\big|_{g_0(t)} \leq \frac{H}{\sqrt{t+\gamma^{-1}(\Lambda +1)^2}} \label{dg_est}
\end{equation}
in $A$. Now, letting $h(t)=g(t)-g_0(t)$, the Ricci-DeTurck flow in $D$ takes the form
\begin{equation*}
(\partial_t-L_t) h=R_0[h]+\nabla R_1[h],
\end{equation*}
where 
\begin{align*}
L_t h_{ij}&=\Delta_{g_0(t)} h_{ij} +2\riem(g_0(t))_{iklj}h_{kl}-\ric(g_0(t))_{ik}h_{kj}-\ric(g_0(t))_{jk}h_{ki},\\
R_0[h]&=\riem(g_0(t))\ast h\ast h +O(h^3)\ast \riem(g_0(t))\\
&\quad+ g^{-1}\!\ast g^{-1}\! \ast\! \nabla^{g_0(t)} h\ast \nabla^{g_0(t)} h, \\
\nabla R_1[h]&= \nabla^{g_0(t)}_p\Big( \big( (g_0(t)+h(t))^{pq} -(g_0(t))^{pq}  \big) \nabla^{g_0(t)}_q h\Big),
\end{align*}
and $O(h^3)$ satisfies $|O(h^3)|_{g_0(t)}\leq C|h(t)|^3_{g_0(t)}$.  Also we denote 
\begin{equation*}
R_1[h]=\big( (g_0(t)+h(t))^{pq} -(g_0(t))^{pq}  \big) \nabla^{g_0(t)}_q h.
\end{equation*}
A direct computation yields the following evolution equation for  $\chi^2 h$:
\begin{equation}
\begin{split}\label{a}
(\partial_t-L_t)(\chi^2 h)=&\ \chi^2 R_0[h] + \nabla^{g_0(t)}(\chi^2 R_1[h]) \\
&+ \big(2\chi \,\partial_t\chi - 2\chi \,\Delta_{g_0(t)} \chi - 2\big|\nabla^{g_0(t)} \chi \big|^2\big) h  \\
&-2\chi\, \nabla^{g_0(t)} \chi \ast \nabla^{g_0(t)} h - 2\chi\,\nabla^{g_0(t)}\chi \ast R_1 [h] . 
\end{split}
\end{equation}

Define
\begin{align*}
P(x,R)&=\left\{ (y,t)\in N\times [0,+\infty), y\in B_{ g_0(t)}(x,R), t\in \big[0,R^2\big]    \right\},\\
Q(x,R)&=\left\{ (y,t)\in N\times [0,+\infty), y\in B_{  g_0(t) }(x,R), t\in \big[R^2/2,R^2\big]    \right\}.
\end{align*}
Given $0<T'<T$, we consider the Banach spaces $X_{T'}$ and $Y_{T'}=Y^0_{T'}+\nabla Y^1_{T'}$, with norms defined as follows, as in \cite{ DeruelleLamm16, KochLamm12}:
\begin{align*}
||h||_{X_{T'}}&= \sup_{N\times [0,T']} |h|_{g_0 }+\\
& \sup_{(x,R)\in N\times (0,\sqrt{T'})} \left( R^{-\frac{n}{2}} || \nabla^{ g_0 } h||_{L^2(P(x,R))} + R^{\frac{2}{n+4}} ||\nabla^{ g_0 } h||_{L^{n+4}(Q(x,R))} \right), \nonumber \\
||h||_{Y^0_{T'}}&=\sup_{(x,R)\in N\times (0,\sqrt{T'})} \left(  R^{-n} || h ||_{L^1(P(x,R))} + R^{\frac{4}{n+4}} ||  h||_{L^{\frac{n+4}{2}}(Q(x,R))}   \right), \\
||h||_{Y^1_{T'}}&=\sup_{(x,R)\in N\times (0,\sqrt{T'})} \left(  R^{-\frac{n}{2}} ||h||_{L^2(P(x,R))} +R^{\frac{2}{n+4}} ||h||_{L^{n+4}(Q(x,R))}  \right).
\end{align*}

Let
\begin{align*}
S_1[h]&= \chi^2 R_0[h] + \nabla^{g_0(t)}(\chi^2 R_1[h]), \\
S_2[h]&= \big(2\chi\, \partial_t\chi - 2\chi\, \Delta_{g_0(t)} \chi - 2\big|\nabla^{g_0(t)} \chi \big|^2  \big) h \\ 
&\ \ \ \ -2\chi\, \nabla^{g_0(t)} \chi \ast \nabla^{g_0(t)} h - 2\chi\, \nabla^{g_0(t)}\chi \ast R_1 [h], 
\end{align*}
as they appear in (\ref{a}). 

By (\ref{cutoffest}) and (\ref{dg_est}) it follows that $S_2[h]$ is supported in $A$ and satisfies 
\begin{equation*}
|S_2[h] |_{g_0(t)} \leq \frac{C_4 H}{t+ \gamma^{-1}(\Lambda+1)^2},
\end{equation*}
 hence, applying Lemma \ref{annulus_estimate},  we obtain
\begin{equation} \label{Est_S2}
|| S_2[h]  ||_{Y_{T'}}=|| S_2[h] ||_{Y^0_{T'}}\leq C(g_N) C_4 H.
\end{equation}

To estimate $S_1[h]$ we may estimate for the first two terms in $\chi^2 R_0[h]$:
\begin{equation}\label{S1_term0}\begin{split}
\big|\chi^2 ( h\ast h +O(h^3))\ast \riem&\big|_{g_0(t)} \leq C\chi^2 |h|_{g_0(t)}^2 |\riem(g_0(t))|_{g_0(t)},\\
&\leq C |\chi^2 h|_{g_0(t)}^2 |\riem(g_0(t))|_{g_0(t)} \\&\quad+C \chi^2 (1-\chi^2) |h|_{g_0(t)}^2  |\riem(g_0(t))|_{g_0(t)},\\
&\leq C |\chi^2 h|_{g_0(t)}^2 |\riem(g_0(t))|_{g_0(t)} \\&\quad+ \frac{C(g_N) H \chi^2(1-\chi^2)}{t+\gamma^{-1}(\Lambda+1)^2},
\end{split}
\end{equation}
since from Lemma \ref{exp_con_est}  
\begin{equation*}
|\riem(g_0)|_{g_0}\leq \frac{C(g_N)}{\mathbf r^2}\leq \frac{C(g_N)}{\gamma t+ (\Lambda+1)^2},
\end{equation*}
 in $A$.

For the term involving $\nabla^{g_0(t)} h$ we compute
\begin{equation*}\begin{split}
\chi^2 g^{-1}\ast g^{-1}\ast \nabla^{g_0(t)}h&\ast  \nabla^{g_0(t)}h\\
&= \chi^2(1-\chi^2) g^{-1}\ast g^{-1}\ast \nabla^{g_0(t)}h\ast  \nabla^{g_0(t)}h\\
&\quad+ g^{-1}\ast g^{-1}\ast \nabla^{g_0(t)}(\chi^2 h)\ast  \nabla^{g_0(t)}(\chi^2 h)\\
&\quad+g^{-1}\ast g^{-1}\ast \chi^2\ast \nabla^{g_0(t)}\chi \ast \nabla^{g_0(t)}\chi \ast h \ast h\\
&\quad+g^{-1}\ast g^{-1}\ast \chi^3 \ast \nabla^{g_0(t)}\chi \ast \nabla^{g_0(t)} h \ast h.
\end{split}
\end{equation*}
From this we may estimate
\begin{equation}\label{S1_term1}
\begin{split}
\big|\chi^2 g^{-1}\ast g^{-1}&\ast \nabla^{g_0(t)}h\ast  \nabla^{g_0(t)}h\big|_{g_0(t)}\leq C| \nabla^{g_0(t)}(\chi^2 h)|_{g_0(t)}^2  \\
&+ \chi^2(1-\chi^2) | \nabla^{g_0(t)} h |_{g_0(t)}^2 + C\chi^2|\nabla^{g_0(t)}\chi |_{g_0(t)}^2 | h |_{g_0(t)}^2 
\\ &+C \chi^3 | \nabla^{g_0(t)}\chi |_{g_0(t)} | \nabla^{g_0(t)} h |_{g_0(t)}| h|_{g_0(t)}.
\end{split}
\end{equation}
Note that the terms in the second and third line are supported in $A$ and due to \eqref{cutoffest} and \eqref{dg_est} are bounded by $CH/(t+\gamma^{-1}(\Lambda +1)^2)$. Here we assumed that w.l.o.g. $H\leq 1$.
Finally, for $\chi^2 R_1[h]$  we have
\begin{equation}\label{S1_term2}\begin{split}
|\chi^2 R_1[h]|_{g_0(t)} &\leq C |\chi^2 h|_{g_0(t)} |\nabla^{g_0(t)} (\chi^2 h)|_{g_0(t)}+C |\nabla^{g_0(t)} \chi |_{g_0(t)}|h|^2_{g_0(t)} \\
&\qquad+ C(1-\chi^2)\chi^2 |h|_{g_0(t)} |\nabla^{g_0(t)} h|_{g_0(t)},
\end{split}\end{equation}
where again the last two terms are supported in $A$ and due to \eqref{cutoffest} and \eqref{dg_est} are bounded by $CH^2/(t+\gamma^{-1}(\Lambda +1)^2)^{1/2}$.
Thus combining \eqref{S1_term0}, \eqref{S1_term1} and \eqref{S1_term2} and using Lemma \ref{annulus_estimate}, together with the estimate from Lemma 3.1 in \cite{DeruelleLamm16},  we can estimate
\begin{equation*}
||S_1[h]||_{Y_{T'}} \leq C (||\chi^2 h||_{X_{T'}}^2 +  H)\ .
\end{equation*}
We can use this estimate, together with \eqref{Est_S2}, to apply the main estimate, Theorem 6.1 in the stability result of Deruelle-Lamm, \cite{DeruelleLamm16},
to obtain
\begin{equation*}
||\chi^2 h ||_{X_{T'}} \leq C( ||\chi^2 h||_{X_{T'}}^2 +  H  ).
\end{equation*}
Therefore, for every $T'\leq T$ such that $||\chi^2 h ||_{X_{T'}} \leq \frac{1}{2C}$ we have
\begin{equation*}
||\chi^2 h ||_{X_{T'}} \leq C  H  .
\end{equation*}
Thus, if $ \max\{H,C  H\}   < 1/(2C)$ it follows that 
\begin{equation*}
||\chi^2 h ||_{X_{T}} \leq C  H  ,
\end{equation*}
since $$\lim_{T'\rightarrow 0}( ||\chi^2 h ||_{X_{T'}} - \sup_{N\times [0,T']} |\chi^2h|_{g_0})=0 \quad\mathrm{and}\quad \lim_{T'\rightarrow 0} \sup_{N\times [0,T']} |h|_{g_0}\leq H.$$

\noindent The decay estimates follow by a local argument and scaling. We split them in several steps.\\[1ex]
{\bf Claim 1:} {\it There exists $0<r_0<1$, $\varepsilon_0 > 0$ and constants $C_{a,b}>0$ such that the following holds:
Let $x_0 \in N, t_0 \in (0,1], 0<r<\min(\sqrt{t_0}, r_0)$ and $g(t)$ a solution to Ricci-DeTurck flow with background $g_0(t)$ on $$C(x_0,t_0,r):= \bigcup_{t\in (t_0-r^2,t_0)} B_{g_0(t)}(x_0,r)\times \{t\}$$ with $|g(t)-g_0(t)|_{g_0} \leq \varepsilon_0$. Then
\begin{equation*}
\big| (r\nabla^{g_0})^a (r^2\partial_t)^b(g-g_0)\big|_{g_0}(x_0,t_0) \leq C_{a,b}\ .
\end{equation*}
Furthermore, for every $k \in \mathbb{N}$ there exists $0<\varepsilon_k\leq\varepsilon_0$, such that if additionally $|g(t)-g_0(t)|_{g_0} \leq \varepsilon_k$ on $C(x_0,t_0,r)$, then there exists a constant $C_{a,b}'>0$
\begin{equation*}
\big| (r\nabla^{g_0})^a (r^2\partial_t)^b(g-g_0)\big|_{g_0}(x_0,t_0) \leq C'_{a,b} \sup_{C(x_0,t_0,r)}\big|g(t)-g_0(t)\big|_{g_0}\ ,
\end{equation*}
provided $a+2b \leq k$}.

We can assume that $r_0$ is sufficiently small, such that $g_0(t)$ is well controlled in a suitable coordinate system in $B_{g_0(0)}(p_0,r_0)$ for $0\leq t \leq 1$. The estimate then follows from local estimates for the Ricci-De Turck flow, see \cite[Proposition 2.5]{Bamler14}.\\[1ex]
{\bf Claim 2:} {\it There exists $0<\delta<1$, independent of $\gamma$ and $\Lambda$, such that for any $(x,t)\in D'$ we have}
$$C(x,t, (\delta t)^{1/2}) \subset D\ .$$
Note first the following basic estimate
\begin{equation*}
\begin{aligned}
 \frac{3}{2}\sqrt{\gamma t + (\Lambda +1)^2} +\sqrt{t/16} &\leq \frac{3}{2}\sqrt{\gamma t + (\Lambda +1)^2} + \sqrt{\gamma t/16}\\ &\leq \frac{3}{2}\sqrt{\gamma t + (\Lambda +1)^2} + \frac{1}{4}\sqrt{\gamma t +(\Lambda +1)^2}\\
 &= 2\sqrt{\frac{49}{64}\gamma t + \frac{49}{64}(\Lambda +1)^2} \leq 2\sqrt{\frac{49}{64}\gamma t + (\Lambda +1)^2}.
\end{aligned}
\end{equation*}
 Let $(x,t) \in D'$. By Lemma \ref{exp_con_est}, the function $\mathbf r$ satisfies
 $$\frac{1}{2} \leq |\nabla^{g_0(t)}\mathbf{r}|_{g_0(t)} \leq 2$$
 in $\{\mathbf{r}(x) \geq \sqrt{\gamma_0t +\Lambda_0^2}\}$. This, together with the previous estimate, implies there exists a $\delta >0$ such that 
\begin{equation*}
\begin{aligned}
B_{g_0(t')}(x,(\delta t)^{1/2} )&\subset \Big\{\mathbf{r} \leq \frac{3}{2}\sqrt{\gamma t + (\Lambda +1)^2} +\sqrt{t/16}\Big\}\\
&\subset \Big\{\mathbf{r} \leq 2\sqrt{\gamma t' + (\Lambda +1)^2}\Big\}\, 
  \end{aligned}
\end{equation*}
where $t'\in ((1-\delta)t,t) \subset ((49/64)t, t)$.\\[1ex]
{\bf Decay estimates in $D'$:} In the case that $0<t<1$, the estimates follow directly from claim 1 and 2.  Fix a point $(x_0,t_0) \in D'$. We can assume that $1<t_0\leq T$.  Let $\lambda:= 2/(t_0+1)$. Recall that we denote with $\varphi_t$ the diffeomorphisms which generate the Ricci flow $g_e(t) = t\varphi_t^*g_N$ of the expanding gradient soliton. We define
$$g^\lambda(t) = \lambda \varphi^*_\lambda g(\lambda^{-1}(t+1)-1)\ .$$
Note that this scaling is chosen such that $g_0^\lambda(t) = g_0(t)$. This implies that $g^{\lambda}$ solves Ricci-DeTurck flow with background $g_0(t)$ on 
$$D^\lambda=\left\{  (x,t)\in \varphi_{\lambda^{-1}}\Big\{\mathbf r(x)\leq 2 \sqrt{\gamma ((t+1)/\lambda -1)+(\Lambda+1)^2}\Big\}\subset N\times [0,1] \right\}$$
and the point $( x_0', 1)$, where $x_0':=\phi_{\lambda^{-1}}(x)$, corresponds to $(x_0,t_0)$ under this scaling. By claim 2 we see that 
$$D^\lambda \supset C(x_0',1,(\delta \lambda t_0)^{1/2}) \supset C(x_0',1, \delta^{1/2})\ .$$ We can thus apply claim 1 to obtain
\begin{equation*}
\big| (\nabla^{g_0})^a (\partial_t)^b(g^\lambda-g_0)\big|_{g_0}(x'_0,1) \leq \tilde{C}_{a,b} \end{equation*}
where $\tilde{C}_{a,b}=\delta^{-(a/2+b)}C_{a,b}$. Similarly
\begin{equation*}
\big| (\nabla^{g_0})^a (\partial_t)^b(g^\lambda-g_0)\big|_{g_0}(x'_0,1) \leq C''_{a,b} \sup_{C(x_0',1, \delta^{1/2}) }\big|g^\lambda(t)-g_0(t)\big|_{g_0} \ ,
\end{equation*}
where $C''_{a,b}=\delta^{-(a/2+b)}C'_{a,b}$.

 Since the norms are invariant under the diffeomorphism $\varphi_\lambda$, we obtain the desired estimates at $(x_0,t_0)$ by scaling back to $g(t)$.
\end{proof}

\begin{lemma}\label{annulus_estimate}
Let $(N,g_N,f)$ be an asymptotically conical gradient Ricci expander with positive curvature operator and let $(g_0(t))_{t\geq 0}$ be the induced Ricci flow with $g_0(0)=g_N$. There is a $C(g_N)>0$ such that for every $\Lambda\geq \Lambda_0$ the following holds. Define 
\begin{eqnarray*}
A&=&\left\{  (x,t)\in N\times [0,T],\; \sqrt{\gamma t+(\Lambda+1)^2} \leq \mathbf r(x)\leq 2\sqrt{\gamma t+(\Lambda+1)^2}  \right\}
\end{eqnarray*}
for some $\gamma\geq 1$, where $\mathbf r(x):=2\sqrt{f(x)}$. Then,  if the tensors  $h_1,h_2$ are supported in $A$ and satisfy $|h_1|_{g_0(t)} +|h_2|^2_{g_0(t)} \leq \frac{D}{t+\gamma^{-1}(\Lambda +1)^2}$ then 
\begin{eqnarray*}
 || h_1 +\nabla^{g_0(t)} h_2 ||_{Y_T} \leq C(g_N) D.
\end{eqnarray*} 
\end{lemma}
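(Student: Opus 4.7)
The plan is to estimate $\|h_1\|_{Y^0_T}$ and $\|h_2\|_{Y^1_T}$ separately, using the decomposition $Y_T=Y^0_T+\nabla Y^1_T$ so that $\|h_1+\nabla h_2\|_{Y_T}\leq\|h_1\|_{Y^0_T}+\|h_2\|_{Y^1_T}$. Set $a:=\gamma^{-1}(\Lambda+1)^2$, so the defining condition $(y,t)\in A$ translates to $t+a\in[\mathbf r(y)^2/(4\gamma),\mathbf r(y)^2/\gamma]$.

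The heart of the proof is the uniform weighted volume estimate
\begin{equation*}
I(x,R):=\int_0^{R^2}\!\int_{B_{g_0(t)}(x,R)}\chi_{A}(y,t)\,\frac{dV_{g_0(t)}(y)\,dt}{t+a}\leq C(g_N)R^n,
\end{equation*}
uniform in $x\in N$, $R>0$, $\gamma$ and $\Lambda$. To prove it, I would swap the order of integration. The distance-distortion estimate \eqref{dist_dist} gives $B_{g_0(t)}(x,R)\subset B_{g_0(0)}(x,(1+C_0)R)$ for $t\leq R^2$, and on the conical region containing $A$ Lemma \ref{exp_con_est} yields $dV_{g_0(t)}\leq C\,dV_{g_0(0)}$ uniformly in $t$. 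Hence
\begin{equation*}
I(x,R)\leq C\int_{B_{g_0(0)}(x,(1+C_0)R)}\biggl(\int_{\{t\in[0,R^2]:\,(y,t)\in A\}}\frac{dt}{t+a}\biggr)dV_{g_0(0)}(y).
\end{equation*}
For each fixed $y$ with $r:=\mathbf r(y)$, the inner $t$-interval corresponds, via the substitution $s=t+a$, to a subset of $[r^2/(4\gamma),r^2/\gamma]$, on which $\int s^{-1}\,ds=\ln 4$ \emph{independent of all parameters}. Since the positive curvature operator of $g_N$ is preserved by Ricci flow, in particular $\mathrm{Ric}(g_0(t))\geq 0$, so Bishop--Gromov gives $\mathrm{vol}_{g_0(0)}(B_{g_0(0)}(x,(1+C_0)R))\leq\omega_n(1+C_0)^nR^n$, completing the claim.

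Given $I(x,R)\leq CR^n$, the norm bounds are routine. For $\|h_1\|_{Y^0_T}$, the support in $A$ and $|h_1|\leq D/(t+a)$ give $R^{-n}\!\int_{P(x,R)}|h_1|\leq R^{-n}D\cdot I(x,R)\leq CD$; on $Q(x,R)$ the inequality $t+a\geq R^2/2$ upgrades the pointwise bound to $|h_1|^{(n+4)/2}\leq(2/R^2)^{(n+2)/2}D^{(n+4)/2}(t+a)^{-1}$, and integrating against $I$ followed by the $((n+4)/2)$-th root yields $R^{4/(n+4)}\|h_1\|_{L^{(n+4)/2}(Q)}\leq CD$. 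The computation for $\|h_2\|_{Y^1_T}$ is entirely analogous, with $|h_2|^2\leq D/(t+a)$ playing the role $|h_1|$ did, passing from $L^1$ and $L^{(n+4)/2}$ to $L^2$ and $L^{n+4}$ respectively.

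The main obstacle is producing the key estimate with a constant depending \emph{only} on $g_N$: estimating the integrand pointwise, or bounding $|A(t)|\lesssim\rho(t)^n$ and integrating in $t$ first, both produce constants that diverge as $\gamma\to\infty$. The essential trick is that the weight $(t+a)^{-1}$ together with the dyadic width of $A$ in the variable $t+a$ produces a \emph{dimensionless} time-integral equal to $\ln 4$. This scale-invariance, paired with Bishop--Gromov on $(N,g_0(0))$, is what forces every parameter to drop out of the final constant.
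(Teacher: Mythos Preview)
Your proof is correct and takes a genuinely different, cleaner route than the paper's. The paper proceeds by a case analysis on how the parabolic cylinder $P(x,R)$ intersects $A$: it distinguishes whether the basepoint $x$ sits near the inner radius scale (the case $\hat P(x,R)\subset S(\Lambda,R)$) or in the properly conical zone, and in each case bounds the time interval during which $P(x,R)$ meets $A$, using either volume bounds for the full annulus $A\cap(N\times\{t\})$ or for the ball $B_{g_0(t)}(x,R)$ coming from the conical estimates of Lemma~\ref{exp_con_est}. Several subcases (e.g.\ $R\gtrless\Lambda+1$, and a further dichotomy on the entry/exit times $t_{in},t_{out}$) are then needed to verify that no dependence on $\gamma$ or $\Lambda$ creeps into the constants.

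Your approach bypasses all of this. By swapping the order of integration you reduce the whole estimate to the single observation that, for each fixed $y$, the set $\{t:(y,t)\in A\}$ is, in the variable $s=t+a$, a subinterval of $[\mathbf r(y)^2/(4\gamma),\mathbf r(y)^2/\gamma]$, so $\int s^{-1}\,ds\leq\ln 4$ independently of all parameters. The remaining ingredients---distance distortion to replace $B_{g_0(t)}(x,R)$ by a time-independent ball, monotonicity of the volume form under $\ric\geq 0$, and Bishop--Gromov for the spatial integral---are standard and all exploit the positive curvature operator hypothesis directly. This is arguably more transparent than the paper's argument, which never invokes $\ric\geq 0$ globally and relies instead on the conical comparison of Lemma~\ref{exp_con_est}. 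One small remark: your $Y^1_T$ computation for $h_2$ actually yields $C\sqrt D$ rather than $CD$; the paper is equally loose on this point (it only writes out the $Y^0_T$ case), and in every application $D$ is a power of $H\leq 1$, so nothing is lost.
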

\begin{remark}
The importance of Lemma \ref{annulus_estimate} is that the constant $C(g_N)$ does not depend on $\Lambda$ or $\gamma$.
\end{remark}
\begin{proof}
We begin  by estimating the terms in the norms of $Y^0_{T'}$ and $Y^1_{T'}$ in two different cases. We will only present the computations for the norm of $h:=h_1$ in $Y^0_{T'}$ since the norm of $h_2$ in $Y^1_{T'}$ can be treated in a similar way. In the following $C(g_N)$ will denote a constant that depends only on the expander and is allowed to change from line to line.

To estimate the first term in $|| h||_{Y_{T'}^0}$, consider, first,  the following cases regarding $P(x,R)$:
\begin{itemize}
\item $P(x,R)\cap A \subset \{t\leq c_1 \gamma^{-1}R^2\}\cup  \{t\leq c_1 \gamma^{-1}(\Lambda+1)^2\}$
\begin{equation*}
\begin{aligned}
 &R^{-n}\int_{P(x,R)} |h(t)|_{g_0(t)} \de\mu_{g_0(t)} \de t \leq  \\ 
 & \leq D R^{-n} \int_0^{\frac{c_1 \max\{ R,\Lambda+1\}^2}{\gamma}} \int_{B_{g_0(t)}(x,R)\cap (A\cap N\times \{t\})} \frac{1}{t+\frac{(\Lambda +1)^2}{\gamma}} \de\mu_{g_0(t)} \de t.
\end{aligned} 
\end{equation*}
Now, for $R\geq \Lambda+1$ we estimate
\begin{equation}\label{estA1}
\begin{aligned}
R^{-n}&\int_{P(x,R)} |h(t)|_{g_0(t)} \de\mu_{g_0(t)} \de t \leq  \\ 
&\leq C(g_N) D R^{-n} \int_0^{\frac{c_1 R^2}{\gamma}} \frac{ (\gamma t+(\Lambda+1)^2)^{\frac{n}{2}}}{t+\gamma^{-1}(\Lambda +1)^2}  \de t, \\
&\leq C(g_N) D R^{-n} \gamma^{\frac{n}{2}} \left( \int_0^{\frac{c_1 R^2}{\gamma}} \left(t+\gamma^{-1}(\Lambda +1)^2\right)^{\frac{n}{2}-1}\de t \right),\\
&\leq C(g_N)  (c_1+1)^{\frac{n}{2}} D, 
\end{aligned}
\end{equation}
since  $\vol_{g_0(t)}(A\cap (N\times \{t\}))\leq C(g_N)  (\gamma t+(\Lambda+1)^2)^{\frac{n}{2}} $ from Lemma \ref{exp_con_est}.

  For $R<\Lambda+1$ we estimate as follows
\begin{equation}
\begin{aligned}
R^{-n}&\int_{P(x,R)} |h(t)|_{g_0(t)} \de\mu_{g_0(t)} \de t \leq  \\
&\leq C(g_N) D \int_0^{\frac{c_1 (\Lambda+1)^2}{\gamma}} \frac{\de t}{t+\gamma^{-1}(\Lambda +1)^2} \leq C(g_N)  \log(c_1+1) D, \label{estA2}
\end{aligned}
\end{equation}
where we also use that $\vol_{g_0(t)}(B_{g_0(t)}(x,R))\leq C(g_N) R^n$, which follows again from Lemma \ref{exp_con_est}.

\item $P(x,R)\cap A\subset \{\frac{\alpha R^2}{m}\leq t \leq \alpha R^2\}$, for some $\alpha \in (0,1]$. Then,
\begin{equation}\label{estB}
\begin{aligned}
 R^{-n}\int_{P(x,R)} |h(t)|_{g_0(t)} & \de\mu_{g_0(t)} \de t \leq\\
 &\leq R^{-n} \int_{\frac{\alpha R^2}{m}}^{\alpha R^2} \int_{B_{g_0(t)}(x,R)} |h(t)|_{g_0(t)} \de\mu_{g_0(t)} \de t \\
&\leq D  C(g_N) \int_{\frac{\alpha R^2}{m}}^{\alpha R^2}  \frac{\de t}{t} =D  C(g_N) \log m.
\end{aligned}
\end{equation}

\end{itemize}

For the second term in the definition of the norm of $Y^0_{T'}$ we can estimate directly:
\begin{equation}\label{estC}
\begin{aligned}
R^{\frac{4}{n+4}}& \left( \int_{Q(x,R)} |h|^{\frac{n+4}{2}} \de\mu_{g_0(t)} \de t  \right)^{\frac{2}{n+4}}\leq \\
& \leq D R^{\frac{4}{n+4}} \left( \int_{\frac{R^2}{2}}^{R^2}  \frac{C(g_N) R^n}{(t+\gamma^{-1}(\Lambda +1)^2)^{\frac{n+4}{2}}} \de t\right)^{\frac{2}{n+4}}\leq D C(g_N).
\end{aligned}
\end{equation}
Now, recall  the distance distortion estimate (\ref{dist_dist}) from Section \ref{prelim}:
\begin{eqnarray*}
 d_{g_0(0)}(x,y) - C(g_N) \sqrt t \leq d_{g_0(t)}(x,y).
\end{eqnarray*}
It implies that for $K=1+C(g_N)$, and every $x\in N$ and $t\in [0,R^2]$,
\begin{eqnarray*}
 B_{g_0(t)}(x,R)\subset B_{g_0(0)}(x,K R), 
\end{eqnarray*}
hence $P(x,R)\subset \hat P(x,R):= B_{g_0(0)}(x, KR) \times [0,R^2]$. 

Define 
\begin{align*}
S(x,R) &= \left\{ \mathbf r(x) - 2 K R \leq \mathbf r \leq \mathbf r(x) + 2 K R \right\}\times [0,R^2],\\
S(\Lambda,R)&= \left\{\mathbf r\leq 4(\Lambda +1) +4 K  R\right\}\times [0,R^2], 
\end{align*} 
and recall that 
\begin{equation}
\frac{1}{2}\leq|\nabla^{g_0(0)} \mathbf r|_{g_0(0)}<2 \label{grad_r}
\end{equation}
 in $\{\mathbf r\geq \Lambda\}$, by Lemma \ref{exp_con_est}, since $\Lambda\geq \Lambda_0$
 
We distinguish the following cases:
\begin{enumerate}
\item $\hat P(x,R)\subset S(\Lambda, R):$ In this case  let
\begin{equation*}
t_0=\max\{ t\in [0,R^2], A\cap S(\Lambda,R) \cap (N\times \{t\}) \neq \emptyset\}.
\end{equation*}
Then,
\begin{equation*}
\begin{aligned}
t_0&\leq\frac{( 4(\Lambda+1)+4 K R )^2}{\gamma}\\
&\leq ( 4 +4  K)^2 \frac{R^2}{\gamma}, \textrm{if } R\geq \Lambda+1\\
&\leq ( 4 +4 K )^2  \frac{(\Lambda+1)^2}{\gamma}, \textrm{if } R< \Lambda+1,
\end{aligned}
\end{equation*}
and the result follows from estimates \eqref{estA1}-\eqref{estA2} and \eqref{estC}.\\
\item  $\hat P(x,R)\not\subset S(\Lambda, R):$ In this case we can use \eqref{grad_r} to conclude that $\mathbf r(x) - 2 K R \geq 4(\Lambda+1)>\Lambda$ and $\hat P(x,R)\subset S(x,R)$.\\[1ex]
We may define
\begin{eqnarray*}
t_{in}&=&\min\{t\in [0,R^2],\; S(x,R)\cap A\cap (N\times \{t\})\neq \emptyset \}, \\
t_{out}&=&\max\{t\in [0,R^2],\; S(x,R)\cap A \cap (N\times \{t\})\neq \emptyset \},
\end{eqnarray*}
and note that $t_{in}>0$.\\[1ex]
Let $\alpha\in (0,1]$ such that  $t_{out}=\alpha R^2$. From  $$\sqrt{\gamma t_{out} +(\Lambda+1)^2} \geq \mathbf r(x)+2  K R$$ it follows that
\begin{eqnarray*}
 \mathbf r(x)\geq (\sqrt{\gamma \alpha}- 2 K)  R. 
\end{eqnarray*}
 Then, using $ \mathbf r(x)-2 K R=2\sqrt{\gamma t_{in}+(\Lambda+1)^2}$ we conclude that
\begin{equation}
\begin{aligned}
 t_{in}& \geq\frac{1}{4\gamma} (\sqrt{\gamma \alpha} - 4 K )^2 R^2 -\frac{(\Lambda+1)^2}{\gamma} \\
 &= \frac{\alpha R^2}{4} \left(\frac{(\sqrt{\gamma \alpha} -4 K  )^2 -\frac{4(\Lambda+1)^2}{R^2}}{\gamma\alpha}\right).
\end{aligned}
\end{equation}
Notice that if $\alpha>\gamma^{-1} \max\{(8(1+C(g_N)))^2, 32 R^{-2}(\Lambda+1)^2 \}$ it follows that $t_{in}\geq \frac{t_{out}}{32}$, and the result follows from estimates \eqref{estB}-\eqref{estC}. In any other case, either $t_{out} \leq C \gamma^{-1} R^2$ or $t_{out}\leq C\gamma^{-1}(\Lambda +1)^2$, therefore the result follows from estimates \eqref{estA1}-\eqref{estA2} and \eqref{estC}.
\end{enumerate}

\end{proof}

\begin{proof}[Proof of Lemma \ref{expanding_region}]
Suppose that $(M,g(0))\in \mathcal M(\alpha,\Lambda,s)$. Observe that the following identities hold regarding $Q=\Phi_s\circ \varphi_s^{-1}$:
\begin{eqnarray*}
Q^*r_s&=&\sqrt s \mathbf r,\\
s^{-1}Q^*\tilde g (st)&=&g_0(t) \textrm{ in } \Big\{\mathbf r \leq \frac{1}{2\sqrt s}\Big\}.
\end{eqnarray*}
Denoting $G(t)= s^{-1}Q^*\hat g(st)$ we obtain by the assumption on $\mathcal D^{cone}_{\gamma,\Lambda+1,s}$ that
\begin{equation*}
\begin{aligned}
| G(t) - g_0(t)|_{g_0(t)} +  \mathbf r |\nabla^{g_0(t)} G(t)&|_{g_0(t)}\leq Q^*(|\hat g-\tilde g|_{\tilde g}+  r_s |\tilde \nabla \hat g|_{\tilde g})(st)<\alpha,
\end{aligned}
\end{equation*}
in $\big\{\mathbf r\geq \sqrt{\gamma t+(\Lambda+1)^2}\big\}=Q^{-1}\big(\big\{ \sqrt{\gamma s t+s(\Lambda+1)^2} \leq r_s \leq 3/4 \big\}\big)$ for any \linebreak $t\in [0,s^{-1}\max\{   (32\gamma)^{-1}, T\}]$.

Moreover,
\begin{equation*}
 \big| G(0)-g_0(0) \big|_{g_0(0)}=  Q^*\big( | g(0)-\tilde g(0)|_{\tilde g(0)}\big) < \alpha
\end{equation*}
 in $\{\mathbf r\leq 2(\Lambda+2)\}$, since $(M,g(0))\in \mathcal M(\alpha,\Lambda,s)$.

Therefore,  by Lemma \ref{long_time_estimate}, for every $\varepsilon>0$ there is  $\alpha_0(g_N,\varepsilon,k)>0$ such that if $\alpha\leq\alpha_0$ then
\begin{eqnarray*}
\sup_{D'}\big|(t\partial_t)^a (t^{\frac{1}{2}}\nabla^{g_0(t)})^b (G(t)-g_0(t)) \big|_{g_0(t)}  < \varepsilon,
\end{eqnarray*}
for any non-negative indices $a,b$  with $a+2b\leq k$, where $D'$ is as in Lemma \ref{long_time_estimate}. Hence
\begin{eqnarray*}
\big|(t\partial_t)^a (t^{\frac{1}{2}}\tilde \nabla)^b (\hat g(t)-\tilde g(t)) \big|_{\tilde g(t)}(x) < \varepsilon,
\end{eqnarray*}
for $(x,t)\in M\times [0,\max\{ (32\gamma)^{-1}, T\}]$ satisfying $ r_s(x)\leq \frac{3}{2}\sqrt{\gamma t + s(\Lambda+1)^2}$, which suffices to prove the theorem.
\end{proof}

\section{Flowing metrics with conical singularities}\label{flowing_sing}
The aim of this section is to prove Theorem \ref{stexist} in the case of one conical singularity at $z_1\in Z$ modeled on the cone $(C(\mathbb S^{n-1}), g_c=dr^2 +r^2 g_1)$, with $\riem(g_1)\geq 1$ and $\riem(g_1)\not\equiv 1$, denoting the coordinate around $z_1$ of Definition \ref{conical_sing} by $\phi$. Since the arguments are local, the case of more than one singular points can be treated similarly. Then, we proceed to prove Theorem \ref{perturbation_thm}.

Let $(N,g_N,f)$ be the unique expander asymptotic to $(C(\mathbb S^{n-1}), g_c)$ given by \cite{Deruelle16}. Recall that it has strictly positive curvature operator, by Lemma \ref{exp_con_est}. Moreover, let $\kappa>0$ and $\Lambda_1\geq \Lambda_0$  be small and large constants respectively, which will be determined later in the course of the proof. By rescaling we may assume that $r_0=1$ and $k_Z(r)<\kappa$  for $r\in (0,1]$.

\subsection{The approximating sequence} \label{approx} Given any $s\in (0,\frac{1}{2}]$ let $ Z_s=Z\setminus \phi ((0, s^{1/4}) \times X)$ and $\mathbf r_s$ as in Section \ref{prelim}. Define the diffeomorphic manifolds 
\begin{equation*}
M_s = \frac{Z_s  \bigsqcup \{ \mathbf r_s \leq 1\} }{\{\phi(r,q) = F_{s}(r,q), r\in [s^{1/4},1]\}},
\end{equation*}
equipped with the  natural embeddings $\Phi_s : \{\mathbf r_s\leq 1 \} \rightarrow  M_s$, $\Psi_s: Z_s\rightarrow M_s$.
Also, define $r_s: M_s\rightarrow [0,1]$ as
\begin{equation*}
r_s(x)=\left\{
\begin{array}{ll}
\Lambda_1\sqrt{s}, & x\in\Phi_s(\{\mathbf r_s\leq \Lambda_1 \sqrt s\}),\\
(\Phi_s^{-1})^*\mathbf r_s, & x\in \Phi_s(\{  \Lambda_1 \sqrt s \leq \mathbf r_s \leq 1\}),\\
1, & x\in M_s\setminus \mathrm{Im}\,\Phi_s.
\end{array}
\right. 
\end{equation*}
and note that
\begin{equation}
r_s=((\Psi_s\circ\phi)^{-1})^*r \label{radial_fcn}
\end{equation}
in $\mathrm{Im}\, \Phi_s \cap \mathrm{Im}\,\Psi_s$.

Let $\xi_3$ be a smooth, positive and non-increasing function equal to $1$ in $(-\infty,1]$ and $0$ in $[2,+\infty)$. Now, we may define a Riemannian metric $G_s$ on  $M_s$ as follows 
\begin{equation*}
G_s = \xi_3(r_s/s^{1/4}) (\Phi_s^{-1})^*g_e(s) +(1-\xi_3(r_s/ s^{1/4})) (\Psi_s^{-1})^* g_Z.
\end{equation*}
In particular
 \begin{equation}\label{idata}
 G_s=\left\{
 \begin{array}{ll}
 (\Psi_s^{-1})^* g_Z   &\;\mathrm{in}\; \{r_s\geq 2s^{1/4}\}, \\
 (\Phi_s^{-1})^*g_e(s) &\;\mathrm{in}\; \{r_s\leq  s^{1/4}\}.
 \end{array}
 \right.
 \end{equation}

\subsection{Uniform almost conical behaviour.} \label{almost_cone}By the definition of $G_s$ it follows that there is $A$ such that
\begin{equation*}
\max_{\{r_s=1\}}  |  \riem(G_s)  |_{G_s} \leq A.
\end{equation*}

Let $\eta_0=\eta_0(g_N)$ be given by Theorem \ref{main_thm}. Then, choosing $\kappa$ small and $\Lambda_1$ large we obtain $(M_s,G_s)\in \mathcal M(\eta_0,\Lambda_1, s)$. For this, recall the computation (\ref{conv_to_cone}) and observe that, since $\Phi_s\circ F_s = \phi$ in $\{s^{1/4}\leq r_s <1\}$,
\begin{equation}\label{error}
\begin{aligned}
(\Phi_s&\circ F_{s})^* G_s-g_c=\\
&=\xi_3 (r_s/ s^{1/4})(F_{s}^* g_e(s)- g_c) + (1-\xi_3 (r_s/ s^{1/4})) (\phi^* g_Z - g_c),
\end{aligned}
\end{equation}
and that the support of $(\nabla^{g_c})^j \xi_3 (r_s/ s^{1/4})$, $j\geq 1$, is contained in $\{r_s\geq  s^{1/4}\}$.

\subsection{Taking the limit.}\label{limit} By Corollary \ref{corollary}, there exist $T,C_{\riem}>0$ such that for small $s$ the following hold for the Ricci flows $(h_s(t))_{t\in (0,T]}$ with $h_s(0)=G_s$:
\begin{align}
\max_{M_s} |\riem(h_s(t))|_{h_s(t)} &\leq \frac{C_{\riem}}{t},\, \mathrm{for}\; t\in (0,T], \label{5.curv_bound1}\\
\max_{M_s} \sum_{j=0}^2 r_s^{j+2} | (\nabla^{h_s(t)})^j \riem(h_s(t)) |_{h_s(t)} &\leq C_{\riem},\, \mathrm{for}\; t\in [0,T].\label{5.curv_bound2}
\end{align}
Moreover, 
\begin{equation*}
\vol_{h_s(t)} (B_{h_s(t)}(x,1))\geq v_0, \, \mathrm{for}\; t\in [0,T],
\end{equation*}
for some $x\in \{r_s=1\}$ due to (\ref{5.curv_bound2}).

Now, take any sequence $s_l\searrow 0$ and write $M_l=M_{s_l}$, $G_l=G_{s_l}$ and $h_l(t)=h_{s_l}(t)$. By Hamilton's compactness theorem applied to the sequence $(M_l,h_l(t))_{t\in [0,T]}$ we can obtain a compact and smooth Ricci flow $(M, g(t))_{t\in (0,T]}$ as a subsequential limit. Namely, there exist diffeomorphisms $H_l: M\rightarrow M_l$ such that 
\begin{equation}
H_l^* h_l(t) \rightarrow  g(t)\label{5.convergence}
\end{equation}
uniformly locally in $M\times (0,T]$ in the $C^\infty$ topology.

\subsection{The map $\Psi$.} Let $\tilde\Psi_l = H_l^{-1}\circ\Psi_l : Z_l :=Z_{s_l}\rightarrow M$. We will prove that there exists a map $\Psi:Z\setminus \{z_1\} \rightarrow M$, diffeomorphism onto its image, such that $\tilde\Psi_l$ converges to $\Psi$ in $C^\infty$ uniformly away from $z_1$. Since $M$ is compact and $Z_l\subset Z_{l+1}$ exhaust $Z\setminus \{z_1\}$, it suffices to obtain derivative estimates for $\tilde\Psi_l$ and $\tilde \Psi_l^{-1}$ with respect to fixed metrics on $Z\setminus \{z_1\}$ and $M$.

First, observe that around any  $ p\in Z_l$ and $\Psi_l(p)\in \mathrm{Im}\,(\Psi_l)$ there are local coordinates $\{x^k\}_{k=1,\ldots,n}$ and $\{y^k\}_{k=1,\ldots,n}$ respectively, so that
\begin{equation}\label{identity}
x^k=\Psi_l^* y^k,
\end{equation}
and
\begin{equation}\label{5.metric_c}
\begin{aligned}
2^{-1}\delta \leq g_Z &\leq 2\delta,&\quad 2^{-1}\delta \leq h_l(0) &\leq 2\delta,\\
\left|\frac{\partial^j (g_Z)_{pq}}{\partial x^{k_1}\cdots \partial x^{k_j}} \right| &\leq C_{j,l},&\quad \left|\frac{\partial^j h_l(0)_{pq}}{\partial y^{k_1}\cdots \partial y^{k_j}} \right| &\leq C_{j,l},
\end{aligned}
\end{equation}
for all $j$, since $\Psi_l^* h_l(0)=g_Z$ in $Z_l$, by \eqref{idata}. Here $\delta$ denotes the Euclidean metric in the corresponding coordinates.

Applying \eqref{idata}, Perelman's pseudolocality theorem and Shi's local derivative estimates to $(h_l(t))_{t\in [0,T)}$, as in the proof of Lemma \ref{conical_region}, together with the bound \eqref{5.curv_bound1}, we obtain the following: for every $l_0$ and any non-negative index $j$ there exist $C_{j,l_0}$ such that for $l\geq l_0$
\begin{eqnarray}\label{improved_curv_bound}
 |(\nabla^{h_l(t)})^j \riem(h_l(t))|_{h_l(t)} \leq C_{j,l_0},
\end{eqnarray}
in $\mathrm{Im}\,(\Psi_l|_{Z_{l_0}})\subset \{ r_l \geq 2 s_{l_0}^{1/4} \}$ and $t\in [0,T]$. Thus, in $\mathrm{Im}\,(\Psi_l|_{Z_{l_0}})$,
\begin{equation}\label{5.hl_control}
\begin{aligned}
Q_{l_0}^{-1}h_l(0)\leq h_l(T) \leq Q_{l_0} h_l&(0)\\
\left| \frac{\partial^j h_l(T)_{pq}}{ \partial y^{k_1}\cdots \partial y^{k_j}}  \right| \leq Q_{j,l_0}&, 
\end{aligned}
\end{equation}
for any $l\geq l_0$ and non-negative $j$.

Then  \eqref{identity}, \eqref{5.metric_c} and \eqref{5.hl_control} imply that
 \begin{equation*}
 \begin{aligned}
 \left| (\nabla^{g_Z, h_l(T)})^j \Psi_l |_{Z_{l_0}} \right|_{{g_Z, h_l(T)}}&\leq C'_{j,l_0} \\
 \left| (\nabla^{h_l(T), g_Z})^j \Psi_l^{-1} |_{\Psi_l(Z_{l_0})} \right|_{ h_l(T) ,g_Z}&\leq C'_{j,l_0},
 \end{aligned}
 \end{equation*}
 for any non-negative $j$.
 
   Finally, since $H_l^*h_l(T)\rightarrow g(T)$, we obtain 
  \begin{equation*}
  \begin{aligned}
  \left|(\nabla^{g_Z, g(T)})^j \tilde\Psi_l |_{Z_{l_0}} \right|_{g_Z, g(T)}&\leq C''_{j,l_0}\\
  \left|(\nabla^{g(T),g_Z})^j \tilde\Psi_l^{-1} |_{\tilde \Psi_l(Z_{l_0})} \right|_{g(T),g_Z}&\leq C''_{j,l_0},
  \end{aligned}
  \end{equation*}
 for any non-negative $j$. The existence of $\Psi$ follows from Arzel\`{a}--Ascoli.

\subsection{Curvature bounds for the limit.} Since $(M_l,h_l(t))_{t\in(0,T]}$ satisfy \eqref{5.curv_bound1}, it is clear that  $g(t)$ satisfies
\begin{equation}
|\riem(g(t))|_{g(t)}\leq \frac{C_{\riem}}{t}\label{typeI}
\end{equation}
on $M\times (0,T]$. 

Now, notice that $H_l^* r_l = (\Psi_l^{-1}\circ H_l)^* (\phi^{-1})^*r$ in $(H_l^{-1}\circ \Psi_l)(Z_l)$, by \eqref{radial_fcn}. By  $\tilde\Psi_l^{-1}\rightarrow \Psi^{-1}$ it follows that
\begin{equation}
H_l^* r_l \rightarrow (\Psi^{-1})^* [(\phi^{-1})^* r], \label{radial_convergence}
\end{equation}
in $C^\infty_{loc, g(T)}(\mathrm{Im}\, \Psi)$. Recall that $\phi$ parametrises the conical region in $Z$.
 
Let $r_M$ be the continuous function on $M$ defined as
\begin{equation*} 
r_M=\left\{\begin{array}{cc}
[(\Psi \circ  \phi)^{-1}]^* r& \mathrm{in}\; \mathrm{Im}\,(\Psi\circ \phi),\\
0 & \mathrm{in}\; (\mathrm{Im \Psi})^c,\\
1 & \mathrm{otherwise}.
\end{array}\right.
\end{equation*}
By \eqref{5.curv_bound2} and \eqref{radial_convergence} it follows that $g(t)$ satisfies
\begin{equation}
\max_M \sum_{j=0}^2 r_M^{j+2}| (\nabla^{g(t)})^j \riem(g(t))|_{g(t)}\leq C_{\riem}, \label{5.curv_bound3}
\end{equation}
in $M\times (0,T]$.

\subsection{Uniform convergence to the initial data, away from the singular point.} \label{smoothly_data} Observe that 
\begin{eqnarray}
\Psi_l^* h_l(t) &=&(\tilde\Psi_l)^*(H_l^* h_l(t)) \label{pullback}\\
\Psi_l^* h_l(0)&=&g_Z. \label{initial_condition}
\end{eqnarray}
Since $\tilde\Psi_l\rightarrow \Psi$ and $H_l^* h_l(t)\rightarrow g(t)$, \eqref{pullback} implies that $\Psi_l^* h_l(t) \rightarrow \Psi^*g(t)$. 

Finally, the curvature bound \eqref{improved_curv_bound} and relation \eqref{initial_condition} imply that $\Psi^*g(t)$ converges  to $g_Z$ as $t\rightarrow 0$, in $C^\infty_{loc}$.
 
\subsection{Closeness to expander improves in small scales.} We will need the following lemma regarding the flows $(M_s,h_s(t))_{t\in (0,T]}$. 

\begin{lemma}\label{close_to_expander}
 For every $\varepsilon>0$ and integer $k\geq 0$, there exist positive $\lambda_1(\varepsilon,k),s_2(\varepsilon,k)$ small and $\gamma_3(\varepsilon,k),\Lambda_2(\varepsilon,k)$ large such that the following holds. For each $s\in (0,s_2]$, $\gamma\geq\gamma_3$ and $t\in(0,\lambda_1(32\gamma)^{-1}]$ there is a map 
 $$Q_{s,t}: \Big\{ r_s \leq \frac{5}{4}\sqrt{\gamma t+s(\Lambda_2+1)^2} \Big\} \rightarrow N,$$
 diffeomorphism onto its image, such that for all non-negative integers $j\leq k$,
\begin{equation*}
(t+s)^{j/2}\big|(\nabla^{g_e(t+s)})^j [(Q_{s,t}^{-1})^* h_s(t) - g_e(t+s) ]\big|_{g_e(t+s)} <\varepsilon,
\end{equation*}
in $\mathrm{Im}\,Q_{s,t}$ and $ \big\{ \mathbf r_s\leq \sqrt{\gamma t}  \big\} \subset \mathrm{Im}\, Q_{s,t} \subset \big\{ \mathbf r_s\leq \frac{3}{2}\sqrt{\gamma t +s(\Lambda_2+1)^2}  \big\}$.
\end{lemma}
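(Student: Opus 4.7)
The plan is to parabolically rescale the Ricci flow $h_s(t)$ by a small factor $\lambda_1$, which will convert the initial metric from $\mathcal{M}(\eta_0,\Lambda_1,s)$ into the smaller class $\mathcal{M}(\eta_1,\Lambda_2,s/\lambda_1)$ required by Theorem \ref{main_thm}(2), and then translate the resulting estimate back through the scaling.

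First, given $\varepsilon>0$ and $k\geq 0$, I would take $\eta_1:=\eta_1(g_N,\varepsilon,k)$ and $\gamma_3:=\gamma_1(g_N,\varepsilon,k)$ from Theorem \ref{main_thm}(2), choose $\Lambda_2\geq\Lambda_0$ large enough that $k_{exp}(\Lambda_2)$ is small, $\lambda_1\in(0,1)$ small enough that $k_Z(\sqrt{\lambda_1})$ is small, and $s_2>0$ small enough with $s_2\leq[2(\Lambda_2+1)]^{-4}$, $s_2/\lambda_1\leq s_0(\Lambda_2)$, and $k_{exp}(s_2^{-1/4})$ small. For $s\in(0,s_2]$ set $\tilde s:=s/\lambda_1$ and consider the rescaled Ricci flow $\bar h_s(\tau):=\lambda_1^{-1}h_s(\lambda_1\tau)$, $\tau\in(0,T/\lambda_1]$, with initial datum $\lambda_1^{-1}G_s$. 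The one-parameter-group identity $\varphi_u\circ\varphi_v=\varphi_{uv}$ yields $\varphi_{1/\lambda_1}^*(\lambda_1^{-1}g_e(s))=g_e(\tilde s)$, so setting $\tilde\Phi_{\tilde s}:=\Phi_s\circ\varphi_{1/\lambda_1}:\{\mathbf r_{\tilde s}\leq 1\}\to M_s$ gives $\tilde\Phi_{\tilde s}^*(\lambda_1^{-1}G_s)=g_e(\tilde s)$ on the expanding part of $(M_s,G_s)$.

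The main technical step is to verify $(M_s,\lambda_1^{-1}G_s)\in\mathcal{M}(\eta_1,\Lambda_2,\tilde s)$ via this $\tilde\Phi_{\tilde s}$. Since $\mathbf r_s\circ\varphi_{1/\lambda_1}=\sqrt{\lambda_1}\,\mathbf r_{\tilde s}$, the set $\{\mathbf r_{\tilde s}\leq 2(\Lambda_2+1)\sqrt{\tilde s}\}$ lifts into $\{\mathbf r_s\leq 2(\Lambda_2+1)\sqrt s\}\subset\{\mathbf r_s\leq s^{1/4}\}$ by the choice of $s_2$, so the second inequality of Definition \ref{almost_conical} is trivially zero. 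For the conical inequality I would use $\varphi_{1/\lambda_1}\circ F_{\tilde s}=F_s\circ b_{\lambda_1}$ with $b_\lambda(r,q):=(r\sqrt{\lambda},q)$, and the derived $\lambda_1^{-1}b_{\lambda_1}^*F_s^*g_e(s)=F_{\tilde s}^*g_e(\tilde s)$, together with \eqref{error}, to write
\begin{equation*}
(\tilde\Phi_{\tilde s}\circ F_{\tilde s})^*(\lambda_1^{-1}G_s)-g_c = \tilde\xi\,(F_{\tilde s}^*g_e(\tilde s)-g_c) + (1-\tilde\xi)\,\lambda_1^{-1}b_{\lambda_1}^*(\phi^*g_Z-g_c),
\end{equation*}
where $\tilde\xi(r)=\xi_3(r\sqrt{\lambda_1}/s^{1/4})$. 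The first summand is bounded in $C^4_{g_c}$ by $k_{exp}(\Lambda_2)$ on $[\Lambda_2\sqrt{\tilde s},1]\times X$ via \eqref{conv_to_cone}. For the second, the scaling $b_\lambda^*g_c=\lambda g_c$ gives $r^j|(\nabla^{g_c})^j(\lambda_1^{-1}b_{\lambda_1}^*T)|_{g_c}(r,q)=(r\sqrt{\lambda_1})^j|(\nabla^{g_c})^jT|_{g_c}(r\sqrt{\lambda_1},q)\leq k_Z(\sqrt{\lambda_1})$ for $T=\phi^*g_Z-g_c$ and $r\leq 1$. The cross-terms from derivatives of $\tilde\xi$ are supported where $r/\sqrt{\tilde s}\geq s^{-1/4}$, where they are bounded by $k_{exp}(s^{-1/4})$. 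All contributions are below $\eta_1$ by the chosen parameters.

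Finally, Theorem \ref{main_thm}(2) applied to $(\bar h_s(\tau))$ yields $\bar Q_{\tilde s,\tau}$ for $\tau\in(0,(32\gamma)^{-1}]$ with $\gamma\geq\gamma_3$, and I would set $Q_{s,t}:=\varphi_{1/\lambda_1}\circ\bar Q_{\tilde s,t/\lambda_1}$ for $t\in(0,\lambda_1(32\gamma)^{-1}]$. The scaling identities $\tau+\tilde s=(t+s)/\lambda_1$, $g_e(\tau+\tilde s)=\lambda_1^{-1}\varphi_{1/\lambda_1}^*g_e(t+s)$, and $\bar h_s(\tau)=\lambda_1^{-1}h_s(t)$, together with the tensor-norm scaling $|T|_{cg}=c^{-r/2}|T|_g$ for $(0,r)$-tensors, cause the factors $\lambda_1^{-j/2}$ from $(\tau+\tilde s)^{j/2}$ and $\lambda_1^{j/2}$ from the pulled-back norm to cancel exactly, delivering the target estimate on $\mathrm{Im}\,Q_{s,t}$. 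The domain equality $\mathrm{Dom}\,Q_{s,t}=\{r_s\leq\tfrac{5}{4}\sqrt{\gamma t+s(\Lambda_2+1)^2}\}$ and the image containment follow from $\mathbf r_s\circ\varphi_{1/\lambda_1}=\sqrt{\lambda_1}\,\mathbf r_{\tilde s}$ and the fact that the cap $\Lambda_1\sqrt s$ in $r_s$ lies below $\sqrt{\lambda_1}\Lambda_2\sqrt{\tilde s}=\Lambda_2\sqrt s$. The main obstacle will be the bookkeeping of these rescalings, ensuring in particular that every error in the conical inequality is absorbed by the small constants $k_Z(\sqrt{\lambda_1})$, $k_{exp}(\Lambda_2)$, and $k_{exp}(s^{-1/4})$.
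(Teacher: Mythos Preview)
Your proposal is correct and follows essentially the same route as the paper: parabolically rescale by $\lambda_1$, verify that $(M_s,\lambda_1^{-1}G_s)\in\mathcal M(\eta_1,\Lambda_2,s/\lambda_1)$ via the map $\Phi_s\circ\varphi_{1/\lambda_1}$, apply Theorem \ref{main_thm}(2) to the rescaled flow, and undo the scaling. Your write-up is in fact more explicit than the paper's about the bookkeeping --- the identity $\varphi_{1/\lambda_1}\circ F_{\tilde s}=F_s\circ b_{\lambda_1}$, the separate treatment of the cut-off cross terms controlled by $k_{exp}(s^{-1/4})$, the condition $s_2/\lambda_1\leq s_0(\Lambda_2)$, and the final definition $Q_{s,t}=\varphi_{1/\lambda_1}\circ\bar Q_{\tilde s,t/\lambda_1}$ --- all of which the paper either states tersely or leaves implicit.
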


\begin{remark}\label{lambda_Gamma} In the above statement we can assume w.l.o.g that $\gamma_3(\varepsilon,k)\geq(\Lambda_2(\varepsilon,k)+1)^2$.
\end{remark}

 \begin{proof} Given any $\varepsilon>0$, let $\eta_1=\eta_1(g_N,\varepsilon,k)$ be the constant provided by Theorem \ref{main_thm}. 

Since $\lim_{r\rightarrow 0} k_Z(r)=\lim_{r\rightarrow +\infty} k_{exp}(r)=0$, there are $\lambda_1(\varepsilon)>0$ small and $\Lambda_2(\varepsilon)>0$ large such that
\begin{equation*}
\begin{aligned}
k_Z(r) &<\eta_1,\; \mathrm{for}\; r\in (0,\lambda_1^{1/2}],\\
k_{exp}(r) &< \eta_1,\; \mathrm{for}\; r\geq \Lambda_2.
\end{aligned}
\end{equation*}
Moreover, set $s_2(\varepsilon,k)=\min\left\{ 2^{-4} \lambda_1^2,\big(2(\Lambda_2+1)\big)^{-4}\right\}$, we have   $$2(\Lambda_2 +1)\sqrt{s}\leq s^{1/4} < 2 s^{1/4}\leq \lambda_1^{1/2},$$
for every $s\in(0,s_2]$.

By construction of $(M_s,G_s)$ it follows that $(M_s,\lambda_1^{-1}G_s)\in \mathcal M(\eta_1,\Lambda_2,s/\lambda_1)$ for any  $s\in (0,s_2]$, with associated map $\Phi_{s/\lambda_1}= \Phi_s\circ \varphi_{\lambda_1^{-1}}$ and  function $r_{s/\lambda_1}=\max\{\Lambda_2\sqrt{s/\lambda_1},\min\{\lambda_1^{-1/2} r_s,1\} \}$. 

Theorem \ref{main_thm} implies that there is $\gamma_1>1$ such that for every $\gamma\geq \gamma_1$ and $\tau\in(0, (32\gamma)^{-1}]$ the metric $\lambda_1^{-1}h_s(\lambda_1\tau)$ is $\varepsilon$-close to  $g_e(\tau+s/\lambda_1)$ in 
\begin{equation*}
\Big\{ r_{s/\lambda_1}\leq \sqrt{\gamma \tau+ s \Lambda_2^2/\lambda_1}\Big\}= \Big\{ r_s \leq \sqrt{\gamma \lambda_1 \tau + s \Lambda_2^2}\Big\}.
\end{equation*}
 
 Then, for every $t\in (0,\lambda_1 (32\gamma)^{-1}]$  apply the above for $\tau=t/\lambda_1$ to prove the lemma for $\gamma_3=\gamma_1$.
 \end{proof}
 
\subsection{Diameter control of high curvature region of $g(t)$.} We will prove the following lemma.
\begin{lemma}[High curvature-small diameter]\label{diam_est}
There exists $c_0>0$ with the following property: for small $\zeta>0$ there exists $C_\zeta>0$ such that if $t\in(0,c_0 \zeta]$ then
\begin{eqnarray*}
\diam_{g(t)}\big(\{r_M\leq \sqrt{\gamma t} \}\big) &\leq& C_\zeta \sqrt t, \\
 |\riem(g(t))|_{g(t)}&<&\frac{\zeta}{t}\ \ \ \mathrm{in}\; \{r_M>\sqrt{\gamma t}\},
\end{eqnarray*}
where $C_\zeta=C(g_N)C_{\riem}^{1/2}\zeta^{-1/2}$ and $\gamma=C_{\riem}\zeta^{-1}$.
\end{lemma}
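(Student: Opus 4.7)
The plan is to treat the two claims essentially independently. The curvature estimate in $\{r_M > \sqrt{\gamma t}\}$ is immediate from the scale-invariant bound $r_M^2|\riem(g(t))|_{g(t)} \leq C_{\riem}$ of \eqref{5.curv_bound3}: with $\gamma = C_{\riem}/\zeta$, any point $x$ with $r_M(x) > \sqrt{\gamma t}$ satisfies $|\riem(g(t))|_{g(t)}(x) \leq C_{\riem}/r_M(x)^2 < \zeta/t$.

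For the diameter bound, the strategy is to work on the approximating sequence $(M_{s_l}, h_{s_l}(t))$ from Section~\ref{limit}, apply Lemma~\ref{close_to_expander}, and pass to the limit. I would fix $\varepsilon = \tfrac{1}{2}$ and $k = 0$ in Lemma~\ref{close_to_expander} to obtain constants $\lambda_1, s_2, \gamma_3, \Lambda_2$, set $c_0 := \lambda_1/(32 C_{\riem})$, and shrink $\zeta$ so that $\gamma := C_{\riem}/\zeta \geq \gamma_3$. Then for $t \in (0, c_0\zeta]$ and all $l$ large, the lemma furnishes a diffeomorphism $Q_{s_l,t}$ onto a subset of $\{\mathbf{r}_{s_l} \leq \tfrac{3}{2}\sqrt{\gamma t + s_l(\Lambda_2+1)^2}\}$ with $(Q_{s_l,t}^{-1})^* h_{s_l}(t)$ within $\tfrac{1}{2}$ of $g_e(t+s_l)$.

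The key intermediate step is a uniform-in-$s_l$ diameter bound
\begin{equation*}
\diam_{g_e(t+s_l)}(\mathrm{Im}\, Q_{s_l,t}) \leq C(g_N) \sqrt{\gamma t}.
\end{equation*}
On the asymptotically conical annulus $\{\mathbf{r}_{s_l} \geq \sqrt{\gamma_0 t + s_l\Lambda_0^2}\}$, the self-similar identities $(\varphi_s^{-1})^* g_e(s\tau) = s\, g_e(\tau)$ and $(\varphi_s^{-1})^*\mathbf{r}_s = \sqrt{s}\,\mathbf{r}$, derivable from Section~\ref{asymp_conical_exp}, transfer the gradient estimate of Lemma~\ref{exp_con_est} to $|\nabla^{g_e(t+s_l)} \mathbf{r}_{s_l}|_{g_e(t+s_l)} \in [1/2,2]$, while $F_{s_l}^* g_e(t+s_l)$ still approximates $g_c$ by an analogous computation. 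Together these control the diameter of this annular part by $C(g_N) \sqrt{\gamma t}$. The complementary interior set $\{\mathbf{r}_{s_l} \leq \sqrt{\gamma_0 t + s_l\Lambda_0^2}\}$ has $g_e(t+s_l)$-diameter bounded by $C(g_N)\sqrt{t+s_l}$, via distance distortion \eqref{dist_dist} on the type~III expander flow combined with the same scaling. Since $\sqrt{t+s_l} \leq \sqrt{\gamma t}$ for $\gamma \geq 1$ and $s_l \leq t$, summing the two contributions gives the claim.

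To pass to the limit I would use the $C^0$ convergence $H_l^* r_l \to r_M$ on $M$, which follows from \eqref{radial_convergence} on $\mathrm{Im}\,\Psi$ together with the crude estimate $r_l \leq s_l^{1/4}$ on $M_l \setminus \mathrm{Im}\,\Psi_l$. For any $\delta > 0$ and $l$ large, $H_l(\{r_M \leq \sqrt{\gamma t}\}) \subset \{r_{s_l} \leq \sqrt{\gamma t} + \delta\}$, which lies in the domain of $Q_{s_l,t}$; combining the $\tfrac{1}{2}$-closeness to the expander with the uniform diameter bound yields $\diam_{h_{s_l}(t)}(H_l(\{r_M \leq \sqrt{\gamma t}\})) \leq 2 C(g_N)\sqrt{\gamma t}$. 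The $C^\infty$ convergence $H_l^* h_l(t) \to g(t)$ of \eqref{5.convergence} then transfers this to $\diam_{g(t)}(\{r_M \leq \sqrt{\gamma t}\}) \leq C(g_N) C_{\riem}^{1/2}\zeta^{-1/2}\sqrt{t}$. The main technical obstacle I foresee is the uniform-in-$s_l$ expander diameter estimate, where one must carefully separate the \emph{conical} and \emph{core} parts and track the dependence on the rescaling parameter $s_l$; the remaining steps are bookkeeping given Lemma~\ref{close_to_expander} and the compactness argument of Section~\ref{limit}.
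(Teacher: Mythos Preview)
Your overall strategy matches the paper: the curvature bound via \eqref{5.curv_bound3} is identical, and the route ``apply Lemma~\ref{close_to_expander} on the approximants, bound the diameter on the expander, pass to the limit'' is exactly what the authors do. The only substantive difference is how you estimate the $g_e(t+s_l)$-diameter of $\{\mathbf r_{s_l}\leq \tfrac{3}{2}\sqrt{\gamma t+s_l(\Lambda_2+1)^2}\}$.

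There is a genuine gap in your core estimate. The inequality \eqref{dist_dist} reads $d_{g_0(0)}(x,y)-C\sqrt t\leq d_{g_0(t)}(x,y)$, i.e.\ it is a \emph{lower} bound on evolved distances and therefore gives no upper bound on diameters. What you actually need for $\{\mathbf r_{s_l}\leq \sqrt{\gamma_0 t+s_l\Lambda_0^2}\}$ is the opposite direction. One fix is to invoke $\ric(g_0(t))\geq 0$ (positive curvature operator) so that $d_{g_0(t)}\leq d_{g_N}$; after scaling this gives the bound $C(g_N)\sqrt{\gamma_0 t+s_l\Lambda_0^2}$, which suffices. The paper takes a different, more robust route: it does not split into annulus and core at all, but rewrites
\[
\diam_{g_e(t+s_l)}\big(\{\mathbf r_{s_l}\leq \tfrac32\sqrt{\gamma t+s_l(\Lambda_2+1)^2}\}\big)=\sqrt{t+s_l}\;\diam_{g_N}\big(\varphi_{1+t/s_l}(\{\mathbf r\leq \tfrac32\sqrt{\gamma t/s_l+(\Lambda_2+1)^2}\})\big)
\]
and then proves an ODE lemma (Lemma~\ref{exp}) on the soliton potential showing $\varphi_{1+u}\big(\{\mathbf r\leq \tfrac32\sqrt{\gamma u+(\Lambda+1)^2}\}\big)\subset\{\mathbf r\leq \sqrt{8\gamma}\}$ for all $u\geq 0$. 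This containment in a \emph{fixed} sublevel set of $\mathbf r$ (independent of $u$) is the key point, and it uses only the identity $|\nabla f|^2=f+R_{inf}-R$, not the sign of the curvature. Your decomposition is workable once the core is handled correctly, but it leans on $\ric\geq 0$; the paper's argument is curvature-sign-free, which is why it transfers without modification to the perturbative setting of Theorem~\ref{perturbation_thm}.
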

\begin{proof}
Fix $\varepsilon=10^{-2}$. By \eqref{5.curv_bound3} and putting $k=0$ in Lemma \ref{close_to_expander} we can find $\Lambda_2,\lambda_1$  such that, if $\gamma=C_{\riem}\zeta^{-1}$ and $\zeta$ is small then:
\begin{itemize}
\item For large $l$ and  each $t\in (0,\lambda_1(32\gamma)^{-1}]$ there exists 
$$Q_{l,t}: \Big\{r_l\leq \frac{5}{4}\sqrt{\gamma t+s_l(\Lambda_2+1)^2}\Big\}\rightarrow N$$ satisfying 
\begin{equation*}
\big|(Q_{l,t}^{-1})^* h_l(t) -g_e(t+s_l)\big|_{g_e(t+s_l)}< 10^{-2} 
\end{equation*}
in $\mathrm{Im}\, Q_{l,t}\subset \big\{\mathbf r_l\leq \frac{3}{2}\sqrt{\gamma t+s_l(\Lambda_2+1)^2}\big\}$.\\
\item $|\riem(g(t))|_{g(t)}\leq \frac{C_{\riem}}{r_M^2} < \frac{C_{\riem}}{\gamma t}=\frac{\zeta}{t}$ in $ \{r_M>\sqrt{\gamma t}\}$ provided that $t\in (0,\lambda_1(32\gamma)^{-1}]$.
\end{itemize}

By the closeness to the expander we obtain:
  \begin{equation*}
\begin{aligned}
\diam_{h_l(t)}&\big( \big\{r_l\leq \sqrt{\gamma t+s_l(\Lambda_2+1)^2}\big\}\big) \leq \diam_{(Q_{l,t}^{-1})^* h_l(t)}(\mathrm{Im}\,Q_{l,t})\\
&\leq (1.01)^{1/2} \diam_{g_e(t+s_l)}(\mathrm{Im}\, Q_{l,t})\\
&\leq (1.01)^{1/2} \diam_{g_e(t+s_l)}\Big( \Big\{\mathbf r_l\leq \frac{3}{2}\sqrt{\gamma t+s_l(\Lambda_2+1)^2}\Big\}\Big).
\end{aligned}
\end{equation*}

Working on the expander we compute, using Lemma \ref{exp} below for the last inequality,
\begin{equation}\label{work_on_expander}
\begin{aligned}
\diam&_{g_e(t+s_l)}\Big( \Big\{\mathbf r_l \leq \frac{3}{2}\sqrt{\gamma t+s_l(\Lambda_2+1)^2} \Big\}\Big)\\
&=\sqrt{t+s_l}\diam_{\varphi_{t+s_l}^*g_N} \Big(   \Big\{\mathbf r_l \leq \frac{3}{2}\sqrt{\gamma t+s_l(\Lambda_2+1)^2} \Big\} \Big )\\
&=\sqrt{t+s_l}\diam_{\varphi_{t+s_l}^*g_N} \Big( \varphi_{s_l}^{-1} \Big( \Big\{\mathbf r \leq \frac{3}{2}\sqrt{\gamma \frac{t}{s_l}+(\Lambda_2+1)^2} \Big\}\Big )\Big)\\
&= \sqrt{t+s_l} \diam_{g_N} \Big( \varphi_{1+\frac{t}{s_l}}\Big( \Big\{ \mathbf r\leq \frac{3}{2} \sqrt{\gamma \frac{t}{s_l} +(\Lambda_2+1)^2 } \Big\} \Big )  \Big  )\\
&\leq  C_\zeta\sqrt{t+s_l},
\end{aligned}
\end{equation}
where $C_\zeta= C(g_N)C_{\riem}^{1/2} \zeta^{-1/2}$. 

Now note that
\begin{equation}
\begin{aligned}
\diam&_{h_l(t)}\big(\big\{ r_l \leq \sqrt{\gamma t+s_l(\Lambda_2+1)^2}    \big\}\big)\\
&=\diam_{H_l^*h_l(t)}\big(\big\{ H_l^* r_l \leq \sqrt{\gamma t+s_l(\Lambda_2+1)^2}    \big\}\big)\\
&=\diam_{H_l^*h_l(t)}\big(\big\{ H_l^* (\Psi_l^{-1})^* (\phi^{-1})^* r \leq \sqrt{\gamma t+s_l(\Lambda_2+1)^2}    \big\}\big)\\
&=\diam_{H_l^*h_l(t)}\big(\big\{  (\Psi_l^{-1}\circ H_l)^* (\phi^{-1})^* r \leq \sqrt{\gamma t+s_l(\Lambda_2+1)^2}    \big\}\big),
\end{aligned}
\end{equation}
where we also used \eqref{radial_fcn}.

Since $H_l^*h_l(t)\rightarrow g(t)$ and $\Psi_l^{-1}\circ H_l \rightarrow \Psi^{-1}$, it follows that
\begin{equation*}
\diam_{g(t)}(\{ r_M\leq \sqrt{\gamma t} \} ) \leq C_\zeta \sqrt{t}.
\end{equation*}

\end{proof}

\begin{lemma}\label{exp}
Let $(N,g_N, f)$ be a gradient Ricci expander with bounded curvature. Denote by $R_{inf}, R_{sup}$ the infimum and supremum of the scalar curvature respectively and suppose $f$ is normalised so that $|\nabla f|^2=f+R_{inf}-R$. Let $\mathbf r=2\sqrt{f}$ and $\varphi_{1+u}$ be the associated family of diffeomorphisms. Then, if $\gamma \geq  (\Lambda+1)^2 \geq 32 (R_{sup}-R_{inf})$, then  
\begin{equation}
\varphi_{1+u}\Big(  \Big\{  \mathbf r\leq \frac{3}{2}\sqrt{\gamma u +  (\Lambda+1)^2}  \Big\} \Big) \subset \big\{ \mathbf r\leq \sqrt{8\gamma}\big\},\label{exp_one}
\end{equation}
for all $u\geq 0$ and
\begin{equation}
\varphi_{1+u}\Big( \Big\{ \mathbf r\leq \frac{1}{2}\sqrt{\gamma u+(\Lambda+1)^2} \Big\}   \Big)\supset \big\{ \mathbf r \leq \sqrt{ \gamma/8}\big\}\label{exp_two},
\end{equation}
for $u\geq 1$.
\end{lemma}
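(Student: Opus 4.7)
The plan is to reduce both inclusions to a two-sided comparison estimate for $f\circ\varphi_{1+u}$ obtained from the soliton identity $|\nabla f|^2 = f + R_{inf} - R$. The starting point is to differentiate $f$ along the flow: using the ODE $\tfrac{d}{dt}\varphi_t = -t^{-1}\nabla f \circ \varphi_t$, if $F(t) := f(\varphi_t(x))$ and $S := R_{sup}-R_{inf}$, then
\begin{equation*}
F'(t) = -\frac{1}{t}\bigl(F(t) + R_{inf} - R(\varphi_t(x))\bigr),
\end{equation*}
and the bound $0 \le R - R_{inf} \le S$ gives $-F/t \le F'(t) \le (-F+S)/t$. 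Multiplying by $t$, this reads $0 \le (tF)' \le S$, so integrating from $1$ to $1+u$,
\begin{equation*}
F(1) \le (1+u)F(1+u) \le F(1) + Su.
\end{equation*}
Since $\mathbf r^2 = 4f$, this translates into the key two-sided estimate
\begin{equation*}
\frac{\mathbf r(x)^2}{1+u} \;\le\; \mathbf r(\varphi_{1+u}(x))^2 \;\le\; \frac{\mathbf r(x)^2 + 4Su}{1+u}. \tag{$\star$}
\end{equation*}

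For \eqref{exp_one}, I would fix $x$ with $\mathbf r(x)^2 \le \tfrac94\bigl(\gamma u+(\Lambda+1)^2\bigr)$ and plug into the upper bound of $(\star)$. Using the hypotheses $(\Lambda+1)^2\le \gamma$ and $4S \le (\Lambda+1)^2/8 \le \gamma/8$, the numerator is bounded by $\tfrac94\gamma u + \tfrac94\gamma + \tfrac{\gamma u}{8}$, which after dividing by $1+u$ is at most $8\gamma$ with room to spare. For \eqref{exp_two}, I would use that $\varphi_{1+u}$ is a diffeomorphism of $N$ (the flow is complete on $(0,\infty)$ by construction, cf.\ Section~\ref{asymp_conical_exp}), so given $y$ with $\mathbf r(y)^2 \le \gamma/8$ there is a unique preimage $x = \varphi_{1+u}^{-1}(y)$, and the lower bound of $(\star)$ applied at $x$ gives $\mathbf r(x)^2 \le (1+u)\gamma/8$. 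The desired inclusion then amounts to $(1+u)\gamma/8 \le \tfrac14\bigl(\gamma u+(\Lambda+1)^2\bigr)$, i.e. $(1-u)\gamma \le 2(\Lambda+1)^2$, which is automatic once $u \ge 1$.

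There is no genuine obstacle here; the proof is essentially a one-variable ODE comparison followed by two routine algebraic checks. The only thing to be mindful of is to use the upper comparison for \eqref{exp_one} and the lower comparison for \eqref{exp_two}, and to verify that the normalisation $|\nabla f|^2 = f+R_{inf}-R$ is the one that makes the $Su$ correction term in $(\star)$ controlled by the standing assumption $(\Lambda+1)^2 \ge 32S$.
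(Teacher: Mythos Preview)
Your proof is correct and follows the same underlying strategy as the paper: differentiate $f\circ\varphi_t$ along the flow, use the soliton identity $|\nabla f|^2 = f + R_{inf} - R$ to obtain two-sided differential inequalities, and integrate. Your execution is in fact a bit cleaner than the paper's: by rewriting the differential inequalities as $0 \le (tF)' \le S$ you obtain the two-sided bound $(\star)$ in one stroke, whereas the paper integrates the upper inequality in the Gr\"onwall form $f\circ\varphi_{1+u}(x) \le \tfrac{1}{1+u}(f(x)-C(g_N)) + C(g_N)$ and then spends several lines checking, via the lower bound, that $f\circ\varphi_{1+u}(x) \ge C(g_N)$ along the way---a condition your argument shows is not actually needed.
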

\begin{proof}
First note that the normalisation of $f$ implies that 
$$f=|\nabla f|^2 +R-R_{inf}\geq 0,$$
and $f>0$ away from the critical points of $f$.

By \eqref{diffeo_evol} it follows that 
\begin{equation}\label{ODE1}
\frac{d}{du} f\circ \varphi_{1+u} =-\frac{1}{1+u}|\nabla f|^2\circ \varphi_{1+u}.
\end{equation}
In order to prove \eqref{exp_two}  note that, since  $|\nabla f|^2 = f+R_{inf}-R\leq f$, \eqref{ODE1} becomes
\begin{equation*}
\frac{d}{du} f\circ \varphi_{1+u} \geq -\frac{1}{1+u} f\circ \varphi_{1+u}.
\end{equation*}
Integrating this inequality we immediately obtain that 
\begin{equation}\label{lbound}
f\circ \varphi_{1+u}(x) \geq \frac{f(x)}{1+u},
\end{equation}
for all $x\in N$ with $\nabla f (x)\not=0$ and $u\geq 0$. 

Thus, if  $x$ is such that $\mathbf r(x) \geq \frac{1}{2}\sqrt{\gamma u +(\Lambda+1)^2}$ it follows that
\begin{equation}
\mathbf r(\varphi_{1+u}(x))\geq \frac{1}{2\sqrt{2}}(\Lambda+1), \label{bu1}
\end{equation}
for $0\leq u\leq1$ and
\begin{equation}
\mathbf r(\varphi_{1+u}(x))\geq \sqrt{\gamma/8}, \label{bu2}
\end{equation}
 for $u\geq 1$, which proves \eqref{exp_two}.

On the other hand $|\nabla f|^2\geq f-C(g_N)$, where $C(g_N)=R_{sup} - R_{inf}>0$, hence \eqref{ODE1} becomes
\begin{equation*}
\frac{d}{du} f\circ \varphi_{1+u} \leq -\frac{1}{1+u} (f-C(g_N))\circ \varphi_{1+u}.
\end{equation*}
Hence,  as long as $f\circ \varphi_{1+u}(x) \geq C(g_N)$,  $f\circ \varphi_{1+u}(x)$ is non-increasing  in  $u$ and 
\begin{equation}\label{ubound}
f\circ\varphi_{1+u}(x)  \leq \frac{1}{1+u} (f(x) - C(g_N)) +C(g_N).
\end{equation}
Thus, if $x$ is such that $\mathbf r(x)=\frac{3}{2} \sqrt{\gamma u +(\Lambda+1)^2}$ and $\gamma \geq (\Lambda+1)^2\geq 32 C(g_N)$, by  \eqref{bu1} and \eqref{bu2}:
$$ f\circ \varphi_{1+u}(x) \geq 
\left\{ 
\begin{array}{ll}
\gamma/32, &\textrm{if $u\geq 1$} \\
\left(\Lambda+1\right)^2/32, &\textrm{if $0\leq u \leq 1$}
\end{array}
\right.
\geq C(g_N)$$
for $u\geq 0$. Hence, by \eqref{ubound} and $\gamma \geq (\Lambda+1)^2\geq 32 C(g_N)$,
\begin{eqnarray*}
f\circ \varphi_{1+u}(x) &\leq& \frac{1}{1+u} f(x) +C(g_N) \\
&\leq& \frac{9}{16} (\gamma +(\Lambda+1)^2 ) +C(g_N)\\
&\leq& 2\gamma.
\end{eqnarray*}

It follows that
$\mathbf r\circ \varphi_{1+u}(x) \leq \sqrt{8\gamma},$
which proves \eqref{exp_one}.
\end{proof}

\subsection{Gromov--Hausdorff convergence to the initial data.}\label{GH} In this section we prove that for every $\varepsilon>0$ the map $\Psi: Z\setminus \{z_1\} \rightarrow M$ is an $\varepsilon$-isometry  between $(Z\setminus\{ z_1\},d_Z)$ and $(M,g(t))$ for small $t$, which implies that $(M,d_{g(t)})$ converges to $(Z,d_Z)$ in the Gromov--Hausdorff sense as $t\rightarrow 0$. 

The result follows immediately from the following two lemmata:

\begin{lemma}[Distortion estimate] \label{distortion}
For every $\varepsilon>0$  there exists $\delta_1, t_1>0$, such that the map
\begin{equation}
\Psi: \{r\geq \delta_1 \} \rightarrow \{r_M\geq \delta_1\},
\end{equation}
satisfies 
\begin{equation}
\sup \left\{ |d_{g(t)}(\Psi(z_1),\Psi(z_2)) - d_Z(z_1,z_2)|,\; z_1,z_2\in \{r\geq \delta_1 \}  \right\}<3\varepsilon,
\end{equation}
for every $t\in (0,t_1]$, and $\diam(\{r\leq\delta_1\})<\varepsilon$.
\end{lemma}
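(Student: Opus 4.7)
The plan is to combine three ingredients: the smooth convergence $\Psi^*g(t)\to g_Z$ on $\{r\geq \delta_1\}$ established in Section \ref{smoothly_data}, the $g(t)$-diameter bound for the high-curvature region from Lemma \ref{diam_est}, and the conical structure of $g_Z$ at $z_1$. The bound $\diam(\{r\leq \delta_1\})<\varepsilon$ follows directly from \eqref{5.conical}: on $\phi((0,\delta_1]\times \mathbb{S}^{n-1})$ the metric $g_Z$ is uniformly close to $dr^2+r^2 g_1$, whose diameter on this set is at most $(2+\diam_{g_1}(\mathbb{S}^{n-1}))\delta_1$, which is smaller than $\varepsilon$ for $\delta_1$ small. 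Combining this with Lemma \ref{diam_est} applied to $\{r_M\leq \sqrt{\gamma t}\}$ and smooth convergence on the annulus $\{\sqrt{\gamma t}\leq r_M\leq \delta_1\}$ also yields $\diam_{g(t)}(\{r_M\leq \delta_1\})<\varepsilon$ for $t$ sufficiently small; this ancillary bound is needed for the lower-bound argument below.

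For the upper bound $d_{g(t)}(\Psi(z_1),\Psi(z_2))\leq d_Z(z_1,z_2)+3\varepsilon/2$, I would take a path $\gamma$ in $Z\setminus \{z_1\}$ from $z_1$ to $z_2$ of $g_Z$-length at most $d_Z(z_1,z_2)+\varepsilon/2$. If $\gamma$ enters $\{r\leq \delta_1/2\}$, let $p$ denote its first entry and $q$ its last exit on $\{r=\delta_1/2\}$, and replace $\gamma|_{[p,q]}$ by a short arc on $\{r=\delta_1/2\}$ from $p$ to $q$ of $g_Z$-length at most $\varepsilon/2$, which is possible since the cross-section $\{r=\delta_1/2\}$ has small $g_Z$-diameter by the conical comparison. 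The modified path lies in $\{r\geq \delta_1/2\}$ with $g_Z$-length at most $d_Z(z_1,z_2)+\varepsilon$. Pushing it forward by $\Psi$ and using smooth convergence on the compact set $\{r\geq \delta_1/2\}$ gives a curve in $M$ of $g(t)$-length at most $d_Z(z_1,z_2)+3\varepsilon/2$ for $t$ sufficiently small.

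For the lower bound, let $\tilde\gamma$ be a minimizing $g(t)$-geodesic from $\Psi(z_1)$ to $\Psi(z_2)$, and write $L=d_{g(t)}(\Psi(z_1),\Psi(z_2))$. If $\tilde\gamma$ enters $\{r_M\leq \delta_1/2\}$, let $p$ be its first point on $\{r_M=\delta_1/2\}$ and $q$ its last; the outermost sub-arcs $\tilde\gamma|_{[\Psi(z_1),p]}$ and $\tilde\gamma|_{[q,\Psi(z_2)]}$ lie in $\{r_M\geq \delta_1/2\}$, while the middle arc contributes a non-negative length that I simply discard. By smooth convergence, the $g(t)$-lengths of the two outer arcs are at least $(1-\kappa(t))$ times the $g_Z$-lengths of their images under $\Psi^{-1}$, hence bound $(1-\kappa(t))d_Z(z_1,\Psi^{-1}(p))$ and $(1-\kappa(t))d_Z(\Psi^{-1}(q),z_2)$ respectively, with $\kappa(t)\to 0$ as $t\to 0$. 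This yields
\begin{equation*}
L\geq (1-\kappa(t))\big[d_Z(z_1,\Psi^{-1}(p))+d_Z(\Psi^{-1}(q),z_2)\big].
\end{equation*}
Since $\Psi^{-1}(p),\Psi^{-1}(q)\in \{r\leq \delta_1\}$, which has $g_Z$-diameter less than $\varepsilon$, the triangle inequality gives $d_Z(z_1,z_2)\leq (1+2\kappa(t))L+\varepsilon$, and since $L$ is uniformly bounded in $t$ (via the upper bound above), the term $2\kappa(t)L$ is absorbed into $\varepsilon$ for $t$ small, completing the argument.

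The main subtlety is that $\tilde\gamma$ may cross the level set $\{r_M=\delta_1/2\}$ many times, but this is bypassed by keeping only the two outermost crossings $p,q$ and discarding the intermediate portion of $\tilde\gamma$, whose non-negative length contribution is exactly what is needed for the lower-bound direction. A symmetric simplification handles the trivial case where $\tilde\gamma$ does not enter $\{r_M\leq \delta_1/2\}$ at all.
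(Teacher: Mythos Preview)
Your argument is correct and follows essentially the same strategy as the paper: both exploit that paths can be rerouted near the level set $\{r=\delta_1\}$ (respectively $\{r_M=\delta_1\}$) at cost $\varepsilon$, and then use the smooth convergence $\Psi^*g(t)\to g_Z$ on the exterior region. The paper packages this slightly differently by introducing the intrinsic distances $d_{g_Z,\delta_1}$ on $\{r\geq\delta_1\}$ and $d_{g(t),\delta_1}$ on $\{r_M\geq\delta_1\}$, proving $|d_{g_Z,\delta_1}-d_Z|<\varepsilon$ and $|d_{g(t),\delta_1}-d_{g(t)}|<\varepsilon$ via the small diameter of the boundary \emph{level set}, and $|d_{g_Z,\delta_1}-d_{g(t),\delta_1}\circ\Psi|<\varepsilon$ via smooth convergence; your direct path-surgery argument at the two scales $\delta_1$ and $\delta_1/2$ unpacks the same estimates.

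One remark: your invocation of Lemma~\ref{diam_est} to bound $\diam_{g(t)}(\{r_M\leq\delta_1\})$ is superfluous. Your own lower-bound argument only uses $\diam_{g_Z}(\{r\leq\delta_1\})<\varepsilon$ (to control $d_Z(\Psi^{-1}(p),\Psi^{-1}(q))$), not the $g(t)$-diameter of the inner region. The paper likewise never uses Lemma~\ref{diam_est} here; it only needs the small intrinsic diameter of the level set $\{r_M=\delta_1\}$, which follows from smooth convergence alone. So the distortion estimate is logically independent of the high-curvature diameter bound.
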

\begin{proof}
Take $\delta_1>0$ such that the intrinsic (hence also the extrinsic) diameter
\begin{equation}
\diam_{g_Z}(\{r=\delta_1\})<\varepsilon.\label{diam_d1}
\end{equation}
 By the uniform convergence away from $z_1$, as $t\rightarrow0$, it follows that 
 \begin{equation}
 \diam_{g(t)}(\{r_M=\delta_1\})<\varepsilon\label{diam_d2}
 \end{equation}
 for small $t$.

We will use $d_{g_Z,\delta_1}$ to denote the intrinsic metric in $\{r\geq\delta_1\}$ induced by $g_Z$, and similarly $d_{g(t),\delta_1}$ for the intrinsic metric in $\{r_M\geq\delta_1\}$ induced by $g(t)$.

By \eqref{diam_d1} and  \eqref{diam_d2}, it follows that for every $z_1,z_2\in \{r\geq \delta_1\}$
\begin{eqnarray}
| d_{g_Z,\delta_1}(z_1,z_2) - d_Z(z_1,z_2)| &<&\varepsilon,\label{dist1}\\
| d_{g(t),\delta_1}(\Psi(z_1),\Psi(z_2)) - d_{g(t)}(\Psi(z_1),\Psi(z_2))| &<&\varepsilon.\label{dist2}
\end{eqnarray}
To see this, note for instance that
$$ d_{g_Z}(z_1,z_2)\leq  d_{g_Z,\delta_1}(z_1,z_2) \leq d_{g_Z}(z_1,\{ r=\delta_1\})+d_{g_Z}(z_2,\{ r=\delta_1\})+\varepsilon.$$

Moreover, if
$$ d_{g_Z}(z_1,\{ r=\delta_1\})+d_{g_Z}(z_2,\{ r=\delta_1\}) > d_{g_Z}(z_1,z_2)$$
then $ d_{g_Z}(z_1,z_2)= d_{g_Z,\delta_1}(z_1,z_2)$. For, if $ d_{g_Z}(z_1,z_2)< d_{g_Z,\delta_1}(z_1,z_2)$, then there is a path connecting $z_1,z_2$ escaping $\{r\geq\delta_1\}$, hence
 $$ d_{g_Z}(z_1,z_2) > d_{g_Z}(z_1,\{ r=\delta_1\})+d_{g_Z}(z_2,\{ r=\delta_1\}),$$
 which is a contradiction. This proves \eqref{dist1}, and \eqref{dist2} is similar.
 
By the uniform convergence away from $z_1$, as $t\rightarrow0$, it also follows that for small $t$
\begin{equation}
|d_{g_Z,\delta_1}(z_1,z_2) - d_{g(t),\delta_1}(\Psi(z_1),\Psi(z_2)) |<\varepsilon,\label{dist3}
\end{equation}
uniformly for all $z_1,z_2\in \{r\geq \delta_1\}$. The result follows from the triangle inequality, combining \eqref{dist1}-\eqref{dist3}, having possibly made $\delta_1>0$ smaller in order to achieve $\diam(\{r\leq\delta_1\})<\varepsilon$.

\end{proof}
 
 \begin{lemma}[$\mathrm{Im \Psi}$ is an $\varepsilon$-net]\label{net}
 For every $\varepsilon>0$ and small enough $\delta_2,t_2>0$
 \begin{equation}
 \diam_{g(t)}(\{r_M\leq \delta_2\})<\varepsilon, \label{diam_c3}
 \end{equation}
 for every $t\in (0,t_2]$. 
 \end{lemma}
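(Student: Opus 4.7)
The plan is to split $\{r_M\leq\delta_2\}$ into an inner expander piece $\{r_M\leq\sqrt{\gamma t}\}$, handled directly by Lemma~\ref{diam_est}, and an outer conical annulus $\{\sqrt{\gamma t}\leq r_M\leq\delta_2\}$, on which I will obtain a pointwise comparison $\Psi^* g(t)\approx g_Z$ from the scale-invariant curvature bound \eqref{5.curv_bound3}. Note that $(\mathrm{Im}\,\Psi)^c\subset\{r_M=0\}$ is automatically contained in the inner region, so points outside the image of $\Psi$ require no separate treatment.

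Given $\varepsilon>0$, first apply Lemma~\ref{diam_est} with $\zeta$ chosen small; this furnishes $\gamma=C_{\riem}\zeta^{-1}$ and $t_1>0$ such that $\diam_{g(t)}(\{r_M\leq\sqrt{\gamma t}\})<\varepsilon/4$ for all $t\in(0,t_1]$. Next, for any $\tilde p\in Z\setminus\{z_1\}$ with $r_0:=r(\tilde p)$, the bound \eqref{5.curv_bound3} yields
\[
|\ric(g(\tau))|_{g(\tau)}(\Psi(\tilde p))\leq\frac{nC_{\riem}}{r_0^2}\quad\text{for all }\tau\in(0,t],
\]
since $r_M\circ\Psi=r\circ\phi^{-1}$ is time-independent. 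The standard Ricci-flow metric equivalence on $[t_0,t]$, combined with $t_0\searrow 0$ and the smooth local convergence $\Psi^* g(t_0)\to g_Z$ established in Section~\ref{smoothly_data}, then gives
\[
e^{-2nC_{\riem}t/r_0^2}\,g_Z|_{\tilde p}\leq(\Psi^* g(t))|_{\tilde p}\leq e^{2nC_{\riem}t/r_0^2}\,g_Z|_{\tilde p}.
\]
On $\{r\geq\sqrt{\gamma t}\}$ the exponent is bounded by $2n\zeta$ and thus arbitrarily small.

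For any $p=\Psi(\tilde p)$ with $\sqrt{\gamma t}\leq r(\tilde p)\leq\delta_2$, consider the radial curve $r\mapsto\phi(r,q_{\tilde p})$, $r\in[\sqrt{\gamma t},r(\tilde p)]$, where $q_{\tilde p}\in\mathbb S^{n-1}$ is the angular coordinate of $\tilde p$. This curve lies in $\{r\geq\sqrt{\gamma t}\}$; by \eqref{5.conical} its length in $g_Z$ is at most $(1+\kappa)\delta_2$, and the pointwise comparison above bounds its length in $g(t)$ by $2\delta_2$ after possibly shrinking $\zeta$ and $\kappa$. Hence every such $p$ is within $g(t)$-distance $2\delta_2$ of the level set $\{r_M=\sqrt{\gamma t}\}$. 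Combining this with Lemma~\ref{diam_est} via the triangle inequality gives $d_{g(t)}(p,q)\leq 4\delta_2+\varepsilon/4$ for all $p,q\in\{r_M\leq\delta_2\}$. Choosing $\delta_2<\varepsilon/8$ and $t_2=t_1$ then completes the proof.

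The principal obstacle will be justifying the pointwise metric comparison uniformly down to $r_M=\sqrt{\gamma t}$. This is made possible by the scale-invariant nature of \eqref{5.curv_bound3}, which involves the time-independent function $r_M$, so that the metric-equivalence exponent $2nC_{\riem}t/r_M^2$ remains of order $\zeta$ throughout the annular region rather than blowing up as $r_M\to 0$.
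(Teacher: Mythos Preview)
Your proof is correct, but it takes a more hands-on route than the paper's. The paper observes that the bound in Lemma~\ref{diam_est} reads $C_\zeta\sqrt{t}=C(g_N)\sqrt{\gamma t}$, so by choosing $\zeta=C_{\riem}t/\delta_2^{2}$ (equivalently $\sqrt{\gamma t}=\delta_2$) one arranges that the inner region $\{r_M\leq\sqrt{\gamma t}\}$ \emph{is} the entire set $\{r_M\leq\delta_2\}$, and Lemma~\ref{diam_est} alone gives $\diam_{g(t)}(\{r_M\leq\delta_2\})\leq C(g_N)\delta_2$. The only thing to check is that this $t$-dependent choice of $\zeta$ is admissible, i.e.\ $t\leq c_0\zeta$ and $\zeta$ small; the first reduces to $\delta_2^{2}\leq c_0 C_{\riem}$ and the second forces $t$ small once $\delta_2$ is fixed. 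Thus no outer annulus ever appears.

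Your approach instead fixes $\zeta$ once, uses Lemma~\ref{diam_est} only for the genuinely small core $\{r_M\leq\sqrt{\gamma t}\}$, and then bridges the annulus $\{\sqrt{\gamma t}\leq r_M\leq\delta_2\}$ by integrating the Ricci flow equation against the scale-invariant curvature bound~\eqref{5.curv_bound3} to get $\Psi^*g(t)\approx g_Z$, followed by a radial-curve length estimate. This is longer but perhaps more transparent: it makes explicit why the annulus contributes $O(\delta_2)$ to the diameter, rather than hiding that inside the $t$-dependent choice of $\zeta$. The paper's trick buys brevity; your decomposition buys a self-contained argument that does not require noticing the algebraic identity $C_\zeta\sqrt{t}=C(g_N)\sqrt{\gamma t}$.
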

 \begin{proof}
  Let $c_0$ be the constant given by Lemma \ref{diam_est}. Then, since  $\frac{c_0 C_{\riem}}{r_0^2}>1$ for small $r_0$, it follows that $t\in (0, \frac{c_0 C_{\riem}}{r_0^2} t]$ hence we can apply Lemma \ref{diam_est} for $\zeta=\frac{C_{\riem}t}{r_0^2}$ to obtain
 \begin{equation*}
 \diam_{g(t)}(\{r_M\leq r_0\}) \leq C(g_N) r_0, 
 \end{equation*}
 for small $t$, which proves the lemma.
\end{proof}

\subsection{Tangent flow at the conical point.} \label{tangent flow}Take any sequence of times $t_k\searrow 0$. It follows from the convergence \eqref{5.convergence} that there is a sequence $l_k$ such that for any non-negative index $j\leq k$
\begin{equation*}
t_k^{j/2}|(\nabla^{g})^{j}(  g- H_{l_k}^*  h_{l_k})|_{g}(t_k) < 1/k\;\; \mathrm{and}\;\;
s_{l_k}/t_k \rightarrow 0.
\end{equation*}
Let $\gamma_k=\gamma_3(1/k,k)$, $\Lambda_k=\Lambda_2(1/k,k)$ and $\lambda_k=\lambda_1(1/k,k)$ as given by Lemma \ref{close_to_expander} and set $\tau_k=\lambda_k(32 \gamma_k)^{-1}$. Passing to a subsequence if necessary, we may assume that $t_k < \tau_k$  and $s_{l_k}<s_2(1/k,k)$.

By Lemma \ref{close_to_expander}, there exist
\begin{equation*}
Q_k : \Big\{r_{l_k}\leq \sqrt{\gamma_k t_k +s_{l_k}(\Lambda_k+1)^2} \Big\}\rightarrow N,
\end{equation*} 
diffeomorphisms onto their image, such that for $j\leq k$
\begin{equation*}
(t_k +s_{l_k})^{j/2} \big|  (\nabla^{g_e(t_k+s_{l_k})})^j  ((Q_k^{-1})^* h_{l_k}(t_k) - g_e(t_k+s_{l_k})  )  \big|_{g_e(t_k+s_{l_k})} < 1/k
\end{equation*}
in $\mathrm{Im}\, Q_k$. Thus, setting $R_k=(Q_k\circ H_{l_k})^{-1}$,  we obtain
\begin{equation*}
t_k^{j/2} \big| (\nabla^{g_e(t_k + s_{l_k})})^j ( R_k^* g(t_k)- g_e(t_k+s_{l_k}) )  \big|_{g_e(t_k+s_{l_k})}  <C/k
\end{equation*}
in $\mathrm{Im}\, Q_k$, for large $k$. Moreover, since
\begin{equation*}
t_k^{-1} g_e(t_k+s_{l_k})=(1+\frac{s_{l_k}}{t_k})\varphi_{t_k+s_{l_k}}^* g_N,
\end{equation*}
we conclude that
\begin{equation*}
\big|  (\nabla^{g_N})^j  (  (R_k  \circ\varphi_{t_k+s_{l_k}}^{-1})^*  t_k^{-1} g(t_k) - (1+s_{l_k}/t_k) g_N )  \big|_{g_N} < C/k
\end{equation*}
in $\varphi_{t_k+s_{l_k}}(\mathrm{Im}\, Q_k)$.

Putting $G_k=  (R_k\circ\varphi_{t_k+s_{l_k}}^{-1} )^* t_k^{-1}g(t_k) $, the estimate above becomes
\begin{equation}
\big|(\nabla^{g_N})^j G_k-(1+s_{l_k}/t_k)g_N\big|_{g_N} <C / k \label{conv_to_exp}
\end{equation}
in $\mathrm{Im}(\varphi_{t_k+s_{l_k}}\circ R_k^{-1})=\varphi_{t_k+s_{l_k}}(\mathrm{Im}\, Q_k)$.

Then, since by Lemma \ref{close_to_expander} and Remark \ref{lambda_Gamma}
\begin{equation*}
\Big \{\mathbf r_{l_k}\leq \frac{1}{2}\sqrt{\gamma_k t_k +s_{l_k} (\Lambda_k+1)^2}\Big\}\subset \Big \{\mathbf r_{l_k}\leq \sqrt{\gamma_k t_k }\Big\} \subset \mathrm{Im}\, Q_k,
\end{equation*}
it follows that
\begin{equation}\label{domain}
\begin{aligned}
\varphi_{t_k+s_{l_k}}(\mathrm{Im}\,Q_k)&\supset \varphi_{t_k+s_{l_k}} \Big(\Big\{ \mathbf r_{l_k} 
\leq \frac{1}{2}\sqrt{\gamma_k t_k +s_{l_k}(\Lambda_k+1)^2} \Big\} \Big)\\
&=\varphi_{1+ t_k/s_{l_k}} \Big(\Big\{ \mathbf r\leq \frac{1}{2}\sqrt{\gamma_k t_k/s_{l_k}+ (\Lambda_k+1)^2}\Big\}\Big)\\
&\supset \big\{\mathbf r \leq \sqrt{\gamma_k/8 } \big\},
\end{aligned}
\end{equation}
where the last inclusion follows from Lemma \ref{exp}.

Now, let $q_k\in M$ be such that $q_{max}=\varphi_{t_k+s_{l_k}}\circ R_k^{-1}(q_k)\in N$ satisfies 
$$|\riem(g_N)(q_{max})|_{g_N}=\max_N |\riem(g_N)|_{g_N}.$$
Applying Lemma \ref{diam_est} for $\zeta=\frac{1}{2}\max_N |\riem(g_N)|_{g_N}$ we obtain $\hat C,\hat\gamma>1$ such that
$$q_k\in \big\{r_M\leq \sqrt{\hat\gamma t_k}\big\},$$
and $\diam_{g(t_k)}(\{r_M\leq \sqrt{\hat\gamma t_k}\}) \leq \hat C \sqrt{t_k}$. 

 Given any $p_k\not\in \mathrm{Im}\,\Psi$, it follows that $r_M(p_k)=0$, hence $\dist_{g(t_k)}(p_k,q_k)\leq \hat C\sqrt{t_k}$. Therefore, 
 $\dist_{g_N} (q_{max}, \varphi_{t_k+s_{l_k}}\circ R_k^{-1}(p_k)) \leq 2 \hat C$, for large $k$.
 
 This, together with \eqref{conv_to_exp}, \eqref{domain} and that $\gamma_k \rightarrow +\infty$ suffices to prove that $(M,t_k^{-1}g(t_k),p_k)$ converges in the smooth pointed Cheeger--Gromov topology to $(N,g_N, \bar q)$. 
 
 This implies that $(M,t_k^{-1}g(t_k t), p_k)_{t\in (0,t_k^{-1}T]} \rightarrow (N,h(t),\bar q)_{t\in(0,+\infty)}$  in the smooth pointed Cheeger--Gromov topology, where $(N,h(t))$ is complete with bounded curvature and $h(1)=g_N$. By the forward and backward uniqueness property of the Ricci flow \cite{ChenZhu06,Kotschwar10} it follows that $h(t)=g_e(t)$.
 
 \subsection{Proof of Theorem \ref{perturbation_thm}} \label{final proofs}

\begin{proof}[Proof of Theorem \ref{perturbation_thm}]
Let $g_{c,Z}=dr^2+r^2 g_1$ be the cone that models the singularity at $z_1$ and $g_{c,exp}=dr^2+r^2 g_1'$ be a cone with  $\riem(g_1')\geq 1$. 

Let $\varepsilon_{link}$ and $\kappa$ be small constants (to be determined in the course of the proof)  such that for $0\leq j \leq 4$,
\begin{equation}
|(\nabla^{g_1})^j (g_1' - g_1)|_{g_1} < \varepsilon_{link},
\end{equation}
on $\mathbb S^{n-1}$, and  $k_Z(r)<\kappa$ for $r\in (0,1]$.

Moreover, let $(N,g_N,f)$ be the expander given by Lemma \ref{lem:expander_pos}, asymptotic to $g_{c,exp}$.

The proof is again similar to the proof of Theorem \ref{stexist}, so we only describe the necessary changes. The approximating sequence $(M_s, G_s)$ is defined as in Subsection \ref{approx}, gluing the expander $(N,g_N,f)$. Then, in Subsection \ref{almost_cone} equation \eqref{error} becomes
\begin{equation}\label{error2}
\begin{aligned}
(&\Phi_s\circ F_s)^* G_s - g_{c,exp}=\\
&=\xi_3(r_s/s^{1/4})(F_s^*g_e(s)- g_{c,exp})+(1-\xi_3(r_s/s^{1/4}))(\phi^* g_Z - g_{c,Z})\\
&\qquad + (1-\xi_3(r_s/s^{1/4}))(g_{c,Z}-g_{c,exp}).
\end{aligned}
\end{equation}
Recall  $\eta_0(g_N)$ given by Theorem \ref{main_thm}. It follows by \eqref{error2} that we may choose $\kappa$, $\varepsilon_{link}$ small and $\Lambda_1$ large (depending on $\eta_0$), such that $(M_s,G_s)\in \mathcal M(\eta_0,\Lambda_1,s)$ for small $s$.

Then, Subsections \ref{limit}-\ref{smoothly_data} carry over unchanged, providing a Ricci flow $(M,g(t))_{t\in (0,T]}$ and a map $\Psi:Z \setminus \{z_1\} \rightarrow M$ such that $\Psi^* g(t)$ converges to $g_Z$ smoothly uniformly away from $z_1$, as  $t\rightarrow 0$.

Now, although Lemma \ref{close_to_expander} is no longer valid, its conclusion does hold for $\varepsilon=0.01$, by the proof of part (1) of Theorem \ref{main_thm}. It follows that Lemma \ref{diam_est} also holds for $(M,g(t))$, hence Subsection \ref{GH} carries over, proving that $g(t)$ converges to $g_Z$ in the Gromov--Hausdorff sense as $t\rightarrow 0$.

\end{proof}

\section{Orbifold quotient expanders and Theorem \ref{orbifold_thm}}\label{orbifold expanders}
  We consider $\mathbb{S}^{n-1}\subset \R^{n}$ and $\Gamma \subset O(n)$ a finite subgroup, acting freely and properly discontinuously on $\mathbb{S}^{n-1}$. Let $\bar{g}$ be a metric on $\mathbb{S}^{n-1}$ with $\text{Rm}(\bar{g})\geq 1$,  but $\text{Rm}(\bar{g}) \not\equiv 1$, which is invariant under the action of $\Gamma$ and thus descends to a metric $g$ on the quotient $\mathbb{S}^{n-1}/\Gamma$. Note that the action of $\Gamma$ thus naturally extends to an isometric action on the cone $(C(\mathbb S^{n-1}), dr^2+r^2\bar{g})$. \\[1ex]
 Let $(N,g_N,f)$ be the unique non-negatively curved gradient Ricci expander $(N,g_N,f)$ given by \cite{Deruelle16}, which is asymptotic to $(C(\mathbb S^{n-1}), dr^2+r^2\bar{g})$, where we assume that $f$ is normalised as in Section \ref{prelim}. By the soliton equation \eqref{soliton_eqn} if follows that $f$ is strictly convex. Let $p_0 \in N$ be the unique point where $f$ attains its minimum, or equivalently $\nabla f(p_0) = 0$. Then, all the level sets $\{f = a\}$ for $a>\min f$ are diffeomorphic to $\mathbb{S}^{n-1}$, and the flow $J_\tau$ of $\nabla f/|\nabla f|^2$ yields natural diffeomorphisms between them. Thus, we may extend the coordinate system at infinity $F$ of Section \ref{prelim} to a diffeomorphism 
  $$F:(0,+\infty)\times \mathbb S^{n-1}\rightarrow N\setminus \{p_0\}, $$ 
 given by $F(r,q)= J_{\frac{r^2 -\Lambda_0^2}{4}} (F(\Lambda_0, q))$.
\\[1ex]
Let us now assume that $\Gamma$ also acts isometrically on $(N,g_N,f)$ and fixes $f$. This implies that the action of $\Gamma$ has to preserve the flow lines of the vector field $\nabla f/|\nabla f|^2$ and thus the action of $\Gamma$ is completely determined by the action on a level set $\{f =a\}$ for $a>\min f$. We will call such an action compatible with the action on $(\mathbb S^{n-1},g)$ if it agrees with the action on the cone $(C(\mathbb S^{n-1}), dr^2+r^2\bar{g})$. In other words,  we call the action of $\Gamma$ compatible if $\gamma \cdot F(r,q) = F(r, \gamma \cdot q)$ for all $\gamma \in \Gamma$. Note that thus the action of $\Gamma$ on the cone uniquely determines the action on $(N,g_N,f)$.

Now, let $\mathcal O$ be an non-compact orbifold with exactly one singular point $p\in \mathcal O$. Then, there is a neighbourhood $U$ of $p$, a neighbourhood  $0\in \tilde U\subset \mathbb R^n$, and a projection $\pi: \tilde U \rightarrow U$ that is invariant under the fixed point free action of a finite subgroup $\Gamma'$ of $O(n)$.

A smooth function $f$ on $\mathcal O$ is a continuous function, smooth on $\mathcal O\setminus \{p\}$, with the property that $\pi^* f$ is smooth. Similarly, a smooth orbifold Riemannian metric $g_{\mathcal O}$ on $\mathcal O$ is a Riemannian metric on $\mathcal O\setminus \{p\}$ with the property that $\pi^* g$ extends smoothly along $0\in \mathbb R^n$.

Since the action of any element of $\Gamma'$ preserves both $\pi^* g$ and $\pi^* f$ it follows that $\nabla^{\pi^* g_{\mathcal O}}\pi^* f$ is a fixed point of the induced action on $T_0\mathbb R^n$. But, since the action is free of fixed points we conclude that $\nabla f|_p = 0$, in the sense that $\nabla^{\pi^* g_{\mathcal O}} \pi^* f|_0=0$.

We call a triple $(\mathcal O,g_{\mathcal O},f)$ an orbifold expander, where $\mathcal O$, $g_{\mathcal O}$ and $f$ are as above, if $\hess_{g_{\mathcal O}} f=\ric(g_{\mathcal O})+\frac{g_{\mathcal O}}{2}$ on $\mathcal O\setminus \{p\}$.

\begin{lemma}\label{quotient}
Let $(\mathcal O,g_{\mathcal O},f)$ be an orbifold expander with positive curvature operator that is asymptotic to the cone $(C(S^{n-1}/\Gamma), dr^2 + r^2 g)$. Suppose that $(S^{n-1}/\Gamma, g)$ is the quotient of $(S^{n-1}, \bar g)$, with $\riem(\bar g)\geq 1$. Then there is a manifold expander $( N,  g_N,  \bar f)$ with positive curvature operator that is asymptotic to the cone $(C(S^{n-1}), dr^2 + r^2  \bar g)$ such that $(\mathcal O,g_{\mathcal O})=(N,g_N) / \Gamma$. It follows that the singularity of the expander is modelled on $\mathbb R^n / \Gamma$.
\end{lemma}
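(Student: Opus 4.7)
The plan is to construct $N$ as the orbifold universal cover of $\mathcal O$, exploiting strict convexity of $f$ to obtain a global cylindrical structure and then extending smoothly across the single lifted point. First I would use the soliton identity together with positivity of the curvature operator to deduce
\begin{equation*}
\hess_{g_{\mathcal O}} f = \ric(g_{\mathcal O}) + g_{\mathcal O}/2 \geq g_{\mathcal O}/2 > 0
\end{equation*}
on the smooth part $\mathcal O \setminus \{p\}$, so that $f$ is strictly convex there. Pulling back through the orbifold chart $\pi : \tilde U \to U$, the function $\pi^*f$ is $\Gamma'$-invariant and smooth on $\tilde U \subset \mathbb R^n$; since $\Gamma'$ acts freely on $\mathbb S^{n-1}$, $\nabla(\pi^*f)|_0$ must vanish, and strict convexity forces $0$ to be the unique, non-degenerate minimum of $\pi^*f$. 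Combined with the asymptotic behaviour $f=r^2/4$ at infinity, this shows $p$ is the only critical point of $f$ on $\mathcal O$.

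Next I would use the flow $J_\tau$ of $\nabla f/|\nabla f|^2$, which is now globally defined on $\mathcal O \setminus \{p\}$ and identifies all level sets of $f$. Since the level sets near infinity are diffeomorphic to $\mathbb S^{n-1}/\Gamma$ by the asymptotic cone assumption, so is every level set, and $\mathcal O \setminus \{p\}$ is diffeomorphic to $(\min f, \infty) \times \mathbb S^{n-1}/\Gamma$. In particular $\pi_1(\mathcal O \setminus \{p\}) \cong \Gamma$, using $n \geq 3$ so that $\mathbb S^{n-1}$ is simply connected. On the other hand, a punctured neighbourhood of $p$ is diffeomorphic to $(\mathbb R^n \setminus \{0\})/\Gamma'$, giving $\pi_1 \cong \Gamma'$; comparing these two descriptions yields $\Gamma \cong \Gamma'$, and since both are realised as finite subgroups of $O(n)$ acting on the respective tangent frames, they are conjugate inside $O(n)$.

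I would then define $\pi_N : N_0 \to \mathcal O \setminus \{p\}$ to be the Riemannian universal cover with the pullback metric $g_{N_0} = \pi_N^* g_{\mathcal O}$ and deck group $\Gamma$. Over the orbifold chart $U$, the restriction of $\pi_N$ is isometrically the standard quotient $\tilde U \setminus \{0\} \to (\tilde U \setminus \{0\})/\Gamma' = U \setminus \{p\}$; adding one lifted point $p_0$ above $p$, the smoothness of $\pi^* g_{\mathcal O}$ across $0 \in \tilde U$ guarantees that $g_{N_0}$ extends smoothly to a metric $g_N$ on $N = N_0 \cup \{p_0\}$. Setting $\bar f = \pi_N^* f$, the soliton equation, positivity of the curvature operator, and the cone-asymptotic structure all descend from $\mathcal O$: since $\Gamma$ acts freely on $\mathbb S^{n-1}$, the asymptotic cone of $N$ lifts to the universal cover $(C(\mathbb S^{n-1}), dr^2 + r^2\bar g)$ of $(C(\mathbb S^{n-1}/\Gamma), dr^2 + r^2 g)$. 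By construction $(\mathcal O, g_{\mathcal O}) = (N, g_N)/\Gamma$ and the singularity of the expander is modelled on $\mathbb R^n/\Gamma$.

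The main obstacle I expect is the smooth extension across $p_0$: one must check that the cover over $U \setminus \{p\}$ is connected (which relies on $\mathbb S^{n-1}$ being simply connected for $n \geq 3$) and that the two realisations of the deck group as a subgroup of $O(n)$, one coming from the orbifold chart at $p$ and the other from the asymptotic cone at infinity, truly agree up to conjugation. This latter comparison is effected by $J_\tau$, which produces an explicit diffeomorphism between a small link $\mathbb S^{n-1}/\Gamma'$ around $p$ and an asymptotic link $\mathbb S^{n-1}/\Gamma$, lifting to the identification we need on universal covers.
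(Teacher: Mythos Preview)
Your approach is correct and closely parallels the paper's proof. Both arguments use strict convexity of $f$ (from $\hess_{g_{\mathcal O}} f = \ric(g_{\mathcal O}) + \tfrac12 g_{\mathcal O} > 0$) together with the flow $J_\tau$ of $\nabla f/|\nabla f|^2$ to identify $\mathcal O\setminus\{p\}$ diffeomorphically with a cylinder $(0,\infty)\times \mathbb S^{n-1}/\Gamma$. The only real difference is in how the tip is filled in: the paper replaces $f$ by a function $\tilde f$ equal to $\tfrac14\, d_{g_{\mathcal O}}(p,\cdot)^2$ near $p$ (and equal to $f$ outside a compact set), so that the flow of $\nabla\tilde f/|\nabla\tilde f|^2$ extends smoothly through the origin in $\pi^*g_{\mathcal O}$-exponential coordinates and directly exhibits an orbifold diffeomorphism $\mathbb R^n/\Gamma \to \mathcal O$; you instead pass to the Riemannian universal cover of $\mathcal O\setminus\{p\}$ and glue in a single point using the orbifold chart. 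Your packaging via $\pi_1$ is a bit more abstract and makes the hypothesis $n\geq 3$ (needed for $\mathbb S^{n-1}$ to be simply connected) explicit, while the paper's construction is more hands-on but leaves this implicit. One minor imprecision: the step ``$\Gamma\cong\Gamma'$ abstractly, hence conjugate in $O(n)$'' is not valid as stated---isomorphic finite subgroups of $O(n)$ need not be conjugate---but as you yourself observe in the final paragraph, what the argument actually requires is only that the universal cover restricted over $U\setminus\{p\}$ coincides with the orbifold-chart cover $\tilde U\setminus\{0\}$, and this follows because both are universal covers of the same space; that is enough for the smooth extension of $g_N$ across $p_0$.
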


\begin{proof}
It suffices to show that $\mathcal O$ is diffeomorphic to $\mathbb R^n/ \Gamma$. By $\riem(g_{\mathcal O})> 0$ we obtain that $\hess_{g_\mathcal O} f \geq \frac{g_{\mathcal O}}{2}$, hence $\nabla f\not = 0$ on $\mathcal O\setminus \{p\}$. Thus the coordinate system at infinity can be extended to a surjective map
\begin{equation*}
F:(0,+\infty) \times S^{n-1}/\Gamma \rightarrow N\setminus \{p\}. 
\end{equation*}
As in the manifold case, we may assume that this map is related to the flow $J_\tau$ of $\nabla f/|\nabla f|^2$ by
\begin{equation*}
F(r,q)=J_{\frac{r^2-r_0^2}{4}} ( F(r_0,q)),
 \end{equation*} 
 for some $r_0>0$.
 
Observe that $F$ can be deformed to a map $\tilde F:(0,+\infty) \times S^{n-1}/\Gamma \rightarrow \mathcal O\setminus \{p\}$, which extends to a diffeomorphism between $\mathbb R^n/\Gamma$ and $\mathcal O$. To see this, let $\tilde f$ be a smooth function equal to $d_{g_{\mathcal O}}(p,\cdot)^2/4$ near $p$ and to $f$ outside a compact set. Since $\hess_{g_{\mathcal O}} f\geq \frac{g_{\mathcal O}}{2}$ we can arrange so that $\nabla \tilde f \not =0$ in $\mathcal O\setminus \{p\}$.

Now, let $\tilde J_\tau$ be the flow of the field $\nabla \tilde f/|\nabla \tilde f|^2$ and define $\tilde F$ by
\begin{equation*}
\tilde F(r,q)= \tilde J_{\frac{r^2-r_0^2}{4}} (F(r_0,q)).
\end{equation*}
Working on $\pi^* g$ - exponential coordinates  around $\pi^{-1}(p)$ we see that $\tilde F$ is indeed a diffeomorphism.
\end{proof}

\begin{theorem} \label{thm:quotient_expander} Given $(\mathbb{S}^n, \bar{g})$ as above, the action of $\Gamma$ extends to a compatible isometric action on the unique positively curved gradient Ricci expander $(N,g_N,f)$ that is asymptotic to the cone $(C(\mathbb S^{n-1}),dr^2+r^2\bar{g})$. The action fixes $f$ and the only fixed point on $N$ is the critical point $p_0$ of $f$. 
Thus, the quotient space is an expander with exactly one orbifold singularity modelled on $\mathbb R^n/\Gamma$ and is asymptotic to the cone $(C(\mathbb S^{n-1}/\Gamma), dr^2+r^2  g)$.
\end{theorem}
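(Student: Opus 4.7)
The strategy is to use the uniqueness of the positively curved gradient Ricci expander asymptotic to the cone $(C(\mathbb S^{n-1}),dr^2+r^2\bar g)$, provided by \cite{Deruelle16} and recorded in Lemma \ref{lem:expander_pos}, to promote each $\gamma\in\Gamma$ to an isometry of $(N,g_N)$.  Fix $\gamma\in \Gamma$ and introduce a new asymptotic identification $F_\gamma:[\Lambda_0,\infty)\times \mathbb S^{n-1} \to N$ by $F_\gamma(r,q):=F(r,\gamma\cdot q)$.  Since $\gamma$ is an isometry of $\bar g$, the triple $(N,g_N,f)$ is again asymptotic to $(C(\mathbb S^{n-1}),dr^2+r^2\bar g)$ via $F_\gamma$ with the same decay function $k_{exp}$.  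Applying the uniqueness part of \cite{Deruelle16} to the two identifications $F$ and $F_\gamma$ produces an isometry $\Phi_\gamma:(N,g_N)\to(N,g_N)$ asymptotically intertwining them, so that $\Phi_\gamma\circ F(r,q)=F(r,\gamma\cdot q)$ for $r$ sufficiently large.  Moreover $\Phi_\gamma^* f=f$: the pullback $\Phi_\gamma^* f$ is again a potential for $g_N$ satisfying the normalisation $|\nabla h|^2=h+R_{inf}-R$ and $(\Phi_\gamma^* f)(F(r,q))=r^2/4$ on the end, so the uniqueness of the normalised potential in Lemma \ref{lem:expander_pos} forces $\Phi_\gamma^* f=f$ globally.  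The assignment $\gamma\mapsto\Phi_\gamma$ is a homomorphism, because $\Phi_{\gamma_1}\circ\Phi_{\gamma_2}$ and $\Phi_{\gamma_1\gamma_2}$ are both isometries of $g_N$ inducing the action of $\gamma_1\gamma_2$ on the end, whence uniqueness forces equality.

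Next I analyse fixed points.  Since $\riem(g_N)>0$ by Lemma \ref{lem:expander_pos}, the soliton equation gives $\hess_{g_N} f\geq\tfrac{1}{2}g_N$, so $f$ is strictly convex and has a unique critical point $p_0$.  The identification $F$ extends to a diffeomorphism $F:(0,\infty)\times\mathbb S^{n-1}\to N\setminus\{p_0\}$ via the flow $J_\tau$ of $\nabla f/|\nabla f|^2$, as described in the paragraph preceding the theorem.  Because $\Phi_\gamma$ preserves both $g_N$ and $f$, it preserves $\nabla f/|\nabla f|^2$ and hence commutes with $J_\tau$, so the relation $\Phi_\gamma\circ F(r,q)=F(r,\gamma\cdot q)$ extends from the asymptotic region to all of $N\setminus\{p_0\}$.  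Thus if $p=F(r,q)\in N\setminus\{p_0\}$ were fixed by $\Phi_\gamma$ for some $\gamma\neq e$, then $F(r,\gamma q)=F(r,q)$, forcing $\gamma q=q$, which contradicts that $\Gamma$ acts freely on $\mathbb S^{n-1}$.  Hence $p_0$ is the only fixed point, and $\Gamma$ acts freely on $N\setminus\{p_0\}$.

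It follows that $\mathcal O:=N/\Gamma$ carries a smooth Riemannian metric away from $[p_0]$, and via the $\Gamma$-equivariant $g_N$-exponential map at $p_0$ a neighbourhood of $[p_0]$ is modelled on $\mathbb R^n/\Gamma'$, where $\Gamma'\subset O(n)$ is the isotropy representation of $\Gamma$ at $p_0$.  The representation $\Gamma'$ is faithful (since the global action is), and by tracking the asymptotic equivariance $\Phi_\gamma\circ F(r,q)=F(r,\gamma\cdot q)$ inwards along the gradient flow of $f$, $\Gamma'$ is conjugate in $O(n)$ to the original embedding $\Gamma\subset O(n)$.  The $\Gamma$-invariant data $(g_N,f)$ then descend to $(\mathcal O,g_{\mathcal O},f)$, giving the asserted orbifold expander with positive curvature operator, modelled at $[p_0]$ on $\mathbb R^n/\Gamma$ and asymptotic to $(C(\mathbb S^{n-1}/\Gamma),dr^2+r^2 g)$.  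The main technical delicacy is the uniqueness input used to construct $\Phi_\gamma$: one needs not merely uniqueness of the expander up to isometry, but uniqueness together with an isometry intertwining the two asymptotic identifications, which should be extractable from the proof in \cite{Deruelle16}; the identification of the isotropy representation at $p_0$ with $\Gamma\subset O(n)$ up to conjugation in $O(n)$ is then a consequence of the equivariance of the gradient flow.
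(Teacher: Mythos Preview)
Your approach is genuinely different from the paper's and, modulo the caveat you flag, is a valid alternative route. The paper does not invoke global uniqueness directly; instead it runs an open--closed continuity argument along the path $(\bar g_t)_{t\in[0,1]}$ obtained by volume-normalised Ricci flow from $\bar g$ to a round metric. At $t=1$ the expander is a rotationally symmetric Bryant soliton, where the compatible $\Gamma$-action is immediate; openness uses Deruelle's \emph{local} uniqueness from the implicit function theorem, together with a rather careful construction of compatible asymptotic charts for nearby $t'$; closedness uses compactness of orbifold expanders combined with Lemma~\ref{quotient}. Your argument --- invoking global uniqueness once per $\gamma$ --- is shorter and more conceptual.

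The caveat you flag is real and more than cosmetic. Deruelle's uniqueness is stated as ``unique up to diffeomorphism''; applied to the same manifold $(N,g_N,f)$ with the two charts $F,F_\gamma$, this a priori yields only \emph{some} self-isometry, which (since any self-isometry preserves $f$ and hence induces an isometry of the link via the gradient flow) acts on the link as $\gamma\sigma$ for some $\sigma\in\mathrm{Isom}(\mathbb S^{n-1},\bar g)$ rather than as $\gamma$ itself, so $\gamma\mapsto\Phi_\gamma$ need not be a homomorphism. To close this one must check that the isometry witnessing uniqueness can be taken to act trivially on the asymptotic chart --- reasonable, since Deruelle works in a fixed gauge at infinity, but it requires going into his proof rather than citing the statement. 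The paper's continuity argument sidesteps this entirely. It also identifies the isotropy representation at $p_0$ with the original $\Gamma\subset O(n)$ by rigidity of finite-group representations along the path (recorded in the remark following the theorem); your ``tracking the equivariance inwards along the gradient flow'' shows only conjugacy in $\mathrm{Diff}(\mathbb S^{n-1})$, not in $O(n)$, since the gradient-flow identification of small level spheres of $f$ with the link at infinity need not be orthogonal.
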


\begin{proof}
We aim to extend Deruelle's proof \cite{Deruelle16} of existence and uniqueness of positively curved gradient expanders to show that the action of $\Gamma$ on the link extends to a compatible, properly discontinuous action on the expander with the claimed properties.  

As in Deruelle, let $(\bar{g}_t)_{0\leq t \leq 1}$ be the (reparametrised) evolution of $\bar{g}$ by volume preserving Ricci flow, such that $\bar{g}_0 = \bar{g}$ and $\bar{g}_1 = \alpha g_{\text{round}}$, where $\alpha = (\text{vol}(\mathbb{S}^{n-1},\bar{g}))^{2/n}$. Since Ricci flow preserves symmetries, $\bar{g}_t$ is invariant under $\Gamma$ for all $t \in [0,1]$. 

Let $(N_t, \tilde g_t, f_t)$ be the unique, positively curved gradient expander asymptotic to the cone $(C(\mathbb S^{n-1}), dr^2+r^2\bar{g}_t)$ obtained by Deruelle. Then, let $p_{0,t} \in N_t$ be the unique point where $\nabla f_t(p_{0,t}) = 0$. 

Note that $(N_1,\tilde g_1,f_1)$ is one of the rotationally symmetric expanders constructed by Bryant (see \cite{CCG}). In this case the action of $\Gamma$ naturally extends to a compatible and properly discontinuous isometric action on $N_1$ which preserves $f_1$ and has only one fixed point $p_{0,1}$.

We want to use an open-closed argument to show that this is true for all $t\in [0,1]$. \\[1ex]
 Recall that $(N_t,\tilde g_t,f_t)$ satisfies the conclusion of the theorem if the following holds: there is an isometric action of $\Gamma$ on $N_t$ with one fixed point,  preserving $f_t$, and the action is compatible with the standard action of $\Gamma$ on the link $(\mathbb{S}^{n-1},\bar{g}_t)$. Note that since the action of $\Gamma$ preserves the level sets of $f_t$, the fixed point has to be $p_{0,t}$. \\[1ex]
{\bf Openness.} Suppose that $(N_t,\tilde g_t,f_t)$ satisfies the conclusion of the theorem. Let $g_{c,t}=dr^2+r^2 \bar g_t$ and $F_t:(0,+\infty)\times \mathbb S^{n-1}\rightarrow N_t$ be the associated coordinate system at infinity, satisfying\

 $$ r^j | (\nabla^{g_{c,t}})^j (F_t^* \tilde g_t-g_{c,t})|_{g_{c,t}} = O(r^{-2}).$$

Then the local uniqueness given in \cite[Theorem 3.7]{Deruelle16} yields an isometric action of $\Gamma$  onto $N_{t'}$, for $t'$ close to $t$. Moreover, there is a diffeomorphism between $N_t$ and $N_{t'}$ identifying this action with the action on $N_t$, so from now on we will work on $N:=N_t$ and  assume that $\tilde g_t, \tilde g_{t'}, f_{t}, f_{t'}$ are defined on $N$. 

This action has a unique fixed point, it preserves $f_t$ by assumption and by the uniqueness statement of Lemma \ref{lem:expander_pos} it follows that it also preserves $f_{t'}$.  We conclude that the fixed point of the action is the critical point $p_{0}$ of  both $f_t$ and $f_{t'}$.

By Theorem \cite[Theorem 3.7]{Deruelle16}, it follows that
\begin{equation}\label{decay1}
r^j | (\nabla^{g_{c,t'}})^j (F_t^* \tilde g_{t'}-g_{c,t'})|_{g_{c,t'}} = O(r^{-2}).
\end{equation}
Observe, however, that the coordinate system $F_t$ is not adapted to the gradient soliton structure of $(N,\tilde g_{t'},f_{t'})$, namely it does not parametrise the level sets of $f_{t'}$. Thus, although the action on $(N_t,\tilde g_t,f_t)$ is compatible to the standard action of $\Gamma$ on $\mathbb S^{n-1}$, it is not immediate that the action on $(N_{t'},\tilde g_{t'},f_{t'})$ is also compatible to the standard action.

For this, we need to construct a diffeomorphism 
$$F_{t'}:[r_0,+\infty)\times \mathbb S^{n-1} \rightarrow \{f_{t'} \geq \frac{r_0^2}{4}\}$$ such that
\begin{enumerate}
\item $f_{t'}(F_{t'}(r,q))=\frac{r^2}{4}$,
\item $r^j | ( \nabla^{g_{c,t'}})^j (F_{t'}^* \tilde g_{t'} - g_{c,t'}  )   |_{g_{c,t'}} = O(r^{-2})$, for all integers $j\geq0$, 
\item $\gamma \cdot F_{t'}(r,q)= F_{t'}(r,\gamma \cdot q)$ where the action on $q$ is the standard action of $\Gamma$ on $\mathbb S^{n-1}$.
\end{enumerate}

Denote by $J_\tau$ the flow of the vector field $\nabla^{\tilde g_t} f_t / |\nabla^{\tilde g_t} f_t|^2$  and by $J'_\tau$ the flow of $\nabla^{\tilde g_{t'}} f_{t'} / |\nabla^{\tilde g_{t'}} f_{t'}|^2$. Since the action leaves both vector fields invariant, it follows that both $J_\tau$ and $J'_\tau$ are equivariant with respect to this action. 

Now fix a large number $r_0>0$. Then,  given any $a\geq \frac{r_0^2}{4}$, define on $\{f_{t} = a\}$ and $\{f_{t'}=a\}$ the Riemannian metrics  
$$(\tilde g_{t'})_{1,\rho} = \rho^{-2}  (J_{\frac{\rho^2}{4}-a})^* \tilde g_{t'} \; \mathrm{and} \;
(\tilde g_{t'})_{2,\rho} = \rho^{-2}  (J'_{\frac{\rho^2}{4}-a})^* \tilde g_{t'}, $$
respectively, for any $\rho\geq r_0$. Here, abusing notation we use $\tilde g_{t'}$ to also denote the restriction of $\tilde g_{t'}$ to the tangent bundle of $\{f_t=\frac{\rho^2}{4}\}$ and $\{f_{t'}=\frac{\rho^2}{4}\}$ respectively.

Note that, from \eqref{decay1}, it follows that 
\begin{equation}\label{conv1}
  (\nabla^{\bar g_{t'}})^j \big( F_t(\rho,\cdot)^* (\tilde g_{t'})_{1,\rho} - \bar g_{t'} \big) = O(\rho^{-2}),
\end{equation}
and from the estimates in \cite[Theorem 3.2]{Deruelle14} 
\begin{equation}\label{conv2}
 (\nabla^{h_a})^j \big( (\tilde g_{t'})_{2,\rho} - h_a\big) = O(\rho^{-2}),
\end{equation}
for some metric $h_a$ on $\{f_{t'}=a\}$, uniformly in $a$.

Moreover, note that
\begin{equation}\label{grad_to_one}
 |\nabla^{\tilde g_{t'}} 2\sqrt{f_{t'}}|_{\tilde g_{t'}}^2=\frac{|\nabla^{\tilde g_{t'}} f_{t'} |^2}{ f_{t'}}=\frac{f_{t'}+R_{min}-R}{f_{t'}}=1+O(f_{t'}^{-1}).
 \end{equation}
 
Now, we claim that  the level set $\{f_{t'}=a\}$ is a graph over $\{f_t=a\}$ via the normal exponential map of $(4a)^{-1}\tilde g_{t'}$,  for each $a\geq \frac{r_0^2}{4}$, if $r_0$ is large. Moreover, the graphing function smoothly converges to zero as $a\rightarrow +\infty$. 
 
To see this, first observe that, as $a\rightarrow +\infty$, any pointed sequence 
$$((4a)^{-1}F_t^* \tilde g_{t'}, x_a), $$ 
with $f_t(x_a)=a$, has a subsequence converging to $(g_{c,t'}, x_\infty)$, with $r(x_\infty)=1$,   in $C^\infty_{loc}$, by \eqref{decay1}. Moreover, under this convergence  $F_t^*(2\sqrt{f_t}/ 2\sqrt a)$ converges to the radial function $r$ of the cone $C(\mathbb S^{n-1})$.

Since
 $$\hess_{(4a)^{-1}\tilde g_{t'}} f_{t'}/a=\ric((4a)^{-1}\tilde g_{t'})+(4a)^{-1}\tilde g_{t'}/2,$$ the curvature decay $\sup_N r^{2+j} |(\nabla^{\tilde g_{t'}})^j \riem|_{\tilde g_{t'}} <+\infty$, implies uniform derivative estimates for $f_{t'}/a$ with respect to $(4a)^{-1}\tilde g_{t'}$ and within bounded distance from $\{f_{t'}=a\}$. Thus, passing to a subsequence $2\sqrt{f_{t'}} /2\sqrt a$ converges smoothly to a limit $r_\infty$ as $a\rightarrow +\infty$, which satisfies $|\nabla^{g_{c,t'}} r_\infty |_{g_{c,t'}}\equiv 1$ due to \eqref{grad_to_one}. Moreover,  $r_\infty\rightarrow 0$ as $r\rightarrow 0$, hence $r_\infty=r=\dist_{g_{c,t'}}(o,\cdot)$, $o$ denoting the tip of the cone. This suffices to prove the claim, since it implies that the level sets of $(4a)^{-1} f_t$ and $(4a)^{-1}f_{t'}$ smoothly converge to each other under this convergence. Note that here we used that the normal exponential map of $(4a)^{-1}\tilde g_{t'}$ over $\{f_t/a=1/4\}$ smoothly converges to the normal exponential map of $g_{c,t'}$ over $\{r=1\}$.
 
 Thus, there is a diffeomorphism, defined via the normal exponential map,  
$$K_a: \{f_{t}=a\} \rightarrow \{f_{t'}=a\},$$ 
satisfying $\gamma\cdot K_a (x) = K_a(\gamma\cdot x)$ for every $x\in \{f_{t}=a\}$ and $\gamma\in \Gamma$.

 Now, as the level sets converge to each other smoothly after scaling, we obtain that 
\begin{equation}
(\nabla^{ (\tilde g_{t'})_{1,2\sqrt a}  })^j (K_a^*(\tilde g_{t'})_{2,\rho} - (\tilde g_{t'})_{1,2\sqrt a} ) \rightarrow 0
\end{equation} 
as $ \frac{\rho^2}{4}\geq a\rightarrow +\infty$, where we also used \eqref{conv2}. Using \eqref{conv1} we obtain
\begin{equation}
(\nabla^{\bar g_{t'}})^j ( F_t(2\sqrt a,\cdot)^*   K_a^* (\tilde g_{t'})_{2,\rho} - \bar g_{t'} ) \rightarrow 0, \label{conv3}
\end{equation} 
as $\frac{\rho^2}{4}\geq a\rightarrow +\infty$.

Define the family of maps given by 
$$F_{t',a}(r,q)= J'_{\frac{r^2}{4}-a}\circ K_a \circ F_t( 2\sqrt a, q) .$$
Observe that the $F_{t',a}$ are equivariant, in the sense that
$$\gamma\cdot F_{t',a}(r,q)=  F_{t'}(r,\gamma\cdot q), $$
for all $\gamma\in \Gamma$ and $q\in\mathbb S^{n-1}$.

Then, we can write 
$$F_{t',a}^* \tilde g_{t'}=F_{t',a}^* (|\nabla^{\tilde g_{t'}} 2\sqrt{f_{t'}}|_{\tilde g_{t'}}^2) dr^2 + r^2  F_t(2\sqrt a,\cdot)^*  K_a^* (\tilde g_{t'})_{2,r}.$$
By \eqref{conv3}, $F_t^* K_a^* (\tilde g_{t'} )_{2,r}$ converges smoothly to $\bar g_{t'}$ as $\frac{r^2}{4}\geq a \rightarrow +\infty$.

Thus,   $F_{t',a}^* \tilde g_{t'}$ is  $C^\infty$-controlled in terms of the metric $dr^2 + r^2 \bar g_{t'}$, uniformly in $a$. Moreover, by \eqref{grad_to_one} 
$$\{f_{t'}\leq \frac{r_1^2}{8}\}\subset F_{t',a}(\{r\leq r_1\}) \subset \{f_{t'}\leq 2 r_1^2\},$$ for $r_1\geq r_0$. 

Taking $a\rightarrow +\infty$, by Arzel\`a--Ascoli a subsequence of $F_{t',a}$ converges to a limit $F_{t'}$. 

Since $F_{t',a}(2\sqrt{b+s},\cdot)=J'_{s}\circ F_{t',a}(2\sqrt b,\cdot)$, it follows that $$F_{t'}(2\sqrt{b+s},\cdot)=J'_{s}\circ F_{t'}(2\sqrt b,\cdot),$$
which implies that requirement (1) above is satisfied. Clearly, (3) is also satisfied since $F_{t',a}$ are equivariant.
 
 Moreover, \eqref{conv3} implies that
$$\lim_{a\rightarrow +\infty} \lim_{r\rightarrow +\infty} F_t^*\circ K_a^* (\tilde g_{t'} )_{2,r}=\bar g_{t'}.$$
This, combined with the estimates of \cite[Theorem 3.2]{Deruelle14} prove (2).

{\bf Closedness.} Let $t_i \rightarrow \bar{t} \in [0,1]$ and assume that $(N_{t_i},\tilde g_{t_i},f_{t_i})$ satisfy the conclusion of the theorem. Consider the sequence of the quotient orbifold expanders $(\mathcal O_i=N_{t_i}/\Gamma, \tilde g_{t_i}, f_{t_i}, p_{0,t_i})$.  Note that for simplicity we use the same notation to denote the metrics and soliton functions in the quotient space. These orbifold expanders have a unique singular point, since the actions of $\Gamma$ on $N_{t_i}$ have a unique fixed point.

The compactness theorem in \cite{Deruelle14} carries over to the orbifold setting, using \cite{Lu01}, to obtain a pointed Cheeger-Gromov limit $(\mathcal O_{\bar t}, \tilde g_{\bar t}, f_{\bar t}, p_{0,\bar t})$, which is an orbifold expander with positive curvature operator. Moreover, $p_{0,\bar t}$ is the unique singular point and the orbifold expander is asymptotic to the cone $(C(\mathbb S^{n-1}/\Gamma), dr^2 + r^2\bar g_{\bar t}/\Gamma)$.

By Lemma \ref{quotient} it follows that there is $(N_{\bar t}, \tilde g_{\bar t}, f_{\bar t}, p_{0,\bar t})$ such that 
$$(\mathcal O_{\bar t}, \tilde g_{\bar t}, f_{\bar t}, p_{0,\bar t})=(N_{\bar t}, \tilde g_{\bar t}, f_{\bar t}, p_{0,\bar t})/\Gamma,$$
and the action on $N_{\bar t}$ is compatible with the standard action of $\Gamma$ on $\mathbb S^{n-1}$.

\end{proof}
 
\begin{remark} We note that the positively curved gradient expander with one isolated orbifold singularity, asymptotic to $(C(\mathbb S^{n-1}/\Gamma), dr^2+r^2g)$ is unique. To see this, note that by Lemma \ref{quotient} the orbifold expander has to be the quotient of a smooth, positively curved expander asymptotic to $(C(\mathbb S^{n-1}), dr^2+r^2\bar{g})$ under the action of $\Gamma$, with a unique fixed point.
\end{remark} 
\begin{remark}
Using the fact that $\Gamma$ has finite characteristic variety, it is possible to employ the continuity argument above to prove the following stronger statement: if $p_0\in N$ is the critical point of the soliton function then there exists an orthonormal basis of $T_{p_0} N$ such that the orthogonal action on $T_{p_0} N$ that is  induced by the isometric action on $N$ is represented by the standard action of $\Gamma$ on $\mathbb R^n$.
\end{remark}

\begin{proof}[Proof of Theorem \ref{orbifold_thm}] The proof is similar to the proof of Theorem \ref{stexist}, so we only describe the necessary changes. For ease of notation, we assume again that there is only one isolated conical singularity at $z_1$. Let $(C(\mathbb{S}^{n-1}/\Gamma), dr^2+r^2g_1)$ be the cone that models the singularity at $z_1$. We denote with $\bar{g}_1$ the lift of $g_1$ to $\mathbb{S}^{n-1}$. Since $(Z,g_Z)$ is asymptotic to $(C(\mathbb{S}^{n-1}/\Gamma), dr^2+r^2g_1)$ , there exists a smooth metric $\bar{g}_Z$ on $(B_1(0)\setminus \{0\}) \subset \R^n$, which is invariant under the natural action of $\Gamma$, such that there is a quotient map $\pi: B_1(0) \rightarrow U$ where $U$ is a neighbourhood of $z_1$ in $Z$ and $\bar{g}_Z = \pi^*g_Z$. Note that this implies that $\bar{g}_Z$ is asymptotic to the cone $(C(\mathbb{S}^{n-1}), dr^2+r^2\bar{g}_1)$ at $0$.

Let $(N,\bar{g}_N,f)$ be the expander given by Lemma \ref{lem:expander_pos}, asymptotic to the cone $(C(\mathbb{S}^{n-1}), dr^2+r^2\bar{g}_1)$. By Theorem \ref{thm:quotient_expander} the action of $\Gamma$ extends to $(N,g_N,f)$. As in Subsection  \ref{approx} we can glue in the orbifold quotient of this expander around $z_0$ into $g_Z$ to obtain an approximating sequence $(M_s,G_s)$ with one orbifold singularity. We can furthermore assume that under $\pi$ this lifts to a corresponding local glueing $(B_1(0), \bar{G}_s)$ of $(N,\bar{g}_N,f)$ into $\bar{g}_Z$. 

By short-time existence for the orbifold Ricci flow, see for example \cite[Section 5.2]{KleinerLott14}, we obtain a solution $(g_s(t))_{t \in [0,T_s]}$ to Ricci flow with an isolated orbifold singularity, starting at $g_s(0)=G_s$. We can arrange this in such a way that the flow lifts under $\pi$ to a smooth Ricci flow $(h_s(t))_{t \in [0,T]}$ on $B_1(0)$, starting at  $\bar{G}_s$. 

Now, all the estimates in Subsections \ref{almost_cone} - \ref{tangent flow} are local, and we can thus apply them to the family $(h_s(t))_{t \in [0,T]}$. Note also that the conclusion of Theorem \ref{main_thm}  holds for $(B_1(0),h_s(t))$. Although $(B_1(0),h_s(t))$ is not complete, all the arguments in the proof of that theorem go through, provided we apply the pseudolocality theorem for orbifolds from \cite{Wang10} to $(M_s,g_s(t))$, to obtain the necessary curvature estimates in the conical region.

Projecting under $\pi$ we obtain the corresponding estimates for $(g_s(t))_{t \in [0,T]}$. In particular, as in Corollary \ref{corollary}, we obtain a uniform existence time $T$ for $g_s(t)$ and the curvature bound  
$$\max_{M_s} |\riem(g_s(t))|_{g_s(t)}\leq C/t.$$
Thus, by the compactness theorem for orbifold Ricci flow in \cite{Lu01}, there exists a limit Ricci flow $(g(t))_{t \in [0,T]}$ with an isolated orbifold singularity and the claimed properties.
\end{proof}


\providecommand{\bysame}{\leavevmode\hbox to3em{\hrulefill}\thinspace}

\end{document}